\documentclass[a4paper]{article}
\usepackage{amssymb,amsmath,amsfonts,mathptmx}
\usepackage{graphicx}
 \graphicspath{{./pics/},{./picsLemma1/}}
 \DeclareGraphicsExtensions{.pdf}
\usepackage{stmaryrd}
\usepackage{authblk}
\usepackage{amsthm}
\usepackage{geometry}
 \geometry{
 a4paper,
 total={170mm,257mm},
 left=20mm,
 top=20mm,
 }
\usepackage{tikz}
  \usetikzlibrary{arrows,automata,shapes}
 
\newtheorem{theorem}{Theorem}
\newtheorem{lemma}[theorem]{Lemma}
\newtheorem{proposition}[theorem]{Proposition}
\newtheorem{assumption}[theorem]{Assumption}

\theoremstyle{definition}

\newtheorem{example}{Example}

\newcommand{\eps}{\varepsilon}
\newcommand{\supc}{\mbox{$\sup {\rm C}$}}
\newcommand{\supcn}{\mbox{$\sup {\rm CN}$}}

\newcommand{\R}{\mathbf{R}}

\providecommand{\keywords}[1]{\textbf{\textit{Index terms---}} #1}

\overfullrule 5pt
\widowpenalty=10000
\clubpenalty =10000
\interlinepenalty=10

\begin{document}

\title{Computation of Controllable and Coobservable Sublanguages in Decentralized Supervisory Control via Communication}

\author[*]{Jan Komenda}
\author[**]{Tom{\' a}{\v s}~Masopust}

\affil[*]{Institute of Mathematics, Czech Academy of Sciences, {\v Z}i{\v z}kova 22, 616 62 Brno, Czech Republic, {\tt komenda{@}ipm.cz}}

\affil[**]{Institute of Theoretical Computer Science and Center of Advancing Electronics Dresden (cfaed), TU Dresden, Germany, {\tt tomas.masopust{@}tu-dresden.de} }

\date{}

\maketitle

\begin{abstract}
  In decentralized supervisory control, several local supervisors cooperate to accomplish a common goal (specification). Controllability and coobservability are the key conditions to achieve a specification in the controlled system. We construct a controllable and coobservable sublanguage of the specification by using additional communications between supervisors. Namely, we extend observable events of local supervisors via communication and apply a fully decentralized computation of local supervisors. Coobservability is then guaranteed by construction. Sufficient conditions to achieve the centralized optimal solution are discussed. Our approach can be used for both prefix-closed and non-prefix-closed specifications.
  
  \keywords{Discrete-event systems \and Decentralized supervisory control \and Coobservability \and Separability \and Communication}
\end{abstract}

\section{Introduction}
  Supervisory control theory of discrete-event systems (DES) modeled by finite automata was introduced by Ramadge and Wonham~\cite{RW87}. It aims to guarantee that the control specification consisting of safety and/or nonblockingness is satisfied in the controlled system. Supervisory control is realized by a supervisor that runs in parallel with the system and imposes the specification by disabling some of the controllable events in a feedback manner.

  Decentralized supervisory control was developed by Rudie and Wonham~\cite{RW92}. It is based on the idea to distribute the actuator and sensor capabilities among several local supervisors. Each supervisor issues a control decision based on its own observation of the system.
  The global control action is then given by a fusion rule on the local control actions.
  
  To give an example, consider a traffic system of a region with many elements (crossroads, tunnels), traffic lights and information boards. One of the goals may be to make the traffic fluent and prevent traffic jams in case of unexpected circumstances. The elements are observed (automatically or by humans) to provide information about the situation. One element may be observed by several supervisors corresponding to, e.g., different directions. The overall system is modeled as a single system including all dependences between the elements. The control decision is made based on all observations. Thus, although there are local supervisors that do not observe any problem, they need to react based on the observations of other supervisors, for instance, to close all the streets accessing a blocked tunnel.
  
  There is an important motivation for decentralized supervisory control of DES that do not a priori have a modular structure. It is well known that the abstraction of timed automata into region (zone) automata does not preserve the modular structure. Similarly, it is to be expected that the discretization of a hybrid system does not preserve its modular structure. Then the original structure of a hybrid system is lost and we have to face the decentralized supervisory control problem instead of the modular supervisory control problem.
  
  There are many different control policies based on two elementary ones: {\em conjunctive and permissive\/} (C\,\&\,P) and {\em disjunctive and antipermissive\/} (D\,\&\,A). For any decentralized control architecture, a corresponding notion of {\em coobservability\/} was proposed, which together with {\em controllability} form the necessary and sufficient conditions to achieve a specification by the controlled system.
  Nowadays, there are advanced architectures, such as an architecture with conditional decision (inferencing)~\cite{RickerR07,YL04} or multi-level inferencing~\cite{KT07,TKU08}. A general approach consisting in several decentralized supervisory control architectures running in parallel was proposed by Chakib and Khoumsi~\cite{CK11}. 

  Another approach to ensure coobservability is to extend locally observable events via communication among local supervisors. There exist decentralized control problems that cannot be solved without communication. Decentralized control with communicating supervisors, where an occurrence of transitions visible to one supervisor can be communicated to other supervisors, has been studied~\cite{BL00,RR00,RudieLL03}. 
    
  So far, almost all results available in the literature are only existential. There exist a few papers providing constructive results to compute a controllable and coobservable sublanguage of a specification, but the problem is in general computationally difficult. The existence of local supervisors enforcing the safety specification is decidable if nonblockingness is not required (e.g., for prefix-closed languages). However, if the marked language of the controlled system has to be included in the specification so that the controlled system is nonblocking, the existence of such local supervisors is undecidable~\cite{Th05,Tr04}.

  We focus on the computation of a controllable and coobservable sublanguage by using additional communications among the supervisors. Our study is restricted to the original (C\,\&\,P) architecture \cite{RW92,YLL02} and motivated by the relationship between decentralized and modular supervisory control and their key concepts: C\,\&\,P coobservability and separability. This relationship is investigated in Komenda et al.~\cite{KMP06}, where the decentralized framework is plugged into the modular framework. The approach is based on the concurrent (separable) over-approximation of the plant. In decentralized control, there is no assumption on the structure of the plant. Therefore, both the system and the specification are replaced with their infimal separable superlanguages. However, in the likely case the specification fails to be separable, one only computes a solution for this new specification, which often fails to be included in the specification. Otherwise stated, it is assumed that the constructed sublanguage is included in the specification~\cite{KMP06}.

  This is the point, where our approach comes into the picture. We overcome the problem of an inseparable specification by making it separable via communication (using the notion of {\em conditional decomposability}~\cite{KMvS12cd}). The approach of Komenda et al.~\cite{KMP06} further requires {\em mutual controllability\/} to ensure coobservability of the separable over-approximation of the specification. This can be omitted in our approach, although we show that it is useful to ensure the centralized optimality.
  
  Decomposable over-approximations were also considered by Jiang and Kumar~\cite{JKumar00}, where decomposability of the specification is an additional assumption, whereas in our approach, it is enforced by the construction via additional communications.
  
  In this paper, we construct a controllable and coobservable sublanguage of a specification with respect to possibly extended local observations (enriched by communication). The idea is different from the computation of purely coobservable sublanguages in the literature. First, we extend the set of locally observable events via communication of events observable by other supervisors. To compute such an extension, we use the technique for conditional decomposability~\cite{KMvS12cd}. Then we use the extended alphabets to compute the local supervisors in a fully decentralized way. 
  We show that our construction guarantees coobservability (Theorem~\ref{main}) as soon as the local supervisors are nonconflicting (in particular if they are prefix-closed). We then state two sufficient conditions under which the solution coincides with the centralized optimal solution. One approach uses mutual controllability (Theorem~\ref{optfoc}), which is a condition considering the structure of the plant, and the other the observer and LCC conditions (Theorem~\ref{thm_inc_opt_sol}), which rather considers the structure of the supervisors.
  Since our approach is not restricted to prefix-closed languages, we further show how to handle the case of conflicting supervisors (Theorem~\ref{thm22}). The complexity of our approach is briefly discussed in Section~\ref{complexity}. We further show that separability is PSPACE-complete (Theorem~\ref{thm6}), which generalizes a result on the complexity of decomposability. 
  
  Although our approach is different from the existing approaches, the rough idea is analogous to that used for decentralized synthesis without communication~\cite{KozakW95,Takai98}. The approach of Koz{\' a}k and Wonham~\cite{KozakW95} is similar to ours in that it computes fully decentralized solutions that guarantee coobservability. It essentially projects the centralized supervisor, whereas we compute local supervisors and coobservability is guaranteed by construction (from separability that is granted by distributed computation). The main difference is that we do not project the centralized supervisor, but rather the plant and the specification. 
  Also, the condition of Takai~\cite{Takai98} under which the fully decentralized supervisors achieve the centralized optimal solution (the centralized optimal supervisor must be observable with respect to all locally observable alphabets) is different from our condition that relies on structural properties of the plant: such as observer and local control consistency or mutual controllability and observability of projections of the plant.
  
  Our approach is also related to the topic on sensor selection~\cite{JiangKG03,RohloffKK06,WangGLL11,WangLL08}, where there is a maximal observable event set and the sensors can be turned on and off. In our approach, we have static observations, rather than dynamic, and communicate them among supervisors, where needed. Our work can be extended to dynamic observations in the future.

  This work is an extended version of the conference paper~\cite{allerton2013}. We extend our approach to non-prefixed-closed languages and to partial observations, compare it to the centralized optimal solution, and include a discussion on the complexity of related problems.

\section{Preliminaries}\label{sec:cc}
  We assume that the reader is familiar with decentralized supervisory control of discrete-event systems~\cite{CL08,Won12}.
  Let $A$ be a finite nonempty set (an {\em alphabet}), and let $A^*$ denote the set of all finite words over $A$; the empty word is denoted by $\eps$. A {\em language\/} over $A$ is a subset of $A^*$. The {\em prefix closure\/} of a language $L$ over $A$ is the set $\overline{L}=\{w\in A^* \mid \text{there exists } u \in A^* \text{ such that } wu\in L\}$. A language $L$ is {\em prefix-closed\/} if $L=\overline{L}$.

  A {\em generator\/} is a quintuple $G=(Q,A,f,q_0,Q_m)$, where $Q$ is a finite set of {\em states}, $A$ is an alphabet (of {\em events}), $f\colon Q \times A \to Q$ is a {\em partial transition function}, $q_0 \in Q$ is the {\em initial state}, and $Q_m\subseteq Q$ is a set of {\em marked states}. The transition function $f$ can be extended to the domain $Q \times A^*$ in the usual way. The language {\em generated\/} by $G$ is the set $L(G) = \{s\in A^* \mid f(q_0,s)\in Q\}$ and the language {\em marked\/} by $G$ is the set $L_m(G) =\{s\in A^* \mid f(q_0,s)\in Q_m\}$.
  The paper is restricted to {\em regular languages}, that is, languages marked by a generator.

  A {\em projection\/} $P_o\colon A^* \to A_o^*$ is a morphism defined by $P_o(a) = \eps$, if $a\in A\setminus A_o$, and $P_o(a)= a$, if $a\in A_o$. It is extended (as a morphism for concatenation) from events to words by induction. The {\em inverse image} of $P_o$ is defined as $P_o^{-1}(a)=\{s\in A^* \mid P_o(s) = a\}$. These definitions can naturally be extended to languages. For two alphabets $A_x,A_y\subseteq A$, we use the notation $P_{y}^{x}$ to denote a projection from $A_x^*$ to $A_y^*$ and we write simply $P_y$ if $A_x = A$.
  
  A {\em synchronous product\/} of languages $L_i\subseteq A_i^*$, for $i=1,\ldots,n$, is defined as $\bigparallel_{i=1}^n L_i=\bigcap_{i=1}^n P_i^{-1}(L_i) \subseteq A^*=\left(\bigcup_{i=1}^n A_i\right)^*$, where $P_i\colon A^*\to A_i^*$ is a projection. Languages $L_i$ are {\em synchronously nonconflicting\/} if $\overline{\|_{i=1}^{n} L_i} = \|_{i=1}^{n} \overline{L_i}$. 

  Let $L$ be a prefix-closed language over an alphabet $A$. A language $K\subseteq L$ is {\em observable\/} with respect to $L$ and projection $P_{o}\colon A^* \to A_{o}^*$ if, for all $s\in \overline{K}$ and $a \in A_c$, $sa \notin \overline{K}$ and $sa \in L$ implies that $P_{o}^{-1}P_{o}(s)\{a\} \cap \overline{K} = \emptyset$. Language $K$ is {\em normal\/} with respect to $L$ and projection $P_{o}$ if $\overline{K} = P_{o}^{-1} P_{o}(\overline{K}) \cap L$.

\subsection{Decentralized Supervisory Control}
  A {\em controlled generator\/} over an alphabet $A$ is a structure $(G,(A_{c,i})_{i=1}^{n},(A_{o,i})_{i=1}^{n})$, where $G$ is a generator over $A$, $A_{c,i}\subseteq A$ are sets of {\em locally controllable events}, and $A_{o,i} \subseteq A$ are sets of {\em locally observable events}. Let $A_c = \bigcup_{i=1}^n A_{c,i}$ denote the set of controllable events, $A_o = \bigcup_{i=1}^n A_{o,i}$ the set of observable events, $A_{uc}=A\setminus A_{c}$ the set of uncontrollable events, and $A_{uo}=A\setminus A_{o}$ the set of unobservable events. Projections to locally observable events $A_{o,i}$ are denoted by $P_{o,i}\colon A^* \to A_{o,i}^*$. 

  Let $\Gamma_i=\{\gamma\subseteq A \mid \gamma\supseteq (A\setminus A_{c,i})\}$ be a set of local control patterns. A supervisor $S_i$ is a mapping $S_i\colon P_{o,i}(L(G))\to \Gamma_i$, where $S_i(s)$ is the set of locally enabled events if $S_i$ observes $s\in A_{o,i}^*$. The global control law $S$ is the conjunction of local supervisors $S_i$ given by $S(w) =\bigcap_{i=1}^n S_i(P_{o,i}(w))$ for $w\in A^*$.
  The {\em closed-loop system\/} is the smallest language $L(S/G)$ such that $\eps \in L(S/G)$ and if $s \in L(S/G)$, $sa\in L(G)$ and $a \in S(s)$, then $sa \in L(S/G)$. 
  Control objectives of decentralized control are defined using a specification language $K$. Let $L_m(S/G) = L(S/G)\cap K$. If $\overline{L_m(S/G)}=L(S/G)$, the closed-loop system is called {\em nonblocking}. The goal of decentralized control is to find supervisors $(S_i)_{i=1}^{n}$ such that $L_m(S/G)=K$ and $\overline{L_m(S/G)}=\overline{K}$.

  Necessary and sufficient conditions to achieve a specification by a joint action of local supervisors are controllability and coobservability~\cite{RW92}.
  Let $L$ be a prefix-closed language over $A$.
  A language $K\subseteq L$ is {\em controllable\/} with respect to $L$ and the set of uncontrollable events $A_{uc}$ if $\overline{K}A_{uc}\cap L\subseteq \overline{K}$. 
  A language $K\subseteq L$ is {\em coobservable\/} with respect to $L$ and the sets of locally observable events $(A_{o,i})_{i=1}^{n}$ if for all $s\in \overline{K}$, $a\in A_c$, and $sa \in L\setminus \overline{K}$, there exists $i\in \{1,2,\ldots,n\}$ such that $a\in A_{c,i}$ and $(P_{o,i}^{-1}(P_{o,i}(s))\{a\}\cap \overline{K}=\emptyset$.
  Intuitively, if, after a word $s$ from the specification, the extension by an event $a$ is illegal (it does not exist in the specification but exists in the plant), then there must exist at least one local supervisor $S_i$ that can issue the decision ``disable the event $a$''. 

  The control law of local supervisors associated to the C\,\&\,P architecture is called permissive, since the default action is to enable an event whenever a supervisor has an ambiguity what to do with it. Specifically, the control law of supervisor $S_i$ on $s$ is defined as $S_i(s)=(A\setminus A_{c,i}) \cup \{ a\in A_{c,i} \mid \text{there exists } s'\in K \text{ with } P_{o,i}(s')=P_{o,i}(s) \text{ and } s'a\in K\}$.
  With the permissive local policy, we always achieve all words in the specification. The concern is then {\em safety}, expressed by coobservability.
  
  Let $X\subseteq A$ be an alphabet. In the rest of the paper, we use the convention to define the set of uncontrollable events of $X$ as $X_{uc} = X \cap A_{uc}$. Similarly, we define the set of controllable events of $X$ as $X_c=X\cap A_c$, the set of observable events of $X$ as $X_{o}=X\cap A_o$, and the set of unobservable events of $X$ as $X_{uo}=X\cap A_{uo}$.

\section{Main Idea of our Approach}
  We now present the main idea of our approach to compute a controllable and coobservable sublanguage of a specification language using communication and results of modular supervisory control.
  
  Let $(G,(A_{c,i})_{i=1}^{n},(A_{o,i})_{i=1}^{n})$ be a controlled generator over an alphabet $A$. For simplicity, we denote $L=L(G)$. Let $K\subseteq L$ be a specification language over $A$. If the local supervisors do not observe all events of $A$, that is, $A_{uo}$ is nonempty, we consider an arbitrary decomposition of $A_{uo}$ into (not necessarily disjoint) local sets $A_{uo,i}$, such that the union of all $A_{uo,i}$ results in $A_{uo}$. The alphabet $B_i$ of supervisor $S_i$ contains all events from $A_{o,i} \cup A_{uo,i}$ and may be further extended with other events via communication. The union of all the alphabets $B_i$ results in $A$.
  In Section~\ref{sec:separability}, we suggest a procedure how to obtain such a decomposition of $A_{uo}$. Namely, for every alphabet $A_{o,i}$, we make use of the procedure {\sc Rcd} defined on page~\pageref{RCDpage} below that computes an extension alphabet $\Sigma_i \subseteq A$. This extension can be decomposed into observable and unobservable events $\Sigma_{o,i}$ and $\Sigma_{uo,i}$ with respect to global observable and unobservable alphabets $A_o$ and $A_{uo}$, respectively. The alphabet $B_i$ of supervisor $S_i$ is then the union of alphabets $A_{o,i}$, $\Sigma_{o,i}$, and $\Sigma_{uo,i}$. The events of the alphabet $A_{o,i} \cup \Sigma_{o,i}$ are the events observed by supervisor $S_i$ extended with communications.
  
  The idea of our approach is to compute local languages (supervisors) $\R_i$ over the alphabets $B_i$ such that their synchronous product $\R = \|_{i=1}^{n} \R_i$ is a sublanguage of $K$ controllable and coobservable with respect to $L$. Although there are well-known conditions on local languages in modular supervisory control that ensure that their synchronous product is controllable, cf. Lemma~\ref{feng} in the appendix, conditions on local languages that ensure coobservability of their synchronous product are not known. We now identify two such sufficient conditions in Theorem~\ref{newthm}.
  
  \begin{theorem}\label{newthm}
    Let $L$ be a prefixed-closed language over $B = \bigcup_{i=1}^{n} B_i$ and assume that $B_{o,i}\cap B_c\subseteq B_{c,i}$, for $i=1,\ldots,n$. Let $M \subseteq L$ be a language such that $\overline{M} = \|_{i=1}^{n} \overline{M_i}$, where $M_i$ is a language over $B_i$. If
    \begin{enumerate}
      \item either $M_i$ is normal with respect to $P_i(L)$ and $P^i_{o,i}$, for all $i=1,\ldots,n$,
      \item or $B_c \subseteq B_o$ and $M_i $ is observable with respect to $P_i(L)$ and $P^i_{o,i}$, for all $i=1,\ldots,n$,
    \end{enumerate}
    then $M$ is coobservable with respect to $L$ and $(B_{o,i})_{i=1}^{n}$.
  \end{theorem}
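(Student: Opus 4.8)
The plan is to verify the definition of coobservability directly, using the hypothesis $\overline{M} = \|_{i=1}^{n}\overline{M_i}$ to pick out a single local component $M_i$ responsible for disabling the violating event. So fix $s \in \overline{M}$, $a \in B_c$, and suppose $sa \in L \setminus \overline{M}$. Since $a \in B_c$, there is some index $i$ with $a \in B_{c,i}$. I would like this $i$ to be a witness for coobservability, i.e.\ to show $P_{o,i}^{-1}(P_{o,i}(s))\{a\} \cap \overline{M} = \emptyset$. The natural choice of $i$ is one for which the local component already ``knows'' that $a$ is forbidden: because $\overline{M} = \bigcap_{j} (P_j^B)^{-1}(\overline{M_j})$ and $sa \notin \overline{M}$ while $s \in \overline{M}$, there must be at least one index $j$ such that $P_j^B(s) \in \overline{M_j}$ but $P_j^B(sa) \notin \overline{M_j}$; since $P_j^B(sa) = P_j^B(s)$ whenever $a \notin B_j$, such a $j$ must have $a \in B_j$. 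The first subtlety is to argue that one can choose this $j$ so that additionally $a \in B_{c,j}$ — this is where the assumption $B_{o,i} \cap B_c \subseteq B_{c,i}$ enters: if $a \in B_j \cap B_c$ and $a$ is observable (which we will ensure, see below), then $a \in B_{o,j} \cap B_c \subseteq B_{c,j}$. I would take $i = j$ and fix notation $t := P_i^B(s)$, so $t \in \overline{M_i}$, $ta \notin \overline{M_i}$, and $ta \in P_i^B(L) \subseteq P_i(L)$.

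The core of the argument is then local. Take any $w \in P_{o,i}^{-1}(P_{o,i}(s))\{a\}$, i.e.\ $w = s'a$ with $P_{o,i}(s') = P_{o,i}(s)$; I must show $w \notin \overline{M}$, equivalently $P_i^B(s'a) \notin \overline{M_i}$ (it suffices to violate membership in one component). Write $u := P_i^B(s')$. The key observation is that $P_i^B$ and $P_{o,i}$ are compatible in the sense that $P^i_{o,i}(P_i^B(s')) = P_{o,i}(s')$ (projecting to $B_i$ then to $B_{o,i}$ equals projecting straight to $B_{o,i} = A_{o,i} \cup \Sigma_{o,i} \subseteq B_i$), so $P^i_{o,i}(u) = P_{o,i}(s') = P_{o,i}(s) = P^i_{o,i}(t)$. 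Thus $u$ and $t$ have the same image under the local observation projection $P^i_{o,i}$. Now in case (1), $M_i$ is normal with respect to $P_i(L)$ and $P^i_{o,i}$, so $\overline{M_i} = (P^i_{o,i})^{-1} P^i_{o,i}(\overline{M_i}) \cap P_i(L)$; I would apply this to conclude that if $ua \in \overline{M_i}$ then, since $P^i_{o,i}(ua) = P^i_{o,i}(u)a = P^i_{o,i}(t)a = P^i_{o,i}(ta)$ and $ta \notin \overline{M_i}$ with $ta \in P_i(L)$, normality forces $ta \in \overline{M_i}$ (because $ta$ has the same observable image as $ua \in \overline{M_i}$ and lies in $P_i(L)$) — a contradiction. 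Hence $ua = P_i^B(w) \notin \overline{M_i}$, so $w \notin \overline{M}$, which is exactly what coobservability requires. In case (2) one instead invokes observability of $M_i$: here $B_c \subseteq B_o$ guarantees $a$ is observable, so the earlier index-selection step goes through, and observability of $M_i$ with respect to $P_i(L)$ and $P^i_{o,i}$ says precisely that $t \in \overline{M_i}$, $ta \in P_i(L) \setminus \overline{M_i}$, $a$ controllable, forces $(P^i_{o,i})^{-1}P^i_{o,i}(t)\{a\} \cap \overline{M_i} = \emptyset$; since $u \in (P^i_{o,i})^{-1}P^i_{o,i}(t)$, we get $ua \notin \overline{M_i}$ and again $w \notin \overline{M}$.

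The step I expect to be the main obstacle is the index selection together with the controllability bookkeeping on $a$. In case (1) there is no global assumption $B_c \subseteq B_o$, so $a$ need not be observable; one must be careful that the index $j$ exhibiting the discrepancy $P_j^B(s) \in \overline{M_j} \not\ni P_j^B(sa)$ can be chosen to satisfy $a \in B_{c,j}$, and that the witness index demanded by the definition of coobservability (which requires $a \in A_{c,i}$ and uses the original $P_{o,i}$, not $P^i_{o,i}$) lines up with the index used in the local normality argument. I would handle this by noting $a \in B_c = A_c$ means $a \in A_{c,k}$ for some $k$; one then has to check that the local component $M_k$ is also the one that excludes $sa$ — which follows because $a \in A_{c,k} \subseteq B_k$ forces $P_k^B(sa) \neq P_k^B(s)$, and then normality/observability of $M_k$ combined with $sa \notin \overline{M}$ pins down that it is $M_k$'s constraint that is violated. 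The remaining details — that $P_i^B(L)$-membership of $ta$ follows from $sa \in L$, and that the various projection identities hold — are routine morphism manipulations.
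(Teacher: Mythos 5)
Your overall strategy is sound and, read contrapositively, coincides with the paper's: since $sa\notin\overline{M}=\bigcap_{j} P_j^{-1}(\overline{M_j})$ while $s\in\overline{M}$, some index $j$ has $P_j(s)\in\overline{M_j}$ and $P_j(sa)\notin\overline{M_j}$, and you then show this $j$ is a coobservability witness via local observability (case 2) or normality (case 1). That second half of your argument --- the identity $P^j_{o,j}\circ P_j = P_{o,j}$ and the application of observability/normality to $t=P_j(s)$ and $u=P_j(s')$ --- is correct and matches the paper's treatment of its ``$a\in B_{c,i}$'' case.

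The gap is exactly where you predicted it, and your proposed repair does not work. In case (1), to conclude that the excluding index $j$ satisfies $a\in B_{c,j}$, you must rule out $a\in B_j\setminus B_{c,j}$, which (via $B_{o,j}\cap B_c\subseteq B_{c,j}$ and $a\in B_c$) reduces to the possibility $a\in B_{uo,j}$. Your fix --- ``$a\in A_{c,k}\subseteq B_k$ forces $P_k(sa)\neq P_k(s)$, and then normality pins down that it is $M_k$'s constraint that is violated'' --- is a non sequitur: $P_k(sa)\neq P_k(s)$ says nothing about whether $P_k(sa)\in\overline{M_k}$, and there is no reason the component in which $a$ happens to be locally controllable is the one that rejects $sa$. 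The correct argument is the paper's third case run in reverse: if $a\in B_{uo,j}$, then normality of $M_j$ gives $P_j(s)a\in (P^j_{o,j})^{-1}P^j_{o,j}(\overline{M_j})\cap P_j(L)=\overline{M_j}$ (using $P^j_{o,j}(a)=\eps$, $P_j(s)\in\overline{M_j}$ and $P_j(sa)\in P_j(L)$), so such a $j$ cannot be the excluding index; together with your observation that $a\notin B_j$ forces $P_j(sa)=P_j(s)\in\overline{M_j}$, every $j$ with $a\notin B_{c,j}$ fails to exclude $sa$, hence the excluding index automatically has $a\in B_{c,j}$. With that substitution your proof closes; in case (2) the issue does not arise, since $B_c\subseteq B_o$ makes $a\notin B_{uo,j}$ automatic.
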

  \begin{proof}
    For the sake of contradiction, assume that language $\overline{M}$ is not coobservable with respect to $L$ and $(B_{o,i})_{i=1}^{n}$. Then there exist $s\in\overline{M}$ and $a\in B_c$ such that $sa \in L \setminus \overline{M}$ and, for each $i \in \{1,2,\ldots,n\}$, either $a \notin B_{c,i}$ or $P_{o,i}^{-1}P_{o,i}(s) \{a\} \cap \overline{M} \neq \emptyset$. Let $t = P_i(s) \in P_i(\overline{M}) \subseteq \overline{M_i}$. Then $sa\in L$ implies that $tP_i(a)\in P_i(L)$. We show below that $sa \in P_i^{-1}(\overline{M_i})$. It then completes the proof, since $sa \in \bigcap_{i=1}^n P_i^{-1}(\overline{M_i}) = \overline{M}$, which is a contradiction with $sa\in L\setminus\overline{M}$.
    
    If $a\in B_{c,i}$, then there exists $s_i$ such that $s_ia \in \overline{M}$ and $P_{o,i}(s_i)=P_{o,i}(s)$. Let $t_i=P_i(s_i)\in P_i(\overline{M}) \subseteq \overline{M_i}$. Then $s_ia \in \overline{M}$ implies that $t_ia\in P_i(\overline{M}) \subseteq \overline{M_i}$. Moreover, $P^i_{o,i}(t) = P^i_{o,i}(P_i(s)) = P_{o,i}(s) = P_{o,i}(s_i) = P^i_{o,i}(P_i(s_i)) = P^i_{o,i}(t_i)$. Since $t_i a\in \overline{M_i}$ and $t_i a\in (P_{o,i}^i)^{-1}P_{o,i}^i(t)\{a\}$, observability of $M_i$ with respect to $P_i(L)$ and $P_{o,i}^i$ implies that $ta = P_i(sa) \in \overline{M_i}$, that is, $sa\in P_i^{-1}(\overline{M_i})$.
    
    If $a\notin B_{c,i}$, then $a\not\in B_{o,i}\cap B_c$. Since $a\in B_c$, we have that $a\not\in B_{o,i}$. If $a\not\in B_{uo,i}$, then $a\not\in B_i$. Hence $P_i(a)=\eps$, and $P_i(sa)=P_i(s)\in P_i(\overline{M})\subseteq \overline{M_i}$ implies that $sa\in P_i^{-1}(\overline{M_i})$.
    If $a\in B_{uo,i}$, then normality of $M_i$ with respect to $P_i(L)$ implies that $\overline{M_i} = (P_{o,i}^{i})^{-1} P_{o,i}^{i} (\overline{M_i}) \cap P_i(L)$. Since $t\in \overline{M_i}$ and $P_{o,i}^{i}(a) = \eps$, we have that $ta \in (P_{o,i}^{i})^{-1} P_{o,i}^{i} (\overline{M_i})$. Then $ta \in P_i(L)$ implies that $ta \in \overline{M_i}$, and $ta = P_i(sa)$ then gives that $sa \in P_i^{-1}(\overline{M_i})$.
    This completes the proof for $M_i$ normal.

    If $M_i$ is not normal, we have that $B_c \subseteq B_o$ and $M_i$ is observable. Then $a \in B_c$ implies that $a \not\in B_{uo}$, hence the only possible case is $a \notin B_{uo,i}$, which we have already shown above.
  \end{proof}

  The way we compute the languages $\R_i$ is as follows. We decompose specification $K$ in such a way that $K = \|_{i=1}^{n} K_i$, where $K_i$ are languages over $B_{i}$, and over-approximate $L$ by the synchronous product of its projections $P_{i}(L)$ on alphabets $B_i$. The condition required on $K$ does not always hold and is equivalent to the notion of {\em separability\/} defined below. How to construct the alphabets $B_i$ so that $K$ satisfies the separability condition is discussed in Section~\ref{sec:separability}. The languages $\R_i$ are then computed locally as sublanguages or superlanguages of $K_i$ that satisfy the sufficient conditions (Lemma~\ref{feng} and Theorem~\ref{newthm}) that make their synchronous product $\R$ controllable, coobservable and included in $K$. These computations are discussed in Section~\ref{sec:comp}.

  In Theorem~\ref{newthm}, we assume that if a supervisor observes a controllable event, it can also control it; that is, $B_{o,i}\cap B_c\subseteq B_{c,i}$. This is a new condition that deserves a discussion. Rudie and Wonham~\cite{RW92} showed that under the assumption that a supervisor can always observe the events it can control, that is, $B_{c,i}\subseteq B_{o,i}$, decomposability (a generalization of separability) is equivalent to coobservability. Our condition $B_{o,i}\cap B_c\subseteq B_{c,i}$ is weaker in the sense that it does not require that $B_{c,i}$ is included in $B_{o,i}$. Similarly, the assumption $B_c \subseteq B_o$ in case~2 of Theorem~\ref{newthm} does not mean that $B_{c,i}$ is included in $B_{o,i}$. It only requires that every controllable event is observed by one of the supervisors. 

  To justify our assumption, let $a$ be a controllable event that is observable by a supervisor $S_i$. If $a$ is not physically controllable by $S_i$, we can still make use of the advantage that $S_i$ observes $a$. Namely, $S_i$ may provide information about $a$ as if $a$ was controllable for it. The fusion rule (global supervisor $S$) then decides which events need to be disabled in the current situation, and communicates this decision back to the supervisors. If $S_i$ requires that $a$ needs to be disabled, the global supervisor will require to disable $a$. Since $a$ is controllable, there must be a local supervisor $S_j$ that can physically control $a$. Then supervisor $S_j$ will take care of disabling $a$. Another view is that if $S_i$ finds out that $a$ needs to be disabled, it communicates this observation directly to $S_j$, which takes the corresponding actions.

\section{Separability and Communication}\label{sec:separability}
  In this section, we define the notion of separability and suggest a procedure to construct the alphabets $B_i$ so that $K$ is separable with respect to $(B_i)_{i=1}^{n}$ as required in our approach.
  
  A conceptually simpler condition than coobservability is known in the literature as decomposability~\cite{RudieW90}. A language $K$ over $A$ is {\em decomposable\/} with respect to alphabets $(A_{i})_{i=1}^{n}$ and $L$ if $K = ( \bigcap_{i=1}^{n} P_i^{-1}P_{i}(K) ) \cap L$, where $P_i$ is the projection from $A^*$ to $A_i^*$. The inclusion $K \subseteq \bigcap_{i=1}^{n} P_i^{-1}P_{i}(K) \cap L$ holds true whenever $K\subseteq L$. A special case of decomposability for $L=A^*$ is known as {\em separability\/}~\cite{WH1991}. For $A = \bigcup_{i=1}^{n} A_i$, we can replace intersection in the definition by parallel composition. Namely, language $K$ is separable with respect to alphabets $(A_i)_{i=1}^{n}$ if $K =  \|_{i=1}^{n} P_{i}(K)$. As already pointed out, separability of $K$ with respect to alphabets $(A_i)_{i=1}^{n}$ is equivalent to the existence of languages $K_i$ over $A_i$ such that the synchronous product of $K_i$ results in $K$. Then $P_i(K)$ is included in $K_i$, hence the languages $P_i(K)$ are the minimal (infimal) languages (with respect to inclusion) whose synchronous product results in $K$.

  From the computational point of view, the first question is the complexity of deciding whether language $K$ is separable with respect to alphabets $(A_i)_{i=1}^{n}$. We show that the problem is PSPACE-complete. A decision problem is PSPACE-complete if it can be solved in polynomial space with respect to the size of the input and if every problem that can be solved in polynomial space can be reduced to it in polynomial time. A proof of the following result can be found in the appendix. 

  \newcounter{thm6again}
  \setcounter{thm6again}{\value{theorem}}
  \begin{theorem}\label{thm6}
    The following problem is PSPACE-complete.
    \begin{itemize}
      \item[] \textsc{Input:} Alphabets $A_1,A_2,\ldots,A_n$ and a generator $H$ over $\bigcup_{i=1}^{n} A_i$.
      \item[] \textsc{Output:} {\tt Yes} if and only if $L_m(H)$ is separable with respect to $(A_i)_{i=1}^{n}$.
    \end{itemize}
  \end{theorem}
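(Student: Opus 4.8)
To show the separability problem is PSPACE-complete, I would split the proof into membership in PSPACE and PSPACE-hardness.

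For membership, the plan is to observe that separability of $L_m(H)$ with respect to $(A_i)_{i=1}^n$ fails iff there is a word $w \in \|_{i=1}^n P_i(L_m(H))$ with $w \notin L_m(H)$. A generator for each $P_i(L_m(H))$ can be obtained by projection (subset construction / $\eps$-removal) but is of exponential size, so I would not build it explicitly. Instead I would search for a witness $w$ nondeterministically in polynomial space: guess $w$ symbol by symbol, maintaining for each $i$ a reachable state of a deterministic automaton for $P_i(L_m(H))$ — but since that determinized automaton is exponential, I would instead keep, for each $i$, the \emph{subset} of states of $H$ reachable on some preimage of $P_i(w)$ under $P_i$ (a subset of $Q$, hence polynomial space), and simultaneously track a single state of $H$ on $w$ itself. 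Acceptance of the witness: every $i$-subset contains a marked state of $H$ reachable via an appropriate preimage (so $P_i(w) \in P_i(L_m(H))$), while the tracked $H$-state on $w$ is not marking (so $w \notin L_m(H)$). This is an NPSPACE computation, and by Savitch's theorem NPSPACE $=$ PSPACE; a standard length bound on the shortest witness (at most the product of the sizes of the $n$ subset automata, whose logarithm is polynomial) lets the procedure terminate. I must be careful that ``$P_i(w) \in P_i(L_m(H))$'' is correctly captured: a state subset reachable on the $A_i$-image of $w$ by tracking, for each prefix, the set of $H$-states reachable by words projecting to that prefix — this is the usual projection-automaton construction carried out on the fly.

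For hardness, the plan is to reduce from a known PSPACE-complete problem about regular languages — the natural candidate is the result already alluded to in the text, namely that deciding \emph{decomposability} (equivalently, non-emptiness / universality-type questions for intersections of projected automata, or the shuffle/synchronous-product nonmembership problem) is PSPACE-hard; more concretely I would reduce from the intersection-nonemptiness problem for deterministic finite automata, or directly adapt the known PSPACE-hardness proof for decomposability. Given automata $H_1,\dots,H_m$ over a common alphabet, one wants to encode ``$\bigcap L(H_j) \neq \emptyset$'' (or its complement) into a separability instance. The idea is to choose alphabets $A_i$ and a single generator $H$ so that the projected languages $P_i(L_m(H))$ recover the individual $L(H_j)$ (padded with private events) while $L_m(H)$ itself is the synchronous product only when the original intersection behaves as required; separability then holds iff the encoded intersection property holds. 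Since decomposability with $L = A^*$ is exactly separability, and the literature already establishes PSPACE-hardness of decomposability, the cleanest route is to exhibit a polynomial-time reduction from the decomposability instances used there, with $L$ replaced by $A^*$ via a padding construction that neutralizes the role of $L$.

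The main obstacle I anticipate is the hardness direction: ensuring the reduction is faithful in \emph{both} directions simultaneously — that spurious words in the synchronous product of the projections $P_i(L_m(H))$ (words that interleave the private alphabets in unintended ways) do not accidentally land inside $L_m(H)$ and thus do not destroy the equivalence. Controlling these interleavings typically requires a careful use of shared synchronization events or a linear ordering of the private alphabet blocks, and getting the alphabet partition right so that $P_i(L_m(H))$ is exactly the intended padded language (not a strict superset) is the delicate point. The membership direction is comparatively routine once one commits to doing the projection subset-construction on the fly rather than materializing it, together with an appeal to Savitch's theorem and a polynomial bound on the shortest counterexample word.
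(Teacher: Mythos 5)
Your membership argument matches the paper's (the paper dispatches it in one sentence as an on-the-fly simulation of the product of the projection automata; your NPSPACE-plus-Savitch elaboration is the standard way to make that precise). Your choice of source problem for hardness---intersection nonemptiness for deterministic finite automata---is also exactly the paper's choice. The gap is that your hardness direction stops at the point where the actual proof begins: you describe the \emph{shape} of a reduction (projections recovering padded copies of the $L(G_j)$) and correctly flag that controlling spurious interleavings is the delicate point, but you do not exhibit a construction that resolves it, and that construction plus its two-directional verification is essentially the entire content of the paper's proof.

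For comparison, the paper's gadget is not the ``private alphabet blocks'' you envision. Given DFAs $G_1,\dots,G_n$ over $\Sigma$, it sets $E=\Sigma\cup\{e_1,\dots,e_n,c\}$ and, crucially, takes each local alphabet to be a \emph{co-singleton}: $E_i=E\setminus\{e_i\}$, so $P_i$ erases exactly one tag event. The generator $H$ (all states marked) is built so that $L(H)=\Sigma^*\cup\{e_1,\dots,e_n\}^*\cup\bigcup_i e_iL(G_i)\cup\bigcup_i e_iL_m(G_i)c$. If $t\in\bigcap_i L_m(G_i)$, then $e_itc\in L(H)$ for every $i$, hence $tc\in P_i^{-1}P_i(L(H))$ for every $i$, yet $tc\notin L(H)$, killing separability. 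The converse requires a case analysis showing that \emph{every} word in $\|_i P_i(L(H))\setminus L(H)$ must have the form $tc$ with $t\in\bigcap_i L_m(G_i)$: one must rule out words with no $c$, words beginning with two distinct tag events, and words of the form $e_iL_m(G_i)c$, each of which turns out to lie in $L(H)$ after all. This is precisely the ``spurious interleavings do not accidentally land inside $L_m(H)$'' issue you identified, but handled by the co-singleton alphabet trick rather than by a linear ordering of private blocks; without some such concrete mechanism and its verification, the reduction is not established.
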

  
  The size of the input is the number of states and transitions of the generator $H$ and the size of the $n$ alphabets $A_1,\ldots, A_n$. The verification can be done in polynomial time by a direct computation if the number of alphabets is restricted by a constant. Therefore, the unrestricted number of alphabets (supervisors) is what makes the problem computationally difficult.
  
  Separability is a special case of decomposability where $L$ is universal. As a consequence, decomposability of $K$ with respect to $(A_{i})_{i=1}^{n}$ and $L$ is PSPACE-complete even if $L=A^*$. This generalizes a result that can be derived from the literature. Namely, coobservability of $K$ with respect to $(A_{i})_{i=1}^{n}$ and $L$ is known to be PSPACE-complete~\cite{RohloffYL03}. Under some assumptions~\cite[Proposition~4.3]{RW92}, decomposability is equivalent to coobservability. Since the reduction by Rohloff et al.~\cite{RohloffYL03} satisfies these assumptions, decomposability of $K$ with respect to $(A_{i})_{i=1}^{n}$ and $L$ is PSPACE-complete. In those proofs, however, $L$ is different from $A^*$, hence our theorem generalizes this result.

  Another question, which we have to face, is what to do if language $K$ is not separable with respect to $(A_i)_{i=1}^{n}$. In this case, it would be natural to take a maximal (with respect to inclusion) sublanguage of $K$ that is separable with respect to $(A_i)_{i=1}^{n}$. Unfortunately, Lin et al.~\cite{LinSSWS14} have shown that to find such a maximal sublanguage is not algorithmically possible.

  To overcome these issues -- high complexity and undecidability -- we use the notion of conditional decomposability~\cite{KMvS12cd}. A language $K$ is {\em conditionally decomposable\/} with respect to alphabets $(A_{i})_{i=1}^{n}$ and an alphabet $\Sigma$ if $\Sigma$ contains all shared events, that is, $\bigcup_{i\neq j} (A_i\cap A_j)\subseteq \Sigma \subseteq \bigcup_{i=1}^{n} A_i$, and $K$ is separable with respect to alphabets $(A_i\cup\Sigma)_{i=1}^{n}$, that is, $K = \|_{i=1}^{n} P_{i+\Sigma}(K)$, where $P_{i+\Sigma}$ denotes the projection from $A^*$ to $(A_{i}\cup \Sigma)^*$. Conditional decomposability thus requires to find an alphabet $\Sigma$ containing all shared events such that $K$ is separable with respect to $(A_i\cup \Sigma)_{i=1}^{n}$. 
  
  Compared to separability, there are two advantages of conditional decomposability. 
  First, every language can be made conditionally decomposable by finding a convenient alphabet $\Sigma$. Such an alphabet always exists; indeed, one could take $\Sigma = \bigcup_{i=1}^{n} A_i$, but the aim is to find a reasonably small alphabet. This advantage of conditional decomposability helps us overcome the undecidable issue of finding a maximal nonempty separable sublanguage.
  The second advantage is a lower complexity of checking conditional decomposability. To check whether $K$ is conditionally decomposable with respect to $(A_i)_{i=1}^{n}$ and $\Sigma$ can be done in polynomial time, compared to PSPACE for separability. What allows this efficiency is the assumption that $\Sigma$ contains all shared events. The following theorem is a generalization of a result obtained for pairwise disjoint alphabets by Willner and Heymann~\cite{WH1991}.
  \begin{theorem}[\cite{KMvS12cd}]\label{thmgen}
    Let $K$ be a language represented as a generator, and let $(A_i)_{i=1}^{n}$ and $\Sigma$ be alphabets. The problem to decide whether $K$ is conditionally decomposable with respect to $(A_i)_{i=1}^{n}$ and $\Sigma$ can be solved in polynomial time.
  \end{theorem}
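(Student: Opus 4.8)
\emph{Proof plan.}
Since $\Sigma\subseteq\bigcup_{i=1}^n A_i$, both $K$ and the parallel composition $\|_{i=1}^n P_{i+\Sigma}(K)$ are languages over $A=\bigcup_{i=1}^n A_i=\bigcup_{i=1}^n(A_i\cup\Sigma)$, and the inclusion $K\subseteq\|_{i=1}^n P_{i+\Sigma}(K)$ holds automatically. So, after verifying in polynomial time that $\bigcup_{i\ne j}(A_i\cap A_j)\subseteq\Sigma\subseteq\bigcup_i A_i$, conditional decomposability reduces to deciding the reverse inclusion $\|_{i=1}^n P_{i+\Sigma}(K)\subseteq K$. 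The hypothesis $\bigcup_{i\ne j}(A_i\cap A_j)\subseteq\Sigma$ is exactly what makes this tractable: writing $E_i=A_i\setminus\Sigma$ for the \emph{private} alphabet of component $i$, the sets $E_1,\dots,E_n$ are pairwise disjoint (an event in $E_i\cap E_j$ would be a shared event outside $\Sigma$) and $(A_i\cup\Sigma)\cap(A_j\cup\Sigma)=\Sigma$ for $i\ne j$. Hence the languages $P_{i+\Sigma}(K)$ synchronise only on $\Sigma$, and every word of $\|_{i=1}^n P_{i+\Sigma}(K)$ arises by interleaving private letters along a common $\Sigma$-backbone.

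I would first establish a \emph{gluing lemma}: if $K=\|_{i=1}^n P_{i+\Sigma}(K)$, then for every $J\subseteq\{1,\dots,n\}$ one has $\|_{j\in J}P_{j+\Sigma}(K)=P_{\Sigma\cup\bigcup_{j\in J}E_j}(K)$ -- a word over $\Sigma\cup\bigcup_{j\in J}E_j$ whose $(A_j\cup\Sigma)$-projection lies in $P_{j+\Sigma}(K)$ for each $j\in J$ can be merged, along its $\Sigma$-image and using the disjointness of the $E_j$, with a witness word of $K$ into a single word all of whose $(A_i\cup\Sigma)$-projections are legal, hence a word of $K$. The structural heart of the argument is then the equivalence: $K$ is conditionally decomposable with respect to $(A_i)_{i=1}^n$ and $\Sigma$ if and only if, for each $i=1,\dots,n$, the two-component separability
\[
  K=P_{i+\Sigma}(K)\ \parallel\ P_{A\setminus E_i}(K)
\]
holds. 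The forward implication follows from the gluing lemma applied to $J=\{1,\dots,n\}\setminus\{i\}$. For the converse I would induct on $n$: from $K=P_{1+\Sigma}(K)\parallel P_{A\setminus E_1}(K)$ one peels off $E_1$, and projecting the identities $K=P_{i+\Sigma}(K)\parallel P_{A\setminus E_i}(K)$ for $i\ge 2$ onto $A\setminus E_1$ yields identities of the same shape for the smaller language $P_{A\setminus E_1}(K)$ and the smaller family $(A_2,\dots,A_n)$; here it is essential that $A\setminus E_1\supseteq\Sigma$ still contains the shared alphabet $\Sigma$ of the two factors, so that the projection distributes over $\parallel$.

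Finally, each of the $n$ tests $P_{i+\Sigma}(K)\parallel P_{A\setminus E_i}(K)\subseteq K$ is decidable in polynomial time directly from the given generator $G$ of $K$: relabelling the letters of $E_j$ ($j\ne i$) as $\eps$ turns $G$ into an $\eps$-NFA of size $O(|G|)$ for $P_{i+\Sigma}(K)$, and likewise one obtains an automaton of size $O(|G|)$ for $P_{A\setminus E_i}(K)$; the complement $G^{c}$ is a deterministic generator of size $O(|G|)$ because generators here are deterministic; and the test is the emptiness of the product of these three automata over $A$ (the two projected factors self-looping on the letters they ignore), a generator of size $O(|G|^{3})$, checkable in linear time. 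Running this for $i=1,\dots,n$ gives a polynomial-time algorithm. The main obstacle is the structural equivalence above: one must make sure that at every step of the induction the alphabet onto which we project still contains all shared events, which is precisely what the standing hypothesis guarantees, so that projections commute with synchronous product and no behaviour is lost; for pairwise disjoint alphabets ($\Sigma=\emptyset$) this specialises to the argument of Willner and Heymann, and the present proof generalises it by carrying $\Sigma$ through every step. Note that the hypothesis cannot be dropped: for an unrestricted family, deciding $K=\|_i P_{i+\Sigma}(K)$ is the separability problem, which is PSPACE-complete by Theorem~\ref{thm6}.
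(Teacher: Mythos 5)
This theorem is imported from \cite{KMvS12cd} and the paper gives no proof of its own, so there is nothing in-paper to compare against; your reconstruction is correct and is essentially the argument of that reference: reduce $n$-fold conditional decomposability to the $n$ two-component tests $P_{i+\Sigma}(K)\parallel P_{A\setminus E_i}(K)\subseteq K$, which is sound precisely because all shared events lie in $\Sigma$, so the private alphabets $E_i$ are pairwise disjoint and projections distribute over $\parallel$ at every step of the peeling induction (the paper's Lemma~\ref{lemma:Wonham}), and then decide each test as an inclusion of a product of two $O(|G|)$-size nondeterministic (inverse-projection) automata into the deterministic generator for $K$. No gaps.
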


  Recall that our idea to compute the controllable and coobservable sublanguage involves an over-approximation of the plant language $L$ by a new modular plant $\|_{i=1}^{n}P_{i+\Sigma}(L)$. We show in the following lemma that it is better to consider the projections $P_{i+\Sigma}(L)$ rather than $P_{i}(L)$ used in Komenda et al.~\cite{KMP06} because the larger the extension $\Sigma$, the better the over-approximation of $L$.
  
  \begin{lemma}\label{lemInclusion}
    Let $(A_i)_{i=1}^{n}$ be alphabets, and let $L$ be a language over the alphabet $A = \bigcup_{i=1}^{n} A_i$. Let $\Sigma \subseteq A$ be an alphabet, and let $P_i\colon A^* \to A_i^*$ and $P_{i+\Sigma}\colon A^* \to (A_i\cup\Sigma)^*$ be projections. Then $L \subseteq \|_{i=1}^{n} P_{i+\Sigma}(L) \subseteq \|_{i=1}^{n} P_i(L)$.
  \end{lemma}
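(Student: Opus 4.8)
The plan is to prove the two inclusions $L \subseteq \|_{i=1}^{n} P_{i+\Sigma}(L)$ and $\|_{i=1}^{n} P_{i+\Sigma}(L) \subseteq \|_{i=1}^{n} P_i(L)$ separately, both by unwinding the definition $\|_{i=1}^{n} L_i = \bigcap_{i=1}^n R_i^{-1}(L_i)$ where $R_i$ is the projection onto the $i$-th alphabet. The first inclusion is the standard fact that a language is always contained in the synchronous product of its own projections: for any $w \in L$ and any $i$, we have $P_{i+\Sigma}(w) \in P_{i+\Sigma}(L)$, hence $w \in P_{i+\Sigma}^{-1}(P_{i+\Sigma}(L))$; intersecting over $i$ gives $w \in \|_{i=1}^{n} P_{i+\Sigma}(L)$. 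Note the underlying alphabet of the product on the left is $\bigcup_{i=1}^n (A_i \cup \Sigma) = A$ since $\Sigma \subseteq A$, so there is no alphabet mismatch.

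For the second inclusion, the key observation is the factorization of projections through intermediate alphabets: since $A_i \subseteq A_i \cup \Sigma \subseteq A$, the projection $P_i \colon A^* \to A_i^*$ factors as $P_i = Q_i \circ P_{i+\Sigma}$, where $Q_i \colon (A_i\cup\Sigma)^* \to A_i^*$ is the projection erasing the events of $\Sigma \setminus A_i$. The plan is then: take $w \in \|_{i=1}^{n} P_{i+\Sigma}(L)$, so for each $i$ there is some $v_i \in L$ with $P_{i+\Sigma}(w) = P_{i+\Sigma}(v_i)$. Applying $Q_i$ to both sides gives $P_i(w) = Q_i(P_{i+\Sigma}(w)) = Q_i(P_{i+\Sigma}(v_i)) = P_i(v_i) \in P_i(L)$. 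Hence $w \in P_i^{-1}(P_i(L))$ for every $i$, and intersecting yields $w \in \|_{i=1}^{n} P_i(L)$. Concretely, one can also argue monotonically: $P_{i+\Sigma}^{-1}(P_{i+\Sigma}(L)) \subseteq P_i^{-1}(P_i(L))$ because $P_{i+\Sigma}(L) \subseteq P_{i+\Sigma}(P_i^{-1}(P_i(L)))$ and $P_{i+\Sigma}^{-1}$ is monotone — but the factorization-of-projections route is cleaner.

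Neither step involves a genuine obstacle; this is a routine lemma about projections and synchronous products. The only point requiring mild care is bookkeeping the alphabets: checking that all three synchronous products live over the same alphabet $A$ (so the inclusions are between subsets of $A^*$), and verifying the composition identity $P_i = Q_i \circ P_{i+\Sigma}$ at the level of generators/morphisms before extending to languages. I would state the composition identity $P_i = Q_i\circ P_{i+\Sigma}$ as the one explicit computation, and leave the rest as a direct chain of set inclusions.
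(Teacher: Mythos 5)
Your proposal is correct and follows essentially the same route as the paper: the first inclusion is the standard fact that a language is contained in the synchronous product of its own projections, and the second rests on the factorization $P_i = Q_i \circ P_{i+\Sigma}$, which is exactly the paper's projection $P_i^{i+\Sigma}$ restricted to $(A_i\cup\Sigma)^*$. The only cosmetic difference is that you argue element-wise with witnesses $v_i\in L$ while the paper phrases the same step as a chain of inclusions between inverse-image sets.
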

  \begin{proof}
    The first inclusion holds for any projection. To prove the other inclusion, notice that it holds that $P_{i+\Sigma}(L) \subseteq (P_{i}^{i+\Sigma})^{-1} (P_{i}^{i+\Sigma} (P_{i+\Sigma}(L))) = (P_{i}^{i+\Sigma})^{-1}(P_i(L))$, where $P_i^{i+\Sigma}$ is the projection $P_i$ restricted to the domain $(A_i\cup\Sigma)^*$. Then $P_{i+\Sigma}^{-1}(P_{i+\Sigma}(L)) \subseteq P_{i+\Sigma}^{-1}(P_{i}^{i+\Sigma})^{-1}(P_i(L)) = P_{i}^{-1} P_i(L)$, which completes the proof.
  \end{proof}

  It may seem that the largest $\Sigma$ is the best choice. However, a larger $\Sigma$ means more communication or more sensors to observe the system (the local supervisors need to observe more). On the other hand, to compute a minimal extension $\Sigma$ with respect to the cardinality is an NP-hard problem~\cite{KMvS13JDEDS}. Nevertheless, there is an algorithm to find an acceptable extension in polynomial time~\cite{KMvS12cd}. In general, to find a suitable extension in a reasonable time is an interesting research topic. The choice of $\Sigma$ can be influenced by several factors, such as the price of sensors, the (im)possibility to observe an event (by a specific local supervisor) etc.
  
  Even if we consider the minimal extension $\Sigma$, it may happen that too many events are forced to be communicated between all supervisors though it is not needed. We demonstrate this in the following example. It shows that it is more convenient to search for some local alphabets so that the specification is separable with respect to them.

  \begin{example}\label{ex20}
    Consider the language $K$ over $A=\{a,b,c,d,e,f\}$, whose generator is depicted in Fig.~\ref{spec}, and the alphabets $A_{o,1}=\{a,e\}$, $A_{o,2}=\{b,e\}$, $A_{o,3}=\{c,f\}$, and $A_{o,4}=\{d,f\}$.
    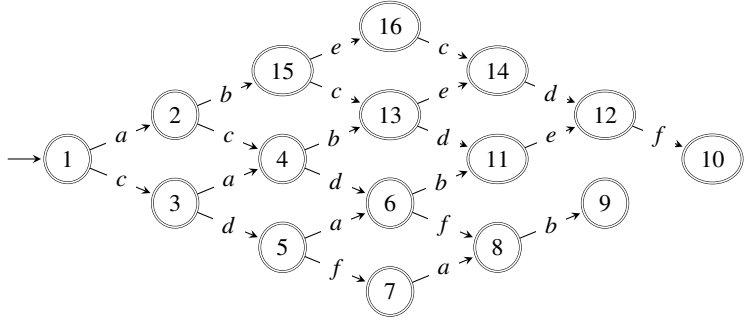
\begin{figure}
      \centering
      \begin{tikzpicture}[baseline,->,>=stealth,shorten >=1pt,node distance=2cm,
        state/.style={ellipse,minimum size=6mm,very thin,draw=black,initial text=},
        every node/.style={fill=white,accepting,font=\small},
        bigloop/.style ={shift={(0,0.0)},text width=.6cm,align=center},
        bigloopd/.style={shift={(0,0.0)},text width=.6cm,align=center}]
        \node[state,initial]  (1)               {$1$};
        \node[state]          (2) [below right of=1,above of=1]  {$2$};
        \node[state]          (3) [above right of=1,below of=1]  {$3$};
        \node[state]          (4) [below right of=3,above of=3]  {$4$};
        \node[state]          (5) [above right of=3,below of=3]  {$5$};
        \node[state]          (15) [below right of=2,above of=2]  {$15$};
        \node[state]          (16) [below right of=15,above of=15]  {$16$};
        \node[state]          (13) [above right of=15,below of=15]  {$13$};
        \node[state]          (6) [below right of=5,above of=5]  {$6$};
        \node[state]          (7) [above right of=5,below of=5]  {$7$};
        \node[state]          (14) [below right of=13,above of=13]  {$14$};
        \node[state]          (11) [above right of=13,below of=13]  {$11$};
        \node[state]          (8) [above right of=6,below of=6]  {$8$};
        \node[state]          (12) [below right of=11,above of=11]  {$12$};
        \node[state]          (9) [above right of=11,below of=11]  {$9$};
        \node[state]          (10) [above right of=12,below of=12]  {$10$};

        \foreach \from/\to/\pis in {1/2/a, 1/3/c, 2/15/b, 2/4/c, 3/4/a, 3/5/d, 4/13/b, 4/6/d, 5/6/a, 5/7/f, 6/11/b, 6/8/f, 7/8/a, 8/9/b, 11/12/e, 12/10/f, 13/14/e, 13/11/d, 14/12/d, 15/16/e, 15/13/c, 16/14/c}
          \path (\from) edge node{$\pis$} (\to);
      \end{tikzpicture}
      \caption{Generator for language $K$ of Example~\ref{ex20}}
      \label{spec}
    \end{figure}
    To make $K$ conditionally decomposable with respect to $(A_{o,i})_{i=1}^{4}$ and $\Sigma$, the extension $\Sigma$ must contain at least all shared events, that is, $e$ and $f$; actually, $\Sigma=\{e,f,a,c\}$ is a minimal extension making $K$ conditionally decomposable with respect to $(A_{o,i})_{i=1}^{4}$ and $\Sigma$.
    However, the reader may notice that $a$ needs to be communicated only between the supervisors $S_1$ and $S_2$, whereas $c$ needs to be communicated only between the supervisors $S_3$ and $S_4$. Specifically, $K$ is conditionally decomposable with respect to alphabets $A_{o,1}\cup A_{o,2}$, $A_{o,3}\cup A_{o,4}$ and $\Sigma_{all}=\{f\}$; that is, $K = P_{\{a,b,e,f\}}(K) \parallel P_{\{c,d,f\}}(K)$. Having this, notice that language $P_{\{a,b,e,f\}}(K)=\overline{\{abef,afb,fab\}}$ is conditionally decomposable with respect to $A_{o,1}\cup\Sigma_{all}$, $A_{o,2}\cup\Sigma_{all}$ and $\Sigma_{1,2}=\{a,e,f\}$, and language  $P_{\{c,d,f\}}(K)=\overline{\{cdf\}}$ is conditionally decomposable with respect to $A_{o,3}\cup\Sigma_{all}$, $A_{o,4}\cup\Sigma_{all}$ and $\Sigma_{3,4}=\{c,f\}$. This means that $a$ and $c$ are communicated only locally and only $f$ is communicated globally. In other words, $K$ is separable with respect to alphabets $\{a,e,f\}$, $\{a,b,e,f\}$, $\{c,f\}$, and $\{c,d,f\}$. 
    
    The reader can compare this with the purely conditional decomposable case computing the single extension $\Sigma=\{e,f,a,c\}$ making the language $K$ separable with respect to $(A_{o,i}\cup\Sigma)_{i=1}^{n}$, that is, with respect to alphabets $\{a,c,e,f\}$, $\{a,b,c,e,f\}$, $\{a,c,e,f\}$, and $\{a,c,d,e,f\}$.
  \qed\end{example}
  
  In Example~\ref{ex20} we need to check separability of $K$ with respect to alphabets $A_{o,1}\cup A_{o,2}\cup\Sigma_{all}=\{a,b,e,f\}$ and $A_{o,3}\cup A_{o,4}\cup\Sigma_{all}=\{c,d,f\}$. According to Theorem~\ref{thmgen}, this can be done in polynomial time. Moreover, the alphabets $\Sigma_{all}$, $\Sigma_{1,2}$ and $\Sigma_{3,4}$ are computed in polynomial time. This suggests the following refinement of the conditional decomposability procedure. 

  \paragraph{Procedure: \sc Refined Conditional Decomposability (Rcd)\\}\label{RCDpage}
  Let $(A_i)_{i=1}^{n}$ be alphabets such that $A_i\neq A_j$ for $i\neq j$, and let $K$ be a language over $\bigcup_{i=1}^{n} A_i$. We define the equivalence relation $\sim$ as the minimal equivalence relation such that $A_i \sim A_j$ if $A_i$ and $A_j$ are not disjoint ($A_i\cap A_j\neq\emptyset$). The following steps of the procedure are illustrated in Example~\ref{ex21} below.
  \begin{enumerate}
    \item Let $A_i/_{\sim} = \{A_j \mid A_j \sim A_i, 1\le j\le n\}$ be the equivalence class of alphabets equivalent to $A_i$. Let $C_i = \bigcup_{A_j \in A_i/_{\sim}} A_j$ be the set of all events that appear in $A_i/_{\sim}$. Let $\{C_{k_1},\ldots,C_{k_m}\}=\{C_1,\ldots,C_n\}$, that is, we remove duplicates, hence we have $C_{k_i} \cap C_{k_j} = \emptyset$ for $k_i\neq k_j$.
    
    \item We first compute a global extension $\Sigma_{all}$ of events shared by the sets $(C_{k_i})_{i=1}^{m}$, which makes $K$ conditionally decomposable with respect to $(C_{k_i})_{i=1}^{m}$ and $\Sigma_{all}$. This gives
    \[
      K = \|_{i=1}^{m} P_{C_{k_i}\cup\Sigma_{all}}(K)\,.
    \]
    
    \item Then, for each local part $A_{k_i}/_{\sim}$, $i=1,2,\ldots,m$, we make the language $P_{C_{k_i}\cup \Sigma_{all}}(K)$ conditionally decomposable with respect to $D_{k_i} = \{A_j\cup\Sigma_{all} \mid A_j\in A_{k_i}/_{\sim}\}$ and $\Sigma_{k_i}$. The set notation is used here to eliminate duplicates. If $D_{k_i}$ is a singleton, we set $\Sigma_{k_i}=\Sigma_{all}$. This then gives that
    \[
      P_{C_{k_i}\cup\Sigma_{all}}(K) 
      = \|_{C\in D_{k_i}} P_{C\cup\Sigma_{k_i}}(K)\,.
    \]
    Notice that if $C,C'\in D_{k_i}$, then $\Sigma_{all}\subseteq C\cap C'$, which implies that $\Sigma_{all}\subseteq \Sigma_{k_i}$.
    
    \item Finally, for every $A_j \in A_{k_i}/_{\sim}$, we set $\Sigma_j = \Sigma_{k_i}$.
  \end{enumerate}
  
  We now illustrate this procedure on a previous example.
  \begin{example}\label{ex21}
    Consider Example~\ref{ex20}. In this case, we have that $A_{o,1}\sim A_{o,2}$ and $A_{o,3}\sim A_{o,4}$. Therefore, $A_1/_{\sim} = A_2/_{\sim} = \{A_{o,1},A_{o,2}\}$ and $A_3/_{\sim} = A_4/_{\sim} = \{A_{o,3}, A_{o,4}\}$. It further gives that $C_1 = C_2 = A_{o,1}\cup A_{o,2}$ and $C_3=C_4=A_{o,3}\cup A_{o,4}$. Then we compute $\Sigma_{all}=\{f\}$ such that $K$ is conditionally decomposable with respect to $C_1$, $C_3$ and $\Sigma_{all}$. In other words, $K$ is separable with respect to $C_1\cup\Sigma_{all}$ and $C_3\cup\Sigma_{all}$. The projections are $P_{C_1\cup\Sigma_{all}}\colon A^* \to \{a,b,e,f\}^*$ and $P_{C_3\cup\Sigma_{all}}\colon A^* \to \{c,d,f\}^*$. Then we compute $\Sigma_1=\{a,e,f\}$ such that language $P_{C_1\cup\Sigma_{all}}(K) = \overline{\{abef,afb,fab\}}$ is conditionally decomposable with respect to $A_{o,1}\cup\Sigma_{all}$, $A_{o,2}\cup\Sigma_{all}$ and $\Sigma_1$, and $\Sigma_3=\{c,f\}$ such that language $P_{C_3\cup\Sigma_{all}}(K) = \overline{\{cdf\}}$ is conditionally decomposable with respect to $A_{o,3}\cup\Sigma_{all}$, $A_{o,4}\cup\Sigma_{all}$ and $\Sigma_3$. Finally, we set $\Sigma_2=\Sigma_1$ and $\Sigma_4=\Sigma_3$. Language $K$ is then separable with respect to $(A_{o,i}\cup\Sigma_i)_{i=1}^{4}$, that is, with respect to alphabets $B_1=\{a,e,f\}$, $B_2=\{a,b,e,f\}$, $B_3=\{c,f\}$, and $B_4=\{c,d,f\}$.
  \qed\end{example}
  
  We now show that the procedure is correct.
  \begin{theorem}
    Let $(A_i)_{i=1}^{n}$ be alphabets such that $A_i\neq A_j$ for $i\neq j$, and let $K$ be a language over $\bigcup_{i=1}^{n} A_i$. Let $(\Sigma_i)_{i=1}^{n}$ be the extensions computed by procedure {\sc Rcd}. Then $K$ is separable with respect to $(A_i\cup\Sigma_i)_{i=1}^{n}$.
  \end{theorem}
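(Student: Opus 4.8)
The plan is to chain together the three separability identities produced by the successive steps of procedure {\sc Rcd}, using two standard facts. First, projections compose: for alphabets $Y\subseteq X\subseteq A$ and a language $L\subseteq A^*$ one has $P_Y(L)=P_Y^X(P_X(L))$. Second, the synchronous product is associative and commutative, so a nested synchronous product can be flattened into a single product over all pairs of indices, and repeated factors over a fixed alphabet can be merged, since $L\parallel L=L$.

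I would first record what the steps give. The second step produces
\[
  K=\|_{i=1}^{m}P_{C_{k_i}\cup\Sigma_{all}}(K)\,,
\]
since $\Sigma_{all}$ makes $K$ conditionally decomposable with respect to $(C_{k_i})_{i=1}^{m}$ and $\Sigma_{all}$. The third step produces, for each $i$, that $P_{C_{k_i}\cup\Sigma_{all}}(K)$ is separable with respect to the alphabets $(C\cup\Sigma_{k_i})_{C\in D_{k_i}}$. Before substituting, I would verify the alphabet inclusions that allow me to rewrite the resulting factors as projections of $K$: every $C\in D_{k_i}$ is of the form $C=A_j\cup\Sigma_{all}$ for some $A_j\in A_{k_i}/_{\sim}$, so $C\subseteq C_{k_i}\cup\Sigma_{all}$; conditional decomposability forces $\Sigma_{k_i}\subseteq\bigcup_{C\in D_{k_i}}C=C_{k_i}\cup\Sigma_{all}$; hence $C\cup\Sigma_{k_i}\subseteq C_{k_i}\cup\Sigma_{all}$, and by composition of projections the projection of $P_{C_{k_i}\cup\Sigma_{all}}(K)$ onto $(C\cup\Sigma_{k_i})^*$ equals $P_{C\cup\Sigma_{k_i}}(K)$. (If $D_{k_i}$ is a singleton, this identity is trivial, because then $C_{k_i}\cup\Sigma_{all}$ coincides with that single alphabet.) Substituting into the identity of the second step and flattening the nested product yields
\[
  K=\|_{i=1}^{m}\,\|_{C\in D_{k_i}}P_{C\cup\Sigma_{k_i}}(K)\,.
\]

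Next I would identify the inner factors with the target alphabets $B_j=A_j\cup\Sigma_j$. By the fourth step, $\Sigma_j=\Sigma_{k_i}$ for every $A_j\in A_{k_i}/_{\sim}$; and by the remark following the third step, $\Sigma_{all}\subseteq\Sigma_{k_i}$ (with equality when $D_{k_i}$ is a singleton, and otherwise because any two distinct members of $D_{k_i}$ both contain $\Sigma_{all}$, which therefore lies in $\Sigma_{k_i}$). Hence, for $C=A_j\cup\Sigma_{all}$, we obtain $C\cup\Sigma_{k_i}=A_j\cup\Sigma_{k_i}=A_j\cup\Sigma_j=B_j$. Since $A_1,\ldots,A_n$ are pairwise distinct, the classes $A_{k_1}/_{\sim},\ldots,A_{k_m}/_{\sim}$ partition $\{A_1,\ldots,A_n\}$, so as $i$ ranges over $1,\ldots,m$ and $C$ over $D_{k_i}$, the alphabets $C\cup\Sigma_{k_i}$ run through $B_1,\ldots,B_n$, up to repetitions that are immaterial in the synchronous product. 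This gives $K=\|_{j=1}^{n}P_{B_j}(K)=\|_{j=1}^{n}P_{j+\Sigma_j}(K)$, i.e. $K$ is separable with respect to $(A_j\cup\Sigma_j)_{j=1}^{n}$.

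I do not expect a genuinely hard step; the argument is essentially bookkeeping. The delicate points are the alphabet inclusions $\Sigma_{all}\subseteq\Sigma_{k_i}\subseteq C_{k_i}\cup\Sigma_{all}$, which are exactly what make the two-level composition of projections go through cleanly, and the final re-indexing from the doubly-indexed product back to $\|_{j=1}^{n}$, together with the observation that duplicate alphabets (which may occur within a class and, marginally, across classes) cause no harm.
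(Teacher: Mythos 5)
Your proof is correct and follows essentially the same route as the paper: chain the separability identity from the global extension $\Sigma_{all}$ with the class-local identities for the $\Sigma_{k_i}$, flatten the nested synchronous product using idempotence to absorb duplicates, and identify each factor's alphabet with $A_j\cup\Sigma_j$ via $\Sigma_{all}\subseteq\Sigma_{k_i}=\Sigma_j$. Your explicit verification that $C\cup\Sigma_{k_i}\subseteq C_{k_i}\cup\Sigma_{all}$, so that the inner projections of $P_{C_{k_i}\cup\Sigma_{all}}(K)$ compose to projections of $K$ itself, is a detail the paper leaves implicit; otherwise the arguments coincide.
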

  \begin{proof}
    Let $\{C_{k_1},\ldots,C_{k_m}\}=\{C_1,\ldots,C_n\}$. Then $C_{k_i} \cap C_{k_j} = \emptyset$, for $k_i\neq k_j$, since $C_{k_i}$ and $C_{k_j}$ contain events of different classes. After the computation of $\Sigma_{all}$, $K = \|_{i=1}^{m} P_{C_{k_i}\cup\Sigma_{all}}(K)$. Then, for each $D_{k_i}$, $i=1,\ldots,m$, we compute $\Sigma_{k_i}$ such that $P_{C_{k_i}\cup\Sigma_{all}}(K) = \|_{C\in D_{k_i}} P_{C\cup\Sigma_{k_i}}(K) = \|_{A_j\in A_{k_i}/_{\sim}} P_{A_j\cup\Sigma_{all}\cup\Sigma_{j}}(K)$, since $\Sigma_j=\Sigma_{k_i}$ and the synchronous product operation is idempotent. Together, we obtain that $K = \|_{i=1}^{m} \|_{A_j\in A_{k_i}/_{\sim}} P_{A_j\cup\Sigma_{all}\cup\Sigma_{j}}(K) = \|_{i=1}^{n} P_{A_i\cup\Sigma_{all}\cup\Sigma_{i}}(K)$. Finally, since $\Sigma_{all}\subseteq \Sigma_i$, for all $i$, $K$ is separable with respect to $(A_i\cup\Sigma_i)_{i=1}^{n}$.
  \end{proof}

  The procedure significantly depends on the computation and verification of conditional decomposability, which relies on the assumption that all shared events of the alphabets under (local) considerations are always included in $\Sigma_{all}$ (resp. $\Sigma_i$). This then allows us to use Theorem~\ref{thmgen} to check the property in polynomial time.

\section{Computation of Coobservable Sublanguages}\label{sec:comp}
  In this section, we discuss how to compute the languages $\R_i$ locally as sublanguages or superlanguages of $K_i$ so that they satisfy the sufficient conditions that make their synchronous product controllable, coobservable and included in $K$. 
  
  Consider the settings of decentralized control, and let $(\Sigma_i)_{i=1}^{n}$ be extensions of local alphabets $(A_{o,i})_{i=1}^{n}$ computed by the procedure {\sc Rcd} described in Section~\ref{sec:separability}, such that the specification $K$ is separable with respect to $(A_{o,i}\cup \Sigma_i)_{i=1}^{n}$. 
  We now apply results of modular control, where the status of an event is global. Namely, all shared events have the same status in all components where they appear. This is not in general the case in decentralized control. However, since every shared event appears in at least one $\Sigma_i$, the choice of $\Sigma_{uo,i}$ then ensures that the status of shared observable events is the same in all components where they appear. Recall that, for controllable events, we assume that if a supervisor observes a controllable event, then it can also control it. Formally, we assume that $A_{o,i} \cap A_c \subseteq A_{c,i}$, which must also hold after the extension, that is, $(A_{o,i}\cup \Sigma_{o,i}) \cap A_c \subseteq (A_{c,i} \cup \Sigma_{c,i})$, for $i=1,2,\ldots,n$. Therefore, we adapt the controllable status of events, if needed, to ensure this condition.
  
  Before we proceed, we summarize our assumptions and notation as Assumption~\ref{assumption1}. It allows us to keep the rest of the paper more concise by referring to Assumption~\ref{assumption1} rather than repeating the individual assumptions in the statements of theorems that follow.
  \begin{assumption}\label{assumption1}
    Let $(G,(A_{c,i})_{i=1}^{n},(A_{o,i})_{i=1}^{n})$ be a controlled generator over an alphabet $A$. Let $L=L(G)$, and let $K\subseteq L$ be a specification language over $A$. Let $(\Sigma_i)_{i=1}^{n}$ be extensions of local alphabets $(A_{o,i})_{i=1}^{n}$ computed by the procedure {\sc Rcd}, such that language $K$ is separable with respect to alphabets $(A_{o,i} \cup \Sigma_i)_{i=1}^{n}$, where the union of alphabets $A_{o,i} \cup \Sigma_i$ results in $A$, that is, $A = \bigcup_{i=1}^{n} (A_{o,i} \cup \Sigma_i)$. 

    We further assume that if a supervisor observes a controllable event, then it can also control it; namely, that $(A_{o,i}\cup \Sigma_{o,i}) \cap A_{c} \subseteq (A_{c,i} \cup \Sigma_{c,i})$, where $\Sigma_{o,i} = \Sigma_i \cap A_o$ are the controllable events of $\Sigma$ and $\Sigma_{uo,i} = \Sigma_i \setminus \Sigma_{o,i}$ are the uncontrollable events of $\Sigma$.

    For $i=1,2,\ldots,n$, let $P_{i+\Sigma_i}$ denote the projection from $A^*$ to $(A_{o,i}\cup\Sigma_i)^*$. Let $\R_i$ be languages that are controllable with respect to projection $P_{i+\Sigma_i}(L)$ of the plant language $L$ to alphabet $A_{o,i}\cup\Sigma_i$ and locally uncontrollable events $(A_{o,i}\cup \Sigma_i)_{uc} = (A_{o,i}\cup\Sigma_i)\cap A_{uc}$ such that their synchronous product $\R = \|_{i=1}^{n} \R_i$ is included in $K$. Furthermore, we assume that $\R_i$ are
    \begin{enumerate}
      \item either normal with respect to $P_{i+\Sigma_i}(L)$ and $A_{o,i}\cup\Sigma_{o,i}$,
      \item or observable with respect to $P_{i+\Sigma_i}(L)$ and $A_{o,i}\cup\Sigma_{o,i}$, and all controllable events are observable ($A_c \subseteq A_o$). \qed
    \end{enumerate}
  \end{assumption}
  
  Notice that normality implies observability~\cite{CL08}, hence every local language $\R_i$ is observable. However, if one of the local languages $\R_i$ is observable and not normal, then we require that all controllable events are observable.

\subsection{Main Result}
  We now state our main result showing how to use our framework to compute a controllable and coobservable sublanguage, and illustrate it on an example.
  
  An important feature of our computation is that we automatically obtain a coobservable sublanguage.
  \begin{theorem}\label{main}
    Consider Assumption~\ref{assumption1}. If the languages $\R_{i}$ are synchronously nonconflicting (in particular, if they are prefix-closed), then $\R = \|_{i=1}^{n} \R_{i}$ is a sublanguage of $K$ controllable with respect to $L$ and $A_{uc}$, and coobservable with respect to $L$ and $(A_{o,i}\cup\Sigma_{o,i})_{i=1}^{n}$.
  \end{theorem}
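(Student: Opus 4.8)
The statement asks for three things about $\R=\|_{i=1}^{n}\R_i$: that $\R\subseteq K$, that $\R$ is controllable with respect to $L$ and $A_{uc}$, and that $\R$ is coobservable with respect to $L$ and $(A_{o,i}\cup\Sigma_{o,i})_{i=1}^{n}$. The inclusion $\R\subseteq K$ is nothing to prove: Assumption~\ref{assumption1} builds the $\R_i$ precisely so that $\|_{i=1}^{n}\R_i\subseteq K$. So the proof splits cleanly into a controllability argument and a coobservability argument, each obtained by feeding the data of Assumption~\ref{assumption1} into a result established earlier.

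For controllability the plan is to argue first against the modular over-approximation of the plant and then pull the conclusion down to $L$. By Assumption~\ref{assumption1}, each $\R_i$ is controllable with respect to $P_{i+\Sigma_i}(L)$ and $(A_{o,i}\cup\Sigma_i)_{uc}$, and the $\R_i$ are synchronously nonconflicting; this last property holds in particular when they are prefix-closed, since a synchronous product of prefix-closed languages is prefix-closed and hence equal to its own closure. The modular controllability result recalled in Lemma~\ref{feng} then gives that $\R=\|_{i=1}^{n}\R_i$ is controllable with respect to $\|_{i=1}^{n}P_{i+\Sigma_i}(L)$ and $A_{uc}$. By Lemma~\ref{lemInclusion}, $L\subseteq\|_{i=1}^{n}P_{i+\Sigma_i}(L)$, and controllability passes to sub-plants: $\overline{\R}A_{uc}\cap L\subseteq\overline{\R}A_{uc}\cap\bigl(\|_{i=1}^{n}P_{i+\Sigma_i}(L)\bigr)\subseteq\overline{\R}$. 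Hence $\R$ is controllable with respect to $L$ and $A_{uc}$.

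For coobservability the plan is to apply Theorem~\ref{newthm} with $B_i=A_{o,i}\cup\Sigma_i$, so that $B=\bigcup_{i=1}^{n}B_i=A$, $B_{o,i}=B_i\cap A_o=A_{o,i}\cup\Sigma_{o,i}$, $B_c=A_c$, and the projection $P_i$ of Theorem~\ref{newthm} coincides with $P_{i+\Sigma_i}$. Its hypothesis $B_{o,i}\cap B_c\subseteq B_{c,i}$ is immediate, since $B_{o,i}\subseteq B_i$ gives $B_{o,i}\cap A_c\subseteq B_i\cap A_c=B_{c,i}$. Taking $M=\R$ and $M_i=\R_i$, we have $M\subseteq K\subseteq L$ with $L$ prefix-closed, and synchronous nonconflictingness yields $\overline{M}=\overline{\|_{i=1}^{n}\R_i}=\|_{i=1}^{n}\overline{\R_i}=\|_{i=1}^{n}\overline{M_i}$, as required. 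Finally, the two alternatives of Assumption~\ref{assumption1} — each $\R_i$ normal with respect to $P_{i+\Sigma_i}(L)$ and the projection onto $A_{o,i}\cup\Sigma_{o,i}$, respectively each $\R_i$ observable with respect to the same data together with $A_c\subseteq A_o$ — are exactly cases~(1) and~(2) of Theorem~\ref{newthm} under the identification above. Theorem~\ref{newthm} therefore delivers that $\R$ is coobservable with respect to $L$ and $(A_{o,i}\cup\Sigma_{o,i})_{i=1}^{n}$.

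The proof is thus an orchestration of Lemma~\ref{feng}, Lemma~\ref{lemInclusion} and Theorem~\ref{newthm}, and I do not expect a genuine obstacle; the work is bookkeeping, checking that the objects fixed in Assumption~\ref{assumption1} meet those hypotheses. The one step worth emphasizing is that controllability is first established only against the separable over-approximation $\|_{i=1}^{n}P_{i+\Sigma_i}(L)$ and must then be transferred to the true plant $L$ — this is precisely the reason the construction over-approximates $L$ by a modular plant, and it succeeds because controllability against a larger plant forces controllability against every smaller one.
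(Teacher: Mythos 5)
Your proof follows essentially the same route as the paper's: the inclusion in $K$ is by assumption, controllability comes from Lemma~\ref{feng} applied against the modular over-approximation $\|_{i=1}^{n}P_{i+\Sigma_i}(L)$ and is then transferred to $L\subseteq\|_{i=1}^{n}P_{i+\Sigma_i}(L)$, and coobservability comes from Theorem~\ref{newthm} with $B_i=A_{o,i}\cup\Sigma_i$ and $M_i=\R_i$. All of that matches the paper, which additionally records normality/observability of the product via Lemma~\ref{fengpo} (Lemma~\ref{obsComposition2}), a side fact not needed for the statement as you correctly observe.

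One step in your write-up is justified incorrectly, though the conclusion survives. You dismiss the hypothesis $B_{o,i}\cap B_c\subseteq B_{c,i}$ of Theorem~\ref{newthm} as ``immediate'' via the identification $B_{c,i}=B_i\cap A_c$. That identification misreads the notation: $B_{c,i}$ is the set of events that supervisor $i$ can actually control, namely $A_{c,i}\cup\Sigma_{c,i}$, not the controllable events that happen to lie in $B_i$. In decentralized control a supervisor may observe a controllable event without being able to control it, which is precisely why the paper devotes a discussion to this ``new condition'' after Theorem~\ref{newthm}; under your reading the hypothesis would be vacuous and that discussion pointless. The correct justification is that Assumption~\ref{assumption1} explicitly postulates $(A_{o,i}\cup\Sigma_{o,i})\cap A_c\subseteq A_{c,i}\cup\Sigma_{c,i}$ (the paper notes that the controllable status of events is adapted, if needed, to ensure it), and this is exactly the hypothesis you need. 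With that substitution your argument is complete and coincides with the paper's.
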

  \begin{proof}
    By definition, we have that $\R = \|_{i=1}^{n} \R_{i} \subseteq K$. By Lemmas~\ref{feng} and~\ref{fengpo} (Lemma~\ref{obsComposition2}) in the appendix, $\R$ is controllable and normal (observable) with respect to $\|_{i=1}^{n} P_{i+\Sigma_i}(L)$. Since $L \subseteq \|_{i=1}^{n} P_{i+\Sigma_i}(L)$, it is also controllable and normal (observable) with respect to $L$. Because $(A_{o,i}\cup\Sigma_{o,i}) \cap A_{c} \subseteq (A_{c,i}\cup\Sigma_{c,i})$ by the assumption and $\R_i$ are synchronously nonconflicting, that is, $\overline{\R} = \|_{i=1}^{n} \overline{\R_i}$, Theorem~\ref{newthm} implies that $\R = \|_{i=1}^{n} \R_{i}$ is coobservable with respect to $L$ and $(A_{o,i}\cup\Sigma_{o,i})_{i=1}^{n}$.
  \end{proof}

  We now illustrate our approach on a simple example.
  \begin{example}\label{example1}
    Consider the languages $K=\overline{\{aa,ba,bbd,abc\}}$ and $L=\overline{\{aac,abc,bac,bbd\}}$ over $A=\{a,b,c,d\}$, and alphabets $A_{o,1}=A_{c,1}=\{a,c\}$ and $A_{o,2}=A_{c,2}=\{b,d\}$. Then $K$ is not coobservable with respect to $L$ and $(A_{o,i})_{i=1}^{2}$, because none of the supervisors is able to distinguish between $ab$ and $ba$, where the continuation of $ba$ by $c$ within the plant leads outside the specification while   the continuation of $ab$ by $c$ remains within the specification. 

    We compute the extensions $\Sigma_1=\Sigma_2\supseteq A_{o,1}\cap A_{o,2}$ by the procedure {\sc Rcd} such that $K$ is separable with respect to $A_{o,1}\cup\Sigma_1$ and $A_{o,2}\cup\Sigma_2$. It is sufficient to take $\Sigma_1=\Sigma_2=\{b\}$, which needs to be communicated/observed by both supervisors. Since our system is with complete observations, we may compute
      $\R_1 = \overline{\{aa,abc,ba,bb\}}$ and
      $\R_2 = \overline{\{bbd\}}$ 
    as the supremal controllable sublanguages of $P_{i+\Sigma_i}(K)$ with respect to $P_{i+\Sigma_i}(L)$ and $(A_{o,i}\cup\Sigma_i)_{uc}$.
    By Theorem~\ref{main}, we have that $\R_1 \parallel \R_2$ is coobservable with respect to $L$ and the extended alphabets $\{a,b,c\}$ and $\{b,d\}$. Indeed, supervisor $S_1$ that exerts the control power over the event $c$ is now able to distinguish between the words $ab$, after which $c$ should be allowed, and $ba$, after which $c$ should be disabled.\qed
  \end{example}

\subsection{Construction of the Languages $\R_i$}\label{subsec:Ri}
  There are many ways how to compute the languages $\R_i$ discussed in the literature. In the case of full local observations, it is natural to define the language
  \begin{align}\label{supC}
      \R_{i} & = \supc_{i+\Sigma_i} = \supc(P_{i+\Sigma_i}(K), P_{i+\Sigma_i}(L), (A_{o,i}\cup\Sigma_i)_{uc})
  \end{align}
  as the supremal controllable sublanguage of $P_{i+\Sigma_i}(K)$ with respect to $P_{i+\Sigma_i}(L)$ and uncontrollable events $(A_{o,i}\cup\Sigma_i)_{uc}$. In the case of partial observations, we may define the language
  \begin{align*}
      \R_{i} & = \supcn(P_{i+\Sigma_i}(K), P_{i+\Sigma_i}(L), (A_{o,i}\cup\Sigma_i)_{uc}, A_{o,i}\cup\Sigma_{o,i})
  \end{align*}
  as the supremal controllable and normal sublanguage of $P_{i+\Sigma_i}(K)$ with respect to $P_{i+\Sigma_i}(L)$, $(A_{o,i}\cup\Sigma_i)_{uc}$ and $A_{o,i}\cup\Sigma_{o,i}$~\cite{CL08,brandt}. Similarly, if $A_c\subseteq A_o$, we can define $\R_i$ as the supremal controllable and relatively observable sublanguage~\cite{CaiZW15,KomendaMS14a}, or we can use any of the methods to compute a controllable and observable sublanguage discussed in the literature~\cite{FA199311,TakaiU03,YinL16}. 
  In these cases, we have that $\R_i \subseteq P_{i+\Sigma_i}(K)$, and separability of $K$ then implies that the synchronous product $\|_{i=1}^{n} \R_i$ is included in $K$ as required in Assumption~\ref{assumption1}. 
  
  However, we do not restrict language $\R_i$ to be included in $P_{i+\Sigma_i}(K)$. This allows us to define $\R_i$ in many different ways. For instance, we can define $\R_i$ as the infimal controllable (and normal/observable) superlanguage of $P_{i+\Sigma_i}(K)$ with respect to $P_{i+\Sigma_i}(L)$ and $(A_{o,i}\cup\Sigma_i)_{uc}$ (and $A_{o,i}\cup\Sigma_{o,i}$) as discussed in the literature~\cite{CL08,Kumar1995OSC,LafortuneChen1990,Rudie1990IPO}. We can further combine the approaches so that one of the $\R_i$ can be computed as a sublanguage and another one as a superlanguage, etc. In such a case, we do not get the assumption $\|_{i=1}^{n} \R_i \subseteq K$ by construction, but we need to check it. It is in general a PSPACE-complete problem.\footnote{There is a simple reduction from the finite-state automata intersection problem: Given a set of deterministic finite automata $\{G_i\}_{i=1}^{n}$ over a common alphabet $B$, is $\bigcap_{i=1}^{n} L_m(G_i)=\emptyset$? The problem is PSPACE-complete~\cite{Kozen77}. Let $A\cap B = \emptyset$. It is not hard to see that $\bigcap_{i=1}^{n} L_m(G_i)=\emptyset$ if and only if $\bigcap_{i=1}^{n} (L_m(G_i)\cup K) \subseteq K$. The reduction is polynomial since every $L_m(G_i)\cup K$ is represented by a generator computed from $G_i$ and the generator for $K$ in polynomial time by the standard product construction.} On the other hand, the advantage it brings is a potentially better (larger) solution as illustrated in Example~\ref{example4}.

  \begin{example}\label{example4}
    Let $L=\overline{\{ab,ba,bdau,dbau\}}$ be a language over $A=\{a,b,d,u\}$, and consider the alphabets $A_{o,1}=\{a,u\}$, $A_{c,1}=\{a,d\}$, $A_{o,2}=\{b,u\}$, $A_{c,2}=\{b\}$. Let $K=\overline{\{ab,ba,bd,db\}}$ be a specification. Then $K$ is not coobservable with respect to $L$ and $(A_{o,i})_{i=1}^{2}$ because, for $db\in K$, we have $a\in A_c$, $dba\in L\setminus K$, and $P_1(db)a=a\in K$ and $P_2(db)a=ba\in K$. Thus, none of the supervisors can disable $a$ after the word $db$.
    
    First, we compute only sublanguages. Let $\Sigma_1=\Sigma_2 = \{d,u\}$ be the extensions of local observations computed by {\sc Rcd}. Then $K$ is separable with respect to $(A_{o,i}\cup\Sigma_i)_{i=1}^{2}$. Notice that $P_{1+\Sigma_1}(K)=\{a,d,\eps\}$ and $P_{2+\Sigma_2}(K) = \overline{\{bd,db\}}$, and that $P_{1+\Sigma_1}(L) = \overline{\{a,dau\}}$ and $P_{2+\Sigma_2}(L) = \overline{\{bdu,dbu\}}$. Then $\R_1 = \supc_{1+\Sigma_1} = P_{1+\Sigma_1}(K)$ and $\R_2 = \supc_{2+\Sigma_2}=\{b,d,\eps\}$, and the solution $\R_1 \parallel \R_2 \subsetneq K$ gives us a strict subset of $K$.

    On the other hand, we can obtain the whole $K$ if we consider infimal controllable superlanguages of $P_{i+\Sigma_i}(K)$ instead of supremal controllable sublanguages. Then we obtain the infimal controllable superlanguages $\R_1 = P_{1+\Sigma}(K) = \supc_{1+\Sigma_1}$ and $\R_2 = P_{2+\Sigma}(K)$, and the solution $\R_1 \parallel \R_2 = K$ then gives us the whole specification as the resulting language.
  \qed\end{example}

  Another problem with infimal controllable superlanguages is that they do not exist for general languages, but only for prefix-closed languages. This issue can be avoided by the following choice of $\R_i$ based on the computation of prefix-closed superlanguages.

  \begin{lemma}
    Consider Assumption~\ref{assumption1}. Let $T_i$ be the prefix-closed infimal controllable (and normal/observable) superlanguage of $P_{i+\Sigma_i}(K)$ with respect to $P_{i+\Sigma_i}(L)$ and $(A_{o,i}\cup\Sigma_i)_{uc}$ (and $A_{o,i}\cup\Sigma_{o,i}$). Then $\R_{i} = P_{i+\Sigma_i}(K) \cup [T_{i} \setminus \overline{P_{i+\Sigma_i}(K)}]$ is controllable (and normal/observable) with respect to $P_{i+\Sigma_i}(L)$ and $(A_{o,i}\cup\Sigma_i)_{uc}$ (and $A_{o,i}\cup\Sigma_{o,i}$).
  \end{lemma}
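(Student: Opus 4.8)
The plan is to reduce the whole statement to one structural identity: the prefix closure of $\R_i$ equals $T_i$. Write $M=P_{i+\Sigma_i}(K)$, $N=P_{i+\Sigma_i}(L)$, let $P_o$ be the projection onto $A_{o,i}\cup\Sigma_{o,i}$, and let $\Gamma=(A_{o,i}\cup\Sigma_i)_{uc}$. First I would collect the easy facts. The language $N$ is prefix-closed, being the projection of the generated (hence prefix-closed) language $L$. By hypothesis $T_i$ is prefix-closed and is a superlanguage of $M$, so $\overline{M}\subseteq T_i$; moreover $T_i\subseteq N$, since the infimal controllable (and, in the respective cases, normal or observable) superlanguage is taken inside the plant $N$. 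Consequently $\R_i=M\cup[T_i\setminus\overline{M}]\subseteq T_i\subseteq N$, so that controllability, normality and observability of $\R_i$ relative to $N$ (and $\Gamma$, $P_o$) are well-posed notions.

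Next I would prove $\overline{\R_i}=T_i$. The inclusion $\overline{\R_i}\subseteq\overline{T_i}=T_i$ is immediate from $\R_i\subseteq T_i$. For the reverse, take $w\in T_i$. If $w\in\overline{M}$, then $w$ is a prefix of some word of $M\subseteq\R_i$, hence $w\in\overline{\R_i}$. If $w\notin\overline{M}$, then $w\in T_i\setminus\overline{M}\subseteq\R_i\subseteq\overline{\R_i}$. Either way $w\in\overline{\R_i}$, so $T_i\subseteq\overline{\R_i}$ and equality holds.

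Finally I would invoke the fact that controllability, normality and observability of a language depend only on its prefix closure: each of the defining conditions, namely $\overline{K}A_{uc}\cap L\subseteq\overline{K}$ for controllability, $\overline{K}=P_o^{-1}P_o(\overline{K})\cap L$ for normality, and the observability implication, refers to $K$ only through $\overline{K}$. Since $\overline{\R_i}=T_i$ and $T_i$ is controllable (and, according to the case of Assumption~\ref{assumption1}, normal or observable) with respect to $N$ and $\Gamma$ (and $P_o$) by construction, these properties transfer verbatim to $\R_i$, which finishes the proof. There is essentially no real obstacle; the only point requiring a little care is the identity $\overline{\R_i}=T_i$, in particular verifying that adjoining $T_i\setminus\overline{M}$ to $M$ neither adds nor removes prefixes in a way that would break the equality (handled by the case split on membership in $\overline{M}$ above, and consistent with the fact that, $\overline{M}$ being prefix-closed, a word of $T_i\setminus\overline{M}$ can only have prefixes that are again outside $\overline{M}$ once they leave it), together with the small but necessary remark that $T_i\subseteq N$, without which the conclusion would not even be well-posed.
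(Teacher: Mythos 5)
Your proof is correct and follows essentially the same route as the paper: both establish the key identity $\overline{\R_i}=T_i$ by the same case split on membership in $\overline{P_{i+\Sigma_i}(K)}$, and then conclude because controllability, normality and observability depend only on the prefix closure. Your write-up is merely more explicit about the well-posedness remark $T_i\subseteq P_{i+\Sigma_i}(L)$ and the final transfer step, which the paper leaves implicit.
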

  \begin{proof}
    We show that $\overline{\R_i} = T_i$. 
    Since $P_{i+\Sigma_i}(K)\subseteq T_{i}$ and $\R_{i} \subseteq T_{i}$, we have that $\overline{\R_{i}} \subseteq T_{i}$. To show that $T_{i}\subseteq \overline{\R_{i}}$, let $w\in T_{i}$. If $w\notin \overline{P_{i+\Sigma_i}(K)}$, then $w\in \R_{i}$ by definition. If $w\in \overline{P_{i+\Sigma_i}(K)}$, then there exists $v$ such that $wv\in P_{i+\Sigma_i}(K) \subseteq \R_{i}$, hence $w \in \overline{\R_{i}}$.
  \end{proof}

\subsection{Conditions for Optimality for Full Observations}
  In this subsection, we discuss conditions under which the solution of Theorem~\ref{main} is optimal in the sense of maximal permissiveness. That is, under which conditions the solution coincides with the supremal centralized supervisor, if it exists. Since no centralized optimal solution exists in the case of partial observations, we restrict our attention in this subsection only to the case of full observations, that is, we assume in this section that $A_{uo}=\emptyset$. However, we point out that a similar result to Proposition~\ref{optfo} can be obtain using the notions of {\em mutual normality}~\cite[Theorem~4.23]{KomendaS08} or {\em mutual observability}~\cite[Theorem~5.1]{Komenda200597} in the case the set $A_{uo}$ is nonempty. Theorem~\ref{optfoc} below can then be modified in the corresponding way.
  
  For $i=1,\ldots,n$, let a language $L_i$ over $A_i$ be prefix-closed. Languages $(L_i)_{i=1}^{n}$ are {\em mutually controllable\/} if $L_j(A_{i,u}\cap A_j)\cap P_jP_i^{-1}(L_i)\subseteq L_j$, for $i,j=1,2,\ldots,n$~\cite{LW02}. The following compatibility between supremal controllable sublanguages and the synchronous composition operator is known~\cite{LW02}. We state the result only for prefix-closed languages and refer the reader to Lee and Wong~\cite{LW02} for the conditions on general languages.

  \begin{proposition}\label{optfo}
    Assume that $A_{o,i}\cap A_{c}\subseteq A_{c,i}$. If the prefix-closed languages $L_i\subseteq A_i^*$ are mutually controllable, then
    $
      \|_{i=1}^n \supc (P_i(K),P_i(L),A_{i,u})= \supc (\|_{i=1}^n P_i(K),\|_{i=1}^n P_i(L), A_{uc})
    $
    holds true for any prefix-closed language $K\subseteq L$.
  \end{proposition}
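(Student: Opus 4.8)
The plan is to establish the identity by a double inclusion between $\|_{i=1}^{n} M_i$ and $S$, where I abbreviate $M_i=\supc(P_i(K),P_i(L),A_{i,u})$ and $S=\supc(\|_{i=1}^{n}P_i(K),\|_{i=1}^{n}P_i(L),A_{uc})$. First I would collect the structural facts the argument leans on: since $K\subseteq L$ are prefix-closed, so are $P_i(K)$, $P_i(L)$, $\|_{i=1}^{n}P_i(K)$ and $\|_{i=1}^{n}P_i(L)$, and hence (the supremal controllable sublanguage of a prefix-closed language being prefix-closed) so are $M_i$ and $S$; and a shared event has the same uncontrollable status in every component because of the convention $X_{uc}=X\cap A_{uc}$, so $A_{i,u}\cap A_j=A_{j,u}\cap A_i$. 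I would also remark that the hypothesis $A_{o,i}\cap A_c\subseteq A_{c,i}$ is inherited from the ambient Assumption~\ref{assumption1} and plays no role in this particular identity (it matters only once observation enters).

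For the inclusion $\|_{i=1}^{n}M_i\subseteq S$: each $M_i$ is controllable with respect to $P_i(L)$ and satisfies $M_i\subseteq P_i(K)$, so since the plant projections $(P_i(L))_{i=1}^{n}$ are mutually controllable, the Feng--Wonham-type lemma on synchronous products (Lemma~\ref{feng}) gives that $\|_{i=1}^{n}M_i$ is controllable with respect to $\|_{i=1}^{n}P_i(L)$ and $A_{uc}$; as it is moreover contained in $\|_{i=1}^{n}P_i(K)$, maximality of $S$ yields $\|_{i=1}^{n}M_i\subseteq S$.

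The reverse inclusion $S\subseteq\|_{i=1}^{n}M_i$ is the substantive part. The reduction is routine: $S\subseteq\|_{i=1}^{n}P_i(K)$ forces $P_i(S)\subseteq P_i(K)$, so it suffices to show that each $P_i(S)$ is controllable with respect to $P_i(L)$ and $A_{i,u}$; granting that, $P_i(S)\subseteq M_i$ by maximality of $M_i$, and then $S\subseteq\bigcap_{i=1}^{n}P_i^{-1}(P_i(S))=\|_{i=1}^{n}P_i(S)\subseteq\|_{i=1}^{n}M_i$. To prove that controllability I would fix $i$, take $t\in P_i(S)$ (note $P_i(S)$ is prefix-closed) and $\sigma\in A_{i,u}$ with $t\sigma\in P_i(L)$, pick $w\in S$ with $P_i(w)=t$, and aim to show $w\sigma\in S$; since $\sigma\in A_{i,u}\subseteq A_{uc}$ and $S$ is controllable with respect to $\|_{j=1}^{n}P_j(L)$, this reduces to checking $w\sigma\in\|_{j=1}^{n}P_j(L)$, i.e.\ $P_j(w)P_j(\sigma)\in P_j(L)$ for every $j$. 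When $\sigma\notin A_j$ this is just $P_j(w)\in P_j(L)$, which holds because $w\in S$. When $\sigma\in A_j$ we have $\sigma\in A_{i,u}\cap A_j$, and from $P_i(w\sigma)=t\sigma\in P_i(L)$ we get $w\sigma\in P_i^{-1}(P_i(L))$, hence $P_j(w)\sigma=P_j(w\sigma)\in P_jP_i^{-1}(P_i(L))$; together with $P_j(w)\in P_j(L)$, mutual controllability of $(P_j(L))_{j=1}^{n}$ yields $P_j(w)\sigma\in P_j(L)$, as needed.

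I expect the only delicate step to be this last bit of bookkeeping with the projections — aligning the word $w\in S$ with the local one-letter extension $t\sigma$ and pushing it through $P_jP_i^{-1}$ so that mutual controllability becomes applicable; everything else is definition-chasing. As a cross-check one may also simply invoke Lee and Wong~\cite{LW02}, of which the above is the prefix-closed special case.
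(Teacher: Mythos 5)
Your proof is correct. Note, though, that the paper does not prove Proposition~\ref{optfo} at all: it imports the identity from Lee and Wong~\cite{LW02} and merely restates the prefix-closed case, so your double-inclusion argument is a self-contained replacement for that citation. It is also the standard one: the easy inclusion follows from Lemma~\ref{feng} plus supremality of the right-hand side (mutual controllability is actually not needed there --- only synchronous nonconflictingness of the local supremal sublanguages, which prefix-closedness already gives you), and the substantive inclusion reduces to showing that each $P_i(S)$ is controllable with respect to $P_i(L)$ and $A_{i,u}$, which is exactly where mutual controllability enters, in the form $P_j(L)(A_{i,u}\cap A_j)\cap P_jP_i^{-1}(P_i(L))\subseteq P_j(L)$; your bookkeeping with $w\in S$, $P_i(w)=t$ and the case split on whether $\sigma\in A_j$ is sound. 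Two small remarks. First, the hypothesis $A_{o,i}\cap A_c\subseteq A_{c,i}$ is not quite as inert as you suggest: under full local observation it is precisely what forces the \emph{local} uncontrollable alphabet $A_{i,u}=A_i\setminus A_{c,i}$ to coincide with $A_i\cap A_{uc}$, i.e., it gives every shared event a consistent controllability status across components --- a fact you use tacitly when you write $\sigma\in A_{i,u}\subseteq A_{uc}$ and when you invoke Lemma~\ref{feng}. Second, the statement's ``$L_i$ mutually controllable'' should indeed be read as $L_i=P_i(L)$, as you do; this is consistent with how the proposition is applied in Theorem~\ref{optfoc}.
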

  
  It may be that $P_{i+\Sigma_i}(L)$ and $P_{j+\Sigma_j}(L)$ are mutually controllable for fairly small alphabets $\Sigma_i$ and $\Sigma_j$. However, if we do not require that $K$ is separable, we cannot guarantee that the resulting supremal controllable sublanguage is included in $K$. This is the main issue with the approach in Komenda et al.~\cite{KMP06}. Therefore, in this paper, we compute the communications $(\Sigma_i)_{i=1}^{n}$ using the procedure {\sc Rcd}, such that $K$ is separable with respect to $(A_{o,i}\cup\Sigma_i)_{i=1}^{n}$, to ensure the inclusion of the resulting supremal controllable sublanguage in $K$. In addition, if $L$ is also separable with respect to $(A_{o,i}\cup\Sigma_i)_{i=1}^{n}$, which can again be ensured in the same way as for $K$, we obtain the optimal centralized solution.
  
  \begin{theorem}\label{optfoc}
    Consider Assumption~\ref{assumption1}. Let $K\subseteq L$ be prefix-closed languages, and let $K$ and $L$ be separable with respect to $(A_{o,i}\cup\Sigma_i)_{i=1}^{n}$. Let $\supc_{i+\Sigma_i}$ be defined by Equation~(\ref{supC}) above. If $P_{i+\Sigma_i}(L)$ and $P_{j+\Sigma_j}(L)$ are mutually controllable, for $i,j=1,2,\ldots,n$, then
    $
      \|_{i=1}^{n} \supc_{i+\Sigma_i} = \supc(K, L, A_{uc})
    $
    is coobservable with respect to $L$ and $(A_{o,i}\cup\Sigma_{i})_{i=1}^{n}$. 
  \end{theorem}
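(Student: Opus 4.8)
The plan is to derive the equality $\|_{i=1}^{n}\supc_{i+\Sigma_i}=\supc(K,L,A_{uc})$ from Proposition~\ref{optfo} applied to the extended alphabets $(A_{o,i}\cup\Sigma_i)_{i=1}^{n}$, and then to obtain coobservability from Theorem~\ref{main}.

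First I would verify the hypotheses of Proposition~\ref{optfo} with $A_i$ replaced by $A_{o,i}\cup\Sigma_i$ and $P_i$ by $P_{i+\Sigma_i}$. The inclusion $A_{o,i}\cap A_c\subseteq A_{c,i}$ required there, transported to the extended alphabets, becomes the hypothesis $(A_{o,i}\cup\Sigma_{o,i})\cap A_c\subseteq(A_{c,i}\cup\Sigma_{c,i})$ of Assumption~\ref{assumption1}, which holds after the controllable-status adaptation built into that assumption; mutual controllability of the plant projections $P_{i+\Sigma_i}(L)$ and $P_{j+\Sigma_j}(L)$ is a hypothesis of the theorem. Proposition~\ref{optfo} then yields
\[
  \|_{i=1}^{n}\supc(P_{i+\Sigma_i}(K),P_{i+\Sigma_i}(L),(A_{o,i}\cup\Sigma_i)_{uc})=\supc(\|_{i=1}^{n}P_{i+\Sigma_i}(K),\|_{i=1}^{n}P_{i+\Sigma_i}(L),A_{uc}).
\]
The left-hand side is $\|_{i=1}^{n}\supc_{i+\Sigma_i}$ by Equation~(\ref{supC}); and since $K$ and $L$ are both separable with respect to $(A_{o,i}\cup\Sigma_i)_{i=1}^{n}$, we have $\|_{i=1}^{n}P_{i+\Sigma_i}(K)=K$ and $\|_{i=1}^{n}P_{i+\Sigma_i}(L)=L$, so the right-hand side equals $\supc(K,L,A_{uc})$. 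This proves the identity.

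For coobservability I would check that the choice $\R_i:=\supc_{i+\Sigma_i}$ satisfies Assumption~\ref{assumption1} together with the extra hypothesis of Theorem~\ref{main}. Each $\R_i$ is controllable with respect to $P_{i+\Sigma_i}(L)$ and $(A_{o,i}\cup\Sigma_i)_{uc}$ by construction, and $\R=\|_{i=1}^{n}\R_i\subseteq K$ since $\supc_{i+\Sigma_i}\subseteq P_{i+\Sigma_i}(K)$ and $K$ is separable. Since $A_{uo}=\emptyset$ in this subsection, $A_o=A$, hence $\Sigma_{o,i}=\Sigma_i$ and the projection onto $A_{o,i}\cup\Sigma_{o,i}$ is the identity on $(A_{o,i}\cup\Sigma_i)^{*}$; thus each $\R_i$ is trivially normal with respect to $P_{i+\Sigma_i}(L)$, meeting case~1 of Assumption~\ref{assumption1}. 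Moreover $K\subseteq L$ prefix-closed makes $P_{i+\Sigma_i}(K)$ and $P_{i+\Sigma_i}(L)$ prefix-closed, so $\supc_{i+\Sigma_i}$ is prefix-closed and the family $(\R_i)_{i=1}^{n}$ is synchronously nonconflicting. Theorem~\ref{main} then gives that $\R=\|_{i=1}^{n}\R_i$ is coobservable with respect to $L$ and $(A_{o,i}\cup\Sigma_{o,i})_{i=1}^{n}=(A_{o,i}\cup\Sigma_i)_{i=1}^{n}$.

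The main obstacle is not the mathematics but the alphabet bookkeeping: one must make sure that the relativized uncontrollable sets $(A_{o,i}\cup\Sigma_i)\cap A_{uc}$ appearing in the local $\supc$ operators, in the conclusion of Proposition~\ref{optfo}, and in the mutual-controllability hypothesis are literally the same, and that the property ``a supervisor that observes a controllable event can control it'' survives the extension --- which is exactly what the controllable-status adaptation preceding Assumption~\ref{assumption1} is arranged to ensure. Beyond that, the proof is a direct combination of Proposition~\ref{optfo}, separability of $K$ and $L$, and Theorem~\ref{main}.
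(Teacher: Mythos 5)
Your proposal is correct and follows essentially the same route as the paper: inclusion in $K$ from $\supc_{i+\Sigma_i}\subseteq P_{i+\Sigma_i}(K)$ and separability of $K$, coobservability from Theorem~\ref{main} (with prefix-closedness giving nonconflictingness and full observation making normality trivial), and the identity with $\supc(K,L,A_{uc})$ from Proposition~\ref{optfo} together with separability of $L$. Your explicit bookkeeping of the hypotheses of Proposition~\ref{optfo} over the extended alphabets is a welcome addition but does not change the argument.
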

  \begin{proof}
    Since $\supc_{i+\Sigma_i}\subseteq P_{i+\Sigma_i}(K)$, $\|_{i=1}^{n} \supc_{i+\Sigma_i} \subseteq \|_{i=1}^{n} P_{i+\Sigma_i}(K) = K$ by separability of $K$. By Theorem~\ref{main}, $\|_{i=1}^{n} \supc_{i+\Sigma_i}$ is controllable with respect to $\|_{i=1}^{n}P_{i+\Sigma_i}(L) = L$ and $A_{uc}$, hence coobservable with respect to $L$ and $(A_{o,i}\cup\Sigma_{i})_{i=1}^{n}$ by Theorem~\ref{newthm}. Finally, by Proposition~\ref{optfo}, $\|_{i=1}^{n} \supc_{i+\Sigma_i} = \supc (K,L,A_{uc})$. 
  \end{proof}

  Whether mutual controllability can be fulfilled or not depends on the system. However, mutual controllability holds if all shared events are controllable. For instance, in Example~\ref{example1}, the only shared event between $A_{o,1} \cup \Sigma_1$ and $A_{o,2} \cup \Sigma_2$ is event $b$, which is controllable. Therefore, the languages $P_{1+\Sigma_1}(L)$ and $P_{2+\Sigma_2}(L)$ are mutually controllable, and Theorem~\ref{optfoc} implies that the parallel composition $\R_{1} \parallel \R_{2}$ coincides with the optimal monolithic solution $\supc(K,L,A_{uc})$, and is coobservable with respect to $L$ and the extended alphabets $\{a,b,c\}$ and $\{b,d\}$. 

  Depending on the system, mutual controllability may be a strong condition. We now present a result that ensures optimality and is based on the notions of an $L$-observer and local control consistency (LCC).
  
  A projection $P_k\colon A^* \to A_k^*$, for $A_k\subseteq A$, is an {\em $L$-observer\/} for a language $L\subseteq A^*$ if for all $s\in \overline{L}$, if $P_k(s)t\in P_k(L)$, then there exists $u\in A^*$ such that $su\in L$ and $P_k(u)=t$~\cite{wong98,pcl12}.
  The co-domain of a projection can always be extended to fulfill the condition. Although to compute the minimal extension is NP-hard, there is a polynomial-time algorithm to find an acceptable extension~\cite{FengWonham,pena2010}. The property also prevents the state explosion when computing projections. If $P$ is an $L$-observer, then the generator for $P(L)$ is not larger (usually much smaller) than the one for $L$.
  
  Let $L$ be a prefix-closed language over $A$, and let $A_k \subseteq A$. Projection $P_k\colon A^* \to A_k^*$ is {\em locally control consistent\/} (LCC) with respect to a word $s\in L$ if for all events $a_u\in A_k \cap A_{uc}$ such that $P_k(s) a_u\in P_k(L)$, it holds that either there does not exist any word $u\in (A \setminus A_k)^*$ such that $su a_u \in L$, or there exists a word $u \in (A_{uc} \setminus A_k)^*$ such that $su a_u \in L$. Projection $P_k$ is LCC with respect to $L$ if $P_k$ is LCC for all words of $L$~\cite{SB11,SB08}. Notice that LCC is a weaker condition than OCC defined in Zhong and Wonham~\cite{WZ91}.
  
  \begin{theorem}\label{thm_inc_opt_sol}
    Consider Assumption~\ref{assumption1}. If every projection $P_{i+\Sigma_i}$ is an $L$-observer and LCC for $L$, and the languages $\supc_{i+\Sigma_i}$ that are defined in (\ref{supC}) are synchronously nonconflicting (e.g., prefix-closed), then the language $\parallel_{i=1}^{n} \supc_{i+\Sigma_i} = \supc(K, L, A_{uc})$ is coobservable with respect to $L$ and $(A_{o,i}\cup\Sigma_{o,i})_{i=1}^{n}$.
  \end{theorem}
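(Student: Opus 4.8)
The plan is to derive the statement from Theorem~\ref{main} together with the classical modular-control fact that a projection which is an $L$-observer and LCC preserves controllability. First I would check that the languages $\supc_{i+\Sigma_i}$ of Equation~(\ref{supC}) satisfy Assumption~\ref{assumption1} in the role of the $\R_i$: each $\supc_{i+\Sigma_i}$ is by definition controllable with respect to $P_{i+\Sigma_i}(L)$ and $(A_{o,i}\cup\Sigma_i)_{uc}$; since we are in the full-observation case ($A_{uo}=\emptyset$) we have $A_{o,i}\cup\Sigma_{o,i}=A_{o,i}\cup\Sigma_i$, so the projection onto the ``observable'' part is the identity on $(A_{o,i}\cup\Sigma_i)^*$ and item~2 of the assumption (observability, plus $A_c\subseteq A_o$) holds trivially; and $\supc_{i+\Sigma_i}\subseteq P_{i+\Sigma_i}(K)$ together with separability of $K$ gives $\|_{i=1}^{n}\supc_{i+\Sigma_i}\subseteq K$. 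Then Theorem~\ref{main} (through Lemma~\ref{feng} and Theorem~\ref{newthm}, using the synchronous nonconflictingness hypothesis) already yields that $\R:=\|_{i=1}^{n}\supc_{i+\Sigma_i}$ is a sublanguage of $K$, controllable with respect to $L$ and $A_{uc}$, and coobservable with respect to $L$ and $(A_{o,i}\cup\Sigma_{o,i})_{i=1}^{n}$, so the only thing left to prove is the optimality equality $\R=\supc(K,L,A_{uc})$.

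The inclusion $\R\subseteq\supc(K,L,A_{uc})$ is immediate from the previous paragraph, since $\R$ is a controllable sublanguage of $K$ with respect to $L$. For the converse, put $M=\supc(K,L,A_{uc})$. The standard inclusion for a parallel composition of projections gives $M\subseteq\|_{i=1}^{n}P_{i+\Sigma_i}(M)$, using $A=\bigcup_{i=1}^{n}(A_{o,i}\cup\Sigma_i)$ from Assumption~\ref{assumption1}, so it suffices to show $P_{i+\Sigma_i}(M)\subseteq\supc_{i+\Sigma_i}$ for each $i$. From $M\subseteq K$ we get $P_{i+\Sigma_i}(M)\subseteq P_{i+\Sigma_i}(K)$, so by maximality of $\supc_{i+\Sigma_i}$ it is enough to check that $P_{i+\Sigma_i}(M)$ is controllable with respect to $P_{i+\Sigma_i}(L)$ and $(A_{o,i}\cup\Sigma_i)_{uc}$. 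This is exactly where the hypotheses enter: $M$ is controllable with respect to $L$ and $A_{uc}$, and $P_{i+\Sigma_i}$ is an $L$-observer and LCC for $L$, so the result that an $L$-observer which is LCC preserves controllability under projection applies and delivers the claim. Combining the two inclusions gives $\R=\supc(K,L,A_{uc})$.

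The main obstacle is precisely this controllability-preservation step: I must invoke (or, if it is not already available as a cited lemma, state and prove it in the appendix, along the lines of~\cite{SB11,SB08}) the statement that if $N\subseteq L$ is controllable with respect to $L$ and $A_{uc}$ and $P_k\colon A^*\to A_k^*$ is an $L$-observer that is LCC for $L$, then $P_k(N)$ is controllable with respect to $P_k(L)$ and $A_k\cap A_{uc}$; intuitively the observer property ensures that $P_k(L)$ faithfully captures reachability, while LCC ensures that an uncontrollable continuation available in $P_k(L)$ can be matched by an uncontrollable string of $L$. One point worth emphasizing is that, in contrast to Theorem~\ref{optfoc}, here $L$ is \emph{not} assumed separable, so $\|_{i=1}^{n}P_{i+\Sigma_i}(L)$ may be strictly larger than $L$; the argument sidesteps this by projecting $M$ directly rather than routing through the over-approximated plant, and the synchronous nonconflictingness of the $\supc_{i+\Sigma_i}$ is used only to apply Theorem~\ref{main}.
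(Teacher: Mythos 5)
Your proof is correct and takes essentially the same route as the paper: the paper's own proof simply cites the literature~\cite{FLT,SB11} for the identity $\|_{i=1}^{n}\supc_{i+\Sigma_i}=\supc(K,L,A_{uc})$ and then invokes Theorem~\ref{newthm} (using $A_{uo}=\emptyset$) for coobservability. The only difference is that you re-derive the cited identity in-line; your observer-plus-LCC controllability-preservation step is precisely the argument the paper itself spells out later, in the proof of the optimality theorem for conflicting supervisors, when it shows $P_{i+\Sigma_i}(\supc)\subseteq\R_{i}$.
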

  \begin{proof}
    The identity $\|_{i = 1}^n \supc_{i+\Sigma_i} = \supc(K,L,A_{uc})$ has been shown in the literature~\cite{FLT,SB11}. Since $A_{uo} = \emptyset$, Theorem~\ref{newthm} finishes the proof.
  \end{proof}

  Note that both properties $L$-observer and LCC can be ensured by further extending the alphabets $\Sigma_i$ in polynomial time.
  
  A similar result to Theorem~\ref{thm_inc_opt_sol} can be obtained for supremal controllable and normal sublanguages. A condition under which the parallel composition of local supremal controllable and normal languages equals to the global supremal controllable and normal sublanguage can be found in~\cite[Theorem~25]{scl2011}.

\subsection{Conflicting Supervisors}
  So far, the theorems require that the local languages $\R_i$ are synchronously nonconflicting. The remaining question is thus the case of conflicting local supervisors. It is in general a PSPACE-complete problem to decide whether a parallel composition (of an unspecified number) of generators is nonblocking~\cite{rohloff}. However, Malik~\cite{Malikwodes2016} shows that current computers can explore more than 100 million of states using explicit algorithms without any optimization techniques. Moreover, it is possible to use an $L$-observer to alleviate the computational effort as used in the following construction.

  \begin{theorem}\label{thm22}
    Consider Assumption~\ref{assumption1}. Let $L_C \subseteq \|_{i=1}^{n} P_{\Sigma'}(\R_{i})$ be a language that is controllable and normal (observable) with respect to $P_{\Sigma'}(L)$, $\Sigma'_{uc}$ and $\Sigma'_{o}$, where $\Sigma' \subseteq A$ contains all events shared by any pair of $\R_i$ and $\R_j$, for $i\neq j$, and $P_{\Sigma'}\colon A^*\to \Sigma'^*$ is an $\R_{i}$-observer, for $i=1,\ldots,n$. If $(A_{o,i}\cup\Sigma_{o,i}\cup\Sigma'_{o}) \cap A_{c} \subseteq (A_{c,i} \cup \Sigma_{c,i}\cup \Sigma'_{c})$, then
    $
      \|_{i=1}^{n} (\R_{i} \parallel L_C)
    $
    is a sublanguage of $K$ controllable (and normal/observable) with respect to $L$ and $A_{uc}$ (and $A_o$) that is coobservable with respect to $(A_{o,i}\cup \Sigma_{o,i}\cup \Sigma'_{o})_{i=1}^{n}$, and whose components are synchronously nonconflicting.
  \end{theorem}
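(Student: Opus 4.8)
The plan is to reduce Theorem~\ref{thm22} to the nonconflicting case already handled by Theorem~\ref{main}. The key observation is that the family $(\R_i \parallel L_C)_{i=1}^{n}$ is synchronously nonconflicting thanks to the $\R_i$-observer property of $P_{\Sigma'}$. Indeed, $L_C$ is a language over $\Sigma'$, and $P_{\Sigma'}$ being an $\R_i$-observer for every $i$ is exactly the abstraction condition under which synchronous composition with a language over the observer alphabet preserves nonblockingness (cf. the appendix lemmas on $L$-observers). So first I would invoke this to get $\overline{\|_{i=1}^{n}(\R_i\parallel L_C)} = \|_{i=1}^{n}\overline{\R_i\parallel L_C}$, establishing the ``synchronously nonconflicting components'' claim and setting up the application of Theorem~\ref{newthm}.

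Second, I would handle the inclusion $\|_{i=1}^{n}(\R_i\parallel L_C)\subseteq K$. Since $L_C \subseteq \|_{i=1}^{n}P_{\Sigma'}(\R_i)$ and $\Sigma'$ contains all events shared between distinct $\R_i$'s, adding $L_C$ as a synchronized factor does not enlarge the product beyond $\|_{i=1}^{n}\R_i = \R \subseteq K$; more precisely $\|_{i=1}^{n}(\R_i\parallel L_C) = (\|_{i=1}^{n}\R_i)\parallel L_C \subseteq \R \subseteq K$, using associativity/commutativity of synchronous product and $L_C\parallel P_{\Sigma'}(\R_i)\subseteq$-type containments. Third, for controllability and normality (observability) with respect to $L$: each factor $\R_i$ is controllable and normal (observable) with respect to $P_{i+\Sigma_i}(L)$ by Assumption~\ref{assumption1}, and $L_C$ is controllable and normal (observable) with respect to $P_{\Sigma'}(L)$ by hypothesis; applying Lemma~\ref{feng} and Lemma~\ref{fengpo} (Lemma~\ref{obsComposition2}) to the whole family over the alphabets $(A_{o,i}\cup\Sigma_i)$ and $\Sigma'$, the product is controllable and normal (observable) with respect to $\|_{i}P_{i+\Sigma_i}(L)\parallel P_{\Sigma'}(L)$, which contains $L$; so the properties hold with respect to $L$ itself.

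Fourth, coobservability: having just shown the product is normal (observable) with respect to $L$ and that its defining factors are languages over the alphabets $A_{o,i}\cup\Sigma_i\cup\Sigma'$ (absorbing $L_C$ into the $i$-th factor, or treating it uniformly), and using the assumed event-status compatibility $(A_{o,i}\cup\Sigma_{o,i}\cup\Sigma'_{o})\cap A_c\subseteq(A_{c,i}\cup\Sigma_{c,i}\cup\Sigma'_{c})$, Theorem~\ref{newthm} applies and yields coobservability with respect to $L$ and $(A_{o,i}\cup\Sigma_{o,i}\cup\Sigma'_{o})_{i=1}^{n}$. The main obstacle I expect is bookkeeping the alphabets cleanly: one must verify that the decomposition $\overline{M} = \|_{i=1}^{n}\overline{M_i}$ required by Theorem~\ref{newthm} genuinely holds with $M_i = \R_i\parallel L_C$ over $B_i = A_{o,i}\cup\Sigma_i\cup\Sigma'$ (this is where the nonconflicting property from step one is indispensable), and that the shared-event status is globally consistent across these enlarged alphabets so that the normality/observability hypotheses of Theorem~\ref{newthm} transfer. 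The observer-based nonblocking preservation is the technical heart; the rest is an assembly of the cited lemmas.
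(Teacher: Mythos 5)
Your proposal follows essentially the same route as the paper's proof: nonconflictness of the components via the observer property and Lemma~\ref{fengT41} (applied in stages, which also needs the composition-of-observers fact to see that $P_{\Sigma'}$ is an $(\R_i\parallel L_C)$-observer), inclusion in $K$ from $L_C\subseteq\|_{i=1}^{n}P_{\Sigma'}(\R_i)$, controllability and normality (observability) via Lemmas~\ref{feng}, \ref{fengpo} and~\ref{obsComposition2} relative to the over-approximation $\|_{i=1}^{n}P_{i+\Sigma_i}(L)\parallel P_{\Sigma'}(L)\supseteq L$, and coobservability from Theorem~\ref{newthm} with $M_i=\R_i\parallel L_C$ over $A_{o,i}\cup\Sigma_i\cup\Sigma'$. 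The only detail you leave implicit is the explicit verification of the projected nonconflictness conditions required by Lemma~\ref{fengT41} (both sides collapse to $\overline{L_C}$ because $P_{\Sigma'}(\R_i\parallel L_C)=P_{\Sigma'}(\R_i)\parallel L_C=L_C$), which is precisely how the paper completes that step.
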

  \begin{proof}
    By definition and Lemma~\ref{lemma:Wonham}, we have that $L_C \subseteq \|_{i=1}^{n} P_{\Sigma'}(\R_i) = P_{\Sigma'}(\|_{i=1}^{n} \R_i) \subseteq P_{\Sigma'}(K)$. Thus, $\|_{i=1}^{n} (\R_{i} \parallel L_C) = (\|_{i=1}^{n} \R_{i}) \parallel L_C \subseteq K \parallel P_{\Sigma'}(K) = K$.
  
    To prove nonconflictness, we make use of Lemma~\ref{fengT41} in the appendix, which states that $\overline{\R_{i} \parallel L_C} = \overline{\R_{i}} \parallel \overline{L_C}$ if and only if $\overline{P_{\Sigma'}(\R_{i})\parallel L_C} = \overline{P_{\Sigma'}(\R_{i})} \parallel \overline{L_C}$. The second equation holds, because both sides are equal to $\overline{L_C}$. Using Lemma~\ref{fengT41} again, we have that $\overline{ \parallel_{i=1}^{n} (\R_{i} \parallel L_C) } = \|_{i=1}^{n} \overline{\R_{i} \parallel L_C}$ if and only if $\overline{\parallel_{i=1}^{n} P_{\Sigma'}(\R_{i} \parallel L_C)} = \|_{i=1}^{n} \overline{P_{\Sigma'}(\R_{i} \parallel L_C)}$. Lemma~\ref{fengT41} can be applied again, since $P_{\Sigma'}$ is an $(\R_{i} \parallel L_C)$-observer. It follows from Theorem~2 in Pena et al.~\cite{pcl06} saying that a composition of observers is an observer -- note that $P_{\Sigma'}$ is an $\R_{i}$-observer by assumption and an $L_C$-observer since it is an identity. Again, the latter equation holds, because $P_{\Sigma'}(\R_{i} \parallel L_C) = P_{\Sigma'}(\R_{i}) \parallel L_C = L_C$, by Lemma~\ref{lemma:Wonham} and the definition of $L_C$. Thus, both sides are equal to $\overline{L_C}$. To summarize,
    $
      \overline{\parallel_{i=1}^{n} (\R_{i}\parallel L_C)}
         = \|_{i=1}^{n} (\overline{\R_{i} \parallel L_C})
         = \|_{i=1}^{n} \overline{\R_{i}} \parallel \overline{L_C}.
    $

    To prove controllability (and normality/observability), note that $\R_{i}$ is controllable (and normal/observable) with respect to $P_{i+\Sigma_i}(L)$, and $L_C$ is controllable (and normal/observable) with respect to $P_{\Sigma'}(L)$. By Lemma~\ref{feng} (Lemmas~\ref{fengpo} and~\ref{obsComposition2}) in the appendix and the nonconflictness shown above, $\R_{i} \parallel L_C$ is controllable (and normal/observable) with respect to $P_{i+\Sigma_i}(L) \parallel P_{\Sigma'}(L)$ and $(A_{o,i}\cup\Sigma_{i}\cup\Sigma')\cap A_{uc}$. Similarly, $\|_{i=1}^{n} (\R_{i} \parallel L_C)$ is controllable (and normal/observable) with respect to $\|_{i=1}^{n} P_{i+\Sigma_i}(L) \parallel P_{\Sigma'}(L)$ and $A_{uc}$. Since $L\subseteq \|_{i=1}^{n} P_{i+\Sigma_i}(L)$ and $L\subseteq P_{\Sigma'}^{-1} P_{\Sigma'}(L)$, we have that $\|_{i=1}^{n} (\R_{i} \parallel L_C)$ is controllable (and normal/observable) with respect to $L$ and $A_{uc}$.
    
    Since $\R_i \parallel L_C$ is normal (observable) with respect to $P_{i+\Sigma_i}(L) \parallel P_{\Sigma'}(L)$, and it holds that $P_{i+\Sigma_{i}+\Sigma'}(L) \subseteq P_{i+\Sigma_i}(L) \parallel P_{\Sigma'}(L)$, we have that $\R_i \parallel L_C$ is normal (observable) with respect to $P_{i+\Sigma_{i}+\Sigma'}(L)$. Thus, using Theorem~\ref{newthm}, $\|_{i=1}^{n} (\R_{i} \parallel L_C)$ is coobservable with respect to $L$ and $(A_{o,i} \cup \Sigma_{o,i} \cup \Sigma'_o)_{i=1}^{n}$. In general, to apply Theorem~\ref{newthm}, we need to adjust the controllable status of events, if needed, to satisfy $(A_{o,i}\cup \Sigma_{o,i} \cup \Sigma'_{o}) \cap A_{c} \subseteq (A_{c,i} \cup \Sigma_{c,i}\cup \Sigma'_{c})$.
  \end{proof}

  In case of full observations, we strengthen the previous result by computing the language $L_C$ as a sublanguage of $\|_{i=1}^{n} P_{\Sigma'}(\R_i)$ controllable with respect to $\|_{i=1}^{n} \overline{P_{\Sigma'}(\R_i)}$ rather than to $P_{\Sigma'}(L)$, which may result in a larger language $L_C$ because $\|_{i=1}^{n} \overline{P_{\Sigma'}(\R_i)} \subseteq P_{\Sigma'}(L)$.
  \begin{theorem}
    Consider Assumption~\ref{assumption1}. Let $L_C$ be the supremal sublanguage of $\|_{i=1}^{n} P_{\Sigma'}(\R_{i})$ that is controllable with respect to $\|_{i=1}^{n} \overline{P_{\Sigma'}(\R_{i})}$ and $\Sigma'_{uc}$, where $\Sigma' \subseteq A$ contains all events shared by any pair of $\R_i$ and $\R_j$, for $i\neq j$, and $P_{\Sigma'}\colon A^*\to \Sigma'^*$ is an $\R_{i}$-observer, for $i=1,\ldots,n$. If every projection $P_{i+\Sigma_i}$ is an $L$-observer and LCC for $L$, and the projection $P_{\Sigma'}$ is LCC for $\|_{i=1}^{n} \overline{\R_{i}}$, then $\|_{i=1}^{n} (\R_{i} \parallel L_C) = \supc(K, L, A_{uc})$ is coobservable with respect to $L$ and $(A_{o,i}\cup \Sigma_{i} \cup \Sigma')_{i=1}^{n}$, whose components are synchronously nonconflicting. It is again under the assumption that $(A_{o,i}\cup \Sigma_i \cup \Sigma') \cap A_{c} \subseteq (A_{c,i} \cup \Sigma_{c,i} \cup \Sigma'_{c})$.
  \end{theorem}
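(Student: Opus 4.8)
The plan is to merge the construction-side reasoning of the proof of Theorem~\ref{thm22} with the optimality argument behind Theorem~\ref{thm_inc_opt_sol}, reading $\R_i$ throughout as the supremal controllable sublanguage $\supc_{i+\Sigma_i}$ of~(\ref{supC}) --- this is the only instance of Assumption~\ref{assumption1} for which an equality with $\supc(K,L,A_{uc})$ can hold, and it is what the $L$-observer and LCC hypotheses refer to. Since $A_{uo}=\emptyset$, we have $\Sigma_{o,i}=\Sigma_i$ and $\Sigma'_o=\Sigma'$, and every language in sight is trivially observable, so the final coobservability will come from case~2 of Theorem~\ref{newthm}.

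First I would prove that the components $\R_i\parallel L_C$ are synchronously nonconflicting, essentially verbatim from Theorem~\ref{thm22}: by \cite{pcl06} the map $P_{\Sigma'}$ is an $(\R_i\parallel L_C)$-observer (an $\R_i$-observer by hypothesis, an $L_C$-observer trivially), so two applications of Lemma~\ref{fengT41} reduce $\overline{\|_{i=1}^{n}(\R_i\parallel L_C)}=\|_{i=1}^{n}\overline{\R_i\parallel L_C}$ to a projected identity in which, by Lemma~\ref{lemma:Wonham} and $L_C\subseteq\|_{i=1}^{n}P_{\Sigma'}(\R_i)$, both sides equal $\overline{L_C}$. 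The same observer identities give $\|_{i=1}^{n}P_{\Sigma'}(\R_i)=P_{\Sigma'}(\|_{i=1}^{n}\R_i)$, hence $L_C\subseteq P_{\Sigma'}(K)$ and $\|_{i=1}^{n}(\R_i\parallel L_C)=(\|_{i=1}^{n}\R_i)\parallel L_C\subseteq K\parallel P_{\Sigma'}(K)=K$.

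The heart of the proof is the equality $\|_{i=1}^{n}(\R_i\parallel L_C)=\supc(K,L,A_{uc})$, for which I would invoke the coordination-control optimality results already cited in Theorem~\ref{thm_inc_opt_sol} (\cite{FLT,SB11}, and at the coordinator level \cite{scl2011}): the $L$-observer and LCC properties of each $P_{i+\Sigma_i}$ make the local $\supc_{i+\Sigma_i}$ the exact restrictions of the global solution, while $P_{\Sigma'}$ being an $\R_i$-observer and LCC for $\|_{i=1}^{n}\overline{\R_i}$ makes the supremal sublanguage $L_C$ of $\|_{i=1}^{n}P_{\Sigma'}(\R_i)$ controllable with respect to $\|_{i=1}^{n}\overline{P_{\Sigma'}(\R_i)}$ and $\Sigma'_{uc}$ the coordinator that re-glues the possibly conflicting $\R_i$ without loss; composing and transferring through $L\subseteq\|_{i=1}^{n}P_{i+\Sigma_i}(L)$ and $K=\|_{i=1}^{n}P_{i+\Sigma_i}(K)$ yields $\supc(K,L,A_{uc})$, whence controllability with respect to $L$ and $A_{uc}$ is immediate. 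This is also the step I expect to be the main obstacle: one must adapt the coordination-control optimality theorem --- normally stated for a genuine modular plant and its own natural projections --- to the over-approximated plant $\|_{i=1}^{n}P_{i+\Sigma_i}(L)$ and to a coordinator $L_C$ computed against the tighter language $\|_{i=1}^{n}\overline{P_{\Sigma'}(\R_i)}\subseteq P_{\Sigma'}(L)$, checking in particular that ``$P_{\Sigma'}$ LCC for $\|_{i=1}^{n}\overline{\R_i}$'' is the right coordinator-level substitute for LCC of the plant and that the tighter reference for $L_C$ does not push the composition below $\supc(K,L,A_{uc})$ --- the $\R_i$-observer property of $P_{\Sigma'}$, through $P_{\Sigma'}(\|_{i=1}^{n}\R_i)=\|_{i=1}^{n}P_{\Sigma'}(\R_i)$, being what should save this last point.

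Finally, for coobservability I would argue as in Theorem~\ref{thm22}: $\R_i$ is observable with respect to $P_{i+\Sigma_i}(L)$ and $A_{o,i}\cup\Sigma_{o,i}$ and $L_C$ is (trivially, as $A_{uo}=\emptyset$) observable with respect to $P_{\Sigma'}(L)$ and $\Sigma'$, so with the nonconflictness already shown, $\R_i\parallel L_C$ is observable with respect to $P_{i+\Sigma_i}(L)\parallel P_{\Sigma'}(L)\supseteq P_{i+\Sigma_i+\Sigma'}(L)$, hence with respect to $P_{i+\Sigma_i+\Sigma'}(L)$; after re-adjusting controllable statuses so that $(A_{o,i}\cup\Sigma_{o,i}\cup\Sigma'_o)\cap A_c\subseteq(A_{c,i}\cup\Sigma_{c,i}\cup\Sigma'_c)$, Theorem~\ref{newthm} delivers coobservability of $\|_{i=1}^{n}(\R_i\parallel L_C)$ with respect to $L$ and $(A_{o,i}\cup\Sigma_i\cup\Sigma')_{i=1}^{n}$, which completes the argument.
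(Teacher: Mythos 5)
Your treatment of nonconflictness, the inclusion in $K$, and the final appeal to Theorem~\ref{newthm} matches the paper's proof. The genuine gap is at what you yourself call the heart of the proof: the equality $\|_{i=1}^{n}(\R_i\parallel L_C)=\supc(K,L,A_{uc})$ is not proved but deferred to the coordination-control optimality results of \cite{FLT,SB11,scl2011} together with an acknowledged-but-unexecuted ``adaptation'' to the over-approximated plant $\|_{i=1}^{n}P_{i+\Sigma_i}(L)$ and to a coordinator computed against $\|_{i=1}^{n}\overline{P_{\Sigma'}(\R_i)}$ rather than $P_{\Sigma'}(L)$. That adaptation is exactly the content of the theorem; the paper does not cite those results here but proves both inclusions directly. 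Your argument is also slightly circular in that you derive controllability of the composition from the claimed equality, whereas the inclusion $\|_{i=1}^{n}(\R_i\parallel L_C)\subseteq\supc(K,L,A_{uc})$ requires controllability first (so that supremality can be invoked), and this controllability is itself nontrivial here: since $L_C$ is only controllable with respect to the tighter language $\|_{j=1}^{n}\overline{P_{\Sigma'}(\R_j)}$, the Theorem~\ref{thm22} argument does not carry over verbatim, and the paper instead chains Lemma~\ref{feng} to get controllability of $\|_{i=1}^{n}(\R_i\parallel L_C)$ with respect to $\|_{i=1}^{n}\overline{\R_i}$, then controllability of $\|_{i=1}^{n}\overline{\R_i}$ with respect to $\|_{i=1}^{n}P_{i+\Sigma_i}(L)\supseteq L$, and finally transitivity of controllability (Lemma~\ref{lem_trans}).

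For the reverse inclusion the paper gives a self-contained two-step argument that you would need to reproduce. Writing $\supc=\supc(K,L,A_{uc})$, one first shows $P_{i+\Sigma_i}(\supc)\subseteq\R_i$ by proving that $P_{i+\Sigma_i}(\supc)$ is a controllable sublanguage of $P_{i+\Sigma_i}(K)$ with respect to $P_{i+\Sigma_i}(L)$: given $t\in\overline{P_{i+\Sigma_i}(\supc)}$ and an uncontrollable $a$ with $ta\in P_{i+\Sigma_i}(L)$, lift $t$ to $s\in\overline{\supc}$, use the $L$-observer property of $P_{i+\Sigma_i}$ to obtain $sua\in L$ with $u$ invisible to $P_{i+\Sigma_i}$, use LCC to replace $u$ by an uncontrollable word $u'$, and conclude $su'a\in\overline{\supc}$ from controllability of $\supc$. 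The second step shows $P_{\Sigma'}(\supc)\subseteq L_C$ by the same pattern, using that $P_{\Sigma'}$ is a $(\|_{i=1}^{n}\overline{\R_i})$-observer (composition of observers) and LCC for $\|_{i=1}^{n}\overline{\R_i}$, together with $\|_{i=1}^{n}\overline{\R_i}\subseteq L$. Your intuition about which hypothesis serves which purpose is right, but without these two explicit controllability-of-projection arguments the central equality, and hence the proof, is incomplete.
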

  \begin{proof}
    Let $\supc=\supc(K,L,A_{uc})$. We first show that $\|_{i=1}^{n} (\R_{i} \parallel L_C)$ is a subset of $\supc$ coobservable with respect to $L$ and $(A_{o,i}\cup \Sigma_i \cup \Sigma')_{i=1}^{n}$, and its components are synchronously nonconflicting. 
    Similarly as in the proof of Theorem~\ref{thm22}, we can show that $\|_{i=1}^{n} (\R_{i} \parallel L_C) \subseteq K$ and that
    $
      \overline{\parallel_{i=1}^{n} (\R_{i}\parallel L_C)}
         = \|_{i=1}^{n} (\overline{\R_{i} \parallel L_C})
         = \|_{i=1}^{n} \overline{\R_{i}} \parallel \overline{L_C}.
    $

    To prove controllability of $\|_{i=1}^{n} (\R_{i} \parallel L_C)$ with respect to $L$ and $A_{uc}$, note that $\R_{i}$ is controllable with respect to $\overline{\R_{i}}$ and $(A_{o,i}\cup\Sigma_i)_{uc}$, and $L_C$ is controllable with respect to $\|_{j=1}^{n} \overline{P_{\Sigma'}(\R_{j})}$ and $\Sigma'_{uc}$. By Lemma~\ref{feng} in the appendix and the nonconflictness of $\R_i$ and $L_C$ shown above, $\R_{i} \parallel L_C$ is controllable with respect to $\overline{\R_{i}} \parallel (\|_{j=1}^{n} \overline{P_{\Sigma'}(\R_{j})})$ and $(A_{o,i}\cup\Sigma_i\cup\Sigma')_{uc}$. Using the same argument on $\R_i\parallel L_C$, for $i=1,\ldots,n$, we obtain that $\|_{i=1}^{n} (\R_{i} \parallel L_C)$ is controllable with respect to $\|_{i=1}^{n} (\overline{\R_{i}} \parallel \|_{j=1}^{n} \overline{P_{\Sigma'}(\R_{j})}) = \|_{i=1}^{n} \overline{\R_{i}}$ and $A_{uc}$. One more application of Lemma~\ref{feng} on $\overline{\R_i}$ gives that $\|_{i=1}^{n} \overline{\R_{i}}$ is controllable with respect to $\|_{i=1}^{n} P_{i+\Sigma_i}(L)$, hence with respect to $L\subseteq \|_{i=1}^{n} P_{i+\Sigma_i}(L)$, and $A_{uc}$. Using transitivity of controllability, Lemma~\ref{lem_trans} in the appendix, we obtain that $\|_{i=1}^{n} (\R_{i}\parallel L_C)$ is controllable with respect to $L$ and $A_{uc}$. 

    Since $\R_i \parallel L_C$ is trivially observable, Theorem~\ref{newthm} implies that $\|_{i=1}^{n} (\R_{i} \parallel L_C)$ is coobservable with respect to $L$ and $(A_{o,i}\cup\Sigma_{i}\cup \Sigma')_{i=1}^{n}$. It again holds under the assumption that $(A_{o,i}\cup \Sigma_i\cup \Sigma') \cap A_{c} \subseteq (A_{c,i} \cup \Sigma_{c,i} \cup \Sigma'_{c})$.

    It remains to show the other inclusion $\supc \subseteq \|_{i=1}^{n} (\R_{i} \parallel L_C)$. We show it in two steps. First we show that $\supc\subseteq \|_{i=1}^{n} \R_{i}$ and then that $P_{\Sigma'}(\supc) \subseteq L_C$.
    
    To show that $\supc\subseteq \|_{i=1}^{n} \R_{i}$, we prove that, for every $i$, $P_{i+\Sigma_i}(\supc) \subseteq \R_{i}$ by showing that $P_{i+\Sigma_i}(\supc)\subseteq P_{i+\Sigma_i}(K)$ is controllable with respect to $P_{i+\Sigma_i}(L)$. To this end, let $t \in \overline{P_{i+\Sigma_i}(\supc)}$, $a\in (A_{o,i}\cup\Sigma_i)\cap A_{uc}$, and $ta \in P_{i+\Sigma_i}(L)$. Then, there exists $s \in \overline{\supc}$ such that $P_{i+\Sigma_i}(s) = t$. Since $P_{i+\Sigma_i}$ is an $L$-observer, there exists $v \in A^*$ such that $sv \in L$ and $P_{i+\Sigma_i}(sv) = ta$, that is, $v = ua$ for some $u\in (A \setminus (A_{o,i}\cup\Sigma_i))^*$. The LCC property of $P_{i+\Sigma_i}$ for $L$ and $sua \in L$ imply that there exists $u' \in (A_{uc} \setminus (A_{o,i}\cup\Sigma_i))^*$ such that $su'a \in L$. Since $u'$ is uncontrollable, controllability of $\supc$ with respect to $L$ and $A_{uc}$ implies that $su'a \in \overline{\supc}$, that is, $P_{i+\Sigma_i}(su'a)=ta \in \overline{P_{i+\Sigma_i}(\supc)}$. Thus, $P_{i+\Sigma_i}(\supc)$ is controllable with respect to $P_{i+\Sigma_i}(L)$, hence $P_{i+\Sigma_i}(\supc)\subseteq \R_{i}$. 

    Finally, we show that $P_{\Sigma'}(\supc) \subseteq L_C$, i.e., that it is a subset of $\|_{i=1}^{n} P_{\Sigma'}(\R_{i})$ controllable with respect to $\|_{i=1}^{n} \overline{P_{\Sigma'}(\R_{i})}$ and $\Sigma'_{uc}$. Since $P_{i+\Sigma_i}(\supc) \subseteq \R_{i}$, we have that $\supc \subseteq \|_{i=1}^{n} P_{i+\Sigma_i}(\supc) \subseteq \|_{i=1}^{n} \R_{i}$. Applying projection $P_{\Sigma'}$, we obtain $P_{\Sigma'}(\supc) \subseteq P_{\Sigma'}(\|_{i=1}^{n} \R_{i}) = \|_{i=1}^{n} P_{\Sigma'}(\R_{i})$, where the last equality is by Lemma~\ref{lemma:Wonham}, see the appendix.
    Using Theorem~2 in Pena et al.~\cite{pcl06} saying that a composition of observers is an observer, the assumption on $P_{\Sigma'}$ to be an $\R_{i}$-observer, which also means that $P_{\Sigma'}$ is an $\overline{\R_{i}}$-observer, implies that $P_{\Sigma'}$ is a $(\|_{i=1}^{n} \overline{\R_{i}})$-observer. We show that $P_{\Sigma'}(\supc)$ is controllable with respect to $\|_{i=1}^{n} \overline{P_{\Sigma'}(\R_{i})}$ and $\Sigma_{uc}'$. To this end, let $t\in \overline{P_{\Sigma'}(\supc)}$, $a\in \Sigma'_{uc}$, and $ta\in \|_{i=1}^{n} \overline{P_{\Sigma'}(\R_{i})} = P_{\Sigma'}(\|_{i=1}^{n} \overline{\R_{i}})$. Then, there exists $s \in \overline{\supc}$ such that $P_{\Sigma'}(s) = t$. Since $P_{\Sigma'}$ is a $(\|_{i=1}^{n} \overline{\R_{i}})$-observer, there exists $v \in A^*$ such that $sv \in \|_{i=1}^{n} \overline{\R_{i}}$ and $P_{\Sigma'}(sv) = ta$, that is, $v = ua$ for some $u\in (A \setminus \Sigma')^*$. The LCC property of $P_{\Sigma'}$ for $\|_{i=1}^{n} \overline{\R_{i}}$ and $sua \in \|_{i=1}^{n} \overline{\R_{i}}$ then imply that there exists $u' \in (A_{uc} \setminus \Sigma')^*$ such that $su'a\in \|_{i=1}^{n} \overline{\R_{i}}$. Since $\|_{i=1}^{n} \overline{\R_{i}} \subseteq L$, and $u'$ is uncontrollable, controllability of $\supc$ with respect to $L$ and $A_{uc}$ implies that $su'a \in \overline{\supc}$. That is, $P_{\Sigma'}(su'a)=ta \in \overline{P_{\Sigma'}(\supc)}$, hence $P_{\Sigma'}(\supc)\subseteq L_C$.
  
    Together, $\supc = \|_{i=1}^{n} \R_{i} \parallel L_C = \|_{i=1}^{n} (\R_{i} \parallel L_C)$, which completes the proof.
  \end{proof}

  Notice that it is sufficient to require that $P_{\Sigma'}$ is an $L$-observer and LCC for $L$.
  
  In case of partial observations, one could obtain the global supremal controllable and normal sublanguage in a similar way under the assumptions similar to those discussed below Theorem~\ref{thm_inc_opt_sol}. Since, as already mentioned, there is no global optimal solution in the case of partial observations, we do not discuss this case in more detail.

\subsection{Complexity}\label{complexity}
  We now briefly discuss the complexity of our approach. Since we use standard notions, the complexity mainly depends on the complexity of corresponding algorithms for the computation of controllable and observable languages.
  
  In Assumption~\ref{assumption1}, we assume that $K$ is separable. If this is not the case, a polynomial-time algorithm~\cite{KMvS12cd} is used to find extensions $(\Sigma_{i})_{i=1}^{n}$ making the language separable with respect to extended alphabets. Then we compute local supervisors $\R_i$ using the standard algorithms discussed in Subsection~\ref{subsec:Ri}. This requires to compute the projections of $K$ and $L$, which is exponential in the worst case. However, the alphabet $\Sigma_i$ can be chosen so that the projection $P_{i+\Sigma_i}$ is $K$- and $L$-observer, which then results in the computation of those projections in polynomial time~\cite{wong98}. The computation of $\R_i$ is then performed in the respective time. It is polynomial in the case of full observations.
  To check/ensure optimality in the case of full observations, we either check whether the polynomially many pairs of languages are mutually controllable, which can be done in polynomial time, or further extend the alphabets $\Sigma_i$ in polynomial time so that the observer and LCC conditions of Theorem~\ref{thm_inc_opt_sol} are satisfied.
  Furthermore, Theorems~\ref{main} and~\ref{thm_inc_opt_sol} require that the computed supervisors are nonconflicting. This is a PSPACE-complete problem~\cite{rohloff}. However, this test can be skipped and we can directly compute the language $L_C$ from Theorem~\ref{thm22}, which may, in the worst case, require exponential space with respect to the number of local supervisors.

\section{Conclusion}
  In this paper, we have shown how to construct a solution to the decentralized control problem (a controllable and coobservable sublanguage of a specification) by using additional communications. Our approach relies on the notion of conditional decomposability recently studied by the authors, which overcomes the undecidable problem to find a separable sublanguage of the specification. The computation of local supervisors is fully decentralized and coobservability is guaranteed by construction.  We discussed two ways how to obtain the globally optimal solution in case of full observations if the computed languages are synchronously nonconflicting (prefix-closed). One is based on the notion of mutual controllability, the other on increasing the communication between supervisors. Indeed, both approaches can be combined as preferred. Our approach can be used for both prefix-closed and non-prefix-closed specifications. For conflicting supervisors, we showed how to impose nonconflictness, hence coobservability.

\appendix
\section{Proof of Theorem~\ref{thm6}}

  \newcounter{oldthm}
  \setcounter{oldthm}{\value{theorem}}
  \setcounter{theorem}{\value{thm6again}}
  \begin{theorem}
    The following problem is PSPACE-complete.
    \begin{itemize}
      \item[] \textsc{Input:} Alphabets $E_1,E_2,\ldots,E_n$ and a generator $H$ over $\cup_{i=1}^{n} E_i$.
      \item[] \textsc{Output:} {\tt Yes} if and only if $L_m(H)$ is separable with respect to $(E_i)_{i=1}^{n}$.
    \end{itemize}
  \end{theorem}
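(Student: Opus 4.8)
The plan is to prove two things: that separability is in PSPACE, and that it is PSPACE-hard; since PSPACE is closed under complement, I will be free to argue about the complementary problem, non-separability, whenever that is more convenient.

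\textbf{Membership in PSPACE.} Since the inclusion $L_m(H)\subseteq\bigcap_{i=1}^{n}P_i^{-1}(P_i(L_m(H)))$ always holds, $L_m(H)$ fails to be separable with respect to $(E_i)_{i=1}^{n}$ exactly when there is a word $w\in\bigcap_{i=1}^{n}P_i^{-1}(P_i(L_m(H)))$ with $w\notin L_m(H)$, where $P_i$ is the projection from $E^{*}$ to $E_i^{*}$ and $E=\bigcup_{i=1}^{n}E_i$. I would first note that $E^{*}\setminus L_m(H)$ is recognised by a deterministic automaton of size $O(|H|)$ --- complete $H$ with a dead state and complement the marked states --- and that, for each $i$, the language $P_i^{-1}(P_i(L_m(H)))$ is recognised by a nondeterministic automaton $N_i$ with $\varepsilon$-transitions of size $O(|H|)$: take $H$, relabel every transition on a letter of $E\setminus E_i$ as $\varepsilon$, and add at every state a self-loop on each letter of $E\setminus E_i$. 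Then non-separability is exactly non-emptiness of the product of these $n+1$ automata. That product has exponentially many states, but each of its states is a tuple of one state per component and is therefore described in polynomial space, and its transition relation is computable in polynomial time. Hence a nondeterministic procedure decides non-emptiness by guessing a witnessing word one letter at a time while maintaining only the current tuple --- resolving the nondeterministic choices and $\varepsilon$-moves of the $N_i$ on the fly --- together with a binary step counter bounded by the number of product configurations to guarantee termination. This is a nondeterministic polynomial-space algorithm, so by Savitch's theorem non-separability, and hence separability, is in PSPACE.

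\textbf{PSPACE-hardness.} I would reduce in polynomial time from the finite-automata intersection problem: given deterministic automata $D_1,\dots,D_n$ over a common alphabet $\Sigma$, decide whether $\bigcap_{i=1}^{n}L(D_i)\neq\emptyset$. This is PSPACE-complete~\cite{Kozen77}, and it remains so when $n\ge 3$ (pad with a dummy automaton accepting $\Sigma^{*}$). From such an instance I build fresh symbols $\$_1,\dots,\$_n\notin\Sigma$, set $E_i=(\Sigma\cup\{\$_1,\dots,\$_n\})\setminus\{\$_i\}$ --- so $P_i$ deletes $\$_i$ and keeps everything else, and $\bigcup_{i=1}^{n}E_i=\Sigma\cup\{\$_1,\dots,\$_n\}$ --- and let $H$ mark
\[
  \bigcup_{i=1}^{n}\bigl\{\,\$_1\cdots\$_{i-1}\$_{i+1}\cdots\$_n\, v \;:\; v\in L(D_i)\,\bigr\}\,,
\]
that is, $H$ accepts precisely those words consisting of all of $\$_1,\dots,\$_n$ but exactly one, say $\$_i$, in increasing order, followed by a word accepted by $D_i$. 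A generator for this language has size $O(n^{2}+\sum_{i}|D_i|)$ --- it scans the block of dollar symbols, thereby identifying the unique missing index $i$, and then runs a copy of $D_i$ on the suffix --- and, crucially, it never has to simulate the exponential-size product automaton $D_1\times\cdots\times D_n$.

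\textbf{Correctness and the main obstacle.} The crux is the claim that, for $n\ge 3$, $L_m(H)$ is separable with respect to $(E_i)_{i=1}^{n}$ if and only if $\bigcap_{i=1}^{n}L(D_i)=\emptyset$; combined with PSPACE-completeness of intersection-emptiness and closure of PSPACE under complement, this yields PSPACE-hardness. One direction is easy: if $v\in\bigcap_i L(D_i)$, then $w=\$_1\$_2\cdots\$_n\, v$ satisfies $P_i(w)=\$_1\cdots\$_{i-1}\$_{i+1}\cdots\$_n\, v\in P_i(L_m(H))$ for every $i$ (witnessed by the $i$-th block above with this $v$), while $w\notin L_m(H)$ because it carries all $n$ dollar symbols, so separability fails. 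For the converse I would take a hypothetical ``bad'' word $w$ (in $\bigcap_iP_i^{-1}(P_i(L_m(H)))$ but not in $L_m(H)$) and argue in stages: because $\Sigma$ and every $\$_k$ with $k\neq j$ survive $P_j$, $w$ must be a sorted, repetition-free set $T\subseteq\{1,\dots,n\}$ of dollar symbols followed by some $v\in\Sigma^{*}$; a dollar-count argument shows that $|T|\le n-2$ makes $P_j(w)$ too short to lie in $P_j(L_m(H))$ for any $j$ (here $n\ge 3$ is used), that $|T|=n-1$ forces $w$ to coincide with one of the generating words and hence $w\in L_m(H)$, and that $|T|=n$ forces the $n$ resulting constraints to say exactly $v\in\bigcap_iL(D_i)$. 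Thus a bad word exists only when the intersection is nonempty. I expect this converse case analysis to be the main obstacle: the whole construction hinges on choosing the overlap pattern of the $E_i$ so that the gluing of projections cannot manufacture any spurious bad word and only the genuine ``full dollar block'' witnesses of a common accepted word survive --- the PSPACE upper bound and the soundness direction are comparatively routine. Finally, since separability is decomposability in the special case $L=E^{*}$, this reduction also strengthens the known PSPACE-completeness of decomposability, whose proof used a nontrivial plant~$L$.
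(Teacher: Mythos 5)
Your proof is correct, and both the upper bound (on-the-fly product simulation, NPSPACE $=$ PSPACE) and the source of hardness (Kozen's DFA-intersection problem, with each $E_i$ obtained by deleting one fresh symbol so that $P_i$ hides exactly that symbol) coincide with the paper's. The gadget itself is genuinely different, though. The paper prefixes each $L(D_i)$ with a single marker $e_i$, appends a terminal marker $c$ after accepting states, adds sink states $q_1,q_2,q_3$ with self-loops so that $\Sigma^*\cup\{e_1,\ldots,e_n\}^*\subseteq L(H)$, and marks every state; the non-separability witness is $tc$ for $t$ in the intersection, and the converse requires a fairly long case analysis over which component $T_i$ of $P_i^{-1}P_i(L(H))$ the bad word falls into. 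Your gadget instead prefixes $L(D_i)$ with the sorted block of all dollars except $\$_i$, the witness is the full dollar block followed by a common word, and the converse collapses to a short counting argument on $|T|$ (using $n\ge 3$ in the same place the paper does). Your analysis is cleaner and your generator is still polynomial, of size $O(n^2+\sum_i|D_i|)$, so the reduction goes through. The one thing the paper's construction buys that yours does not is that its $H$ marks all states, so $L_m(H)=L(H)$ is prefix-closed and PSPACE-hardness is established already for prefix-closed (generated) languages; your marked language is not prefix-closed. This is immaterial for the theorem as stated, but worth noting if one wants the stronger corollary for prefix-closed specifications.
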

  \setcounter{theorem}{\value{oldthm}}
  \begin{proof}
    Standard techniques simulating a product automaton on-the-fly show that it belongs to PSPACE.
    To prove hardness, we reduce the finite-state automata intersection problem (INT): Given a set of deterministic finite automata $\{G_i\}_{i=1}^{n}$ over a common alphabet $\Sigma$. Is $\bigcap_{i=1}^{n} L_m(G_i)=\emptyset$? The problem is PSPACE-complete~\cite{Kozen77}.
    
    Let a set of deterministic automata $\{G_i\}_{i=1}^{n}$ with a common alphabet $\Sigma$ be an instance of INT. Without loss of generality, we assume that $n\ge 3$, since if $n$ is constant, then the problem is solvable in {\sc PTime}. 
    Let $G_i=(X_i,\Sigma,\delta_{i},x_{o}^{i},F_{i})$ and assume that all states of $G_i$ are reachable from the initial state $x_{o}^{i}$. This assumption does not change the complexity. We construct a deterministic automaton $H$ and alphabets $(E_i)_{i=1}^{n}$ in polynomial time such that $L(H)$ is separable if and only if $\bigcap_{i=1}^{n} L_m(G_i) = \emptyset$.

    To this end, we define the automaton $H = (X, E, \delta, q_0, X)$ so that the set of states is $X = \bigcup_{i=1}^{n} X_i \cup \{q_0,q_1,q_2,q_3\}$, where $q_0,q_1,q_2,q_3$ are new states, $E = \Gamma \cup \Sigma$, where $\Gamma=\{e_1,\ldots,e_n,c\}$ is an alphabet such that $\Gamma \cap \Sigma = \emptyset$, and the transition function is defined as follows. The initial state $q_0$ goes under $e_i$ to the initial state $x_{o}^{i}$ of $G_i$, $i=1,\ldots,n$, and for every $a$ in $\Sigma$, $q_0$ goes under $a$ to $q_3$. In $q_3$, there is a self-loop under every $a$ in $\Sigma$. The transitions inside every $G_i$ are unchanged. For every $e$ in $\{e_1,\ldots,e_n\}$ and every $i=1,\ldots,n$, we add a transition from $x_{o}^{i}$ to $q_1$ under $e$. State $q_1$ contains a self-loop for every $e$ in $\{e_1,\ldots,e_n\}$. Finally, for $i=1,\ldots,n$, we add a $c$-transition from all states of $F_i$ of $G_i$ to state $q_2$, cf. Fig.~\ref{figReduction}.
    
    To complete the reduction, we define $E_i = E \setminus \{e_i\}$, which defines the projection $P_i \colon E^* \to E_i^*$, $i=1,\ldots,n$. Note that the reduction is polynomial.
    \begin{figure}
      \centering
      \includegraphics[scale=.8]{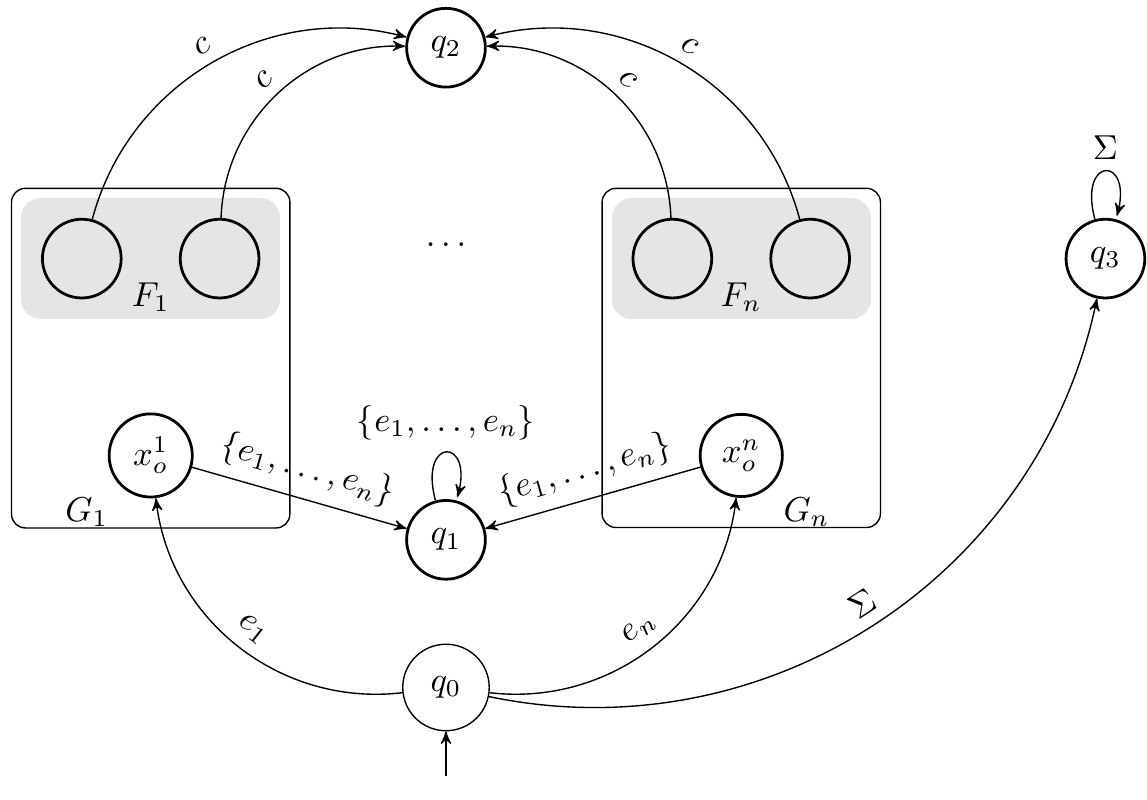}
      \caption{The automaton $H$}
      \label{figReduction}
    \end{figure}
    We show that $L(H)$ is separable with respect to $(E_i)_{i=1}^{n}$ if and only if $\bigcap_{i=1}^{n} L_m(G_i) = \emptyset$. 
    
    Assume that $t\in \bigcap_{i=1}^{n} L_m(G_i)$. Then $e_itc \in L(H)$, for all $i=1,\ldots,n$, which implies that $tc\in P_i^{-1}(P_i(e_itc))$, hence $tc \in \|_{i=1}^{n} P_i(L(H))$. However, $tc \notin L(H)$, which shows that $L(H)$ is not separable with respect to $(E_i)_{i=1}^{n}$.
    
    To prove the other direction, we assume that $L(H)$ is not separable and show that $\bigcap_{i=1}^{n} L_m(G_i) \neq\emptyset$. Let $w\in \|_{i=1}^{n} P_i(L(H))$ and $w\notin L(H)$. Note that $\Sigma^* \cup \{e_1,\ldots,e_n\}^*\subseteq L(H)$ because
    $
      L(H) = \Sigma^* \cup \{e_1,\ldots,e_n\}^* 
            \cup \bigcup_{i=1}^{n} e_iL_m(G_i)c 
            \cup \bigcup_{i=1}^{n} e_iL(G_i).
    $
    Therefore, we have that for $i=1,\ldots,n$, the word $w$ belongs to $P_i^{-1}P_i(L(H)) = (\Sigma\cup\{e_i\})^* \cup \{e_1,\ldots,e_n\}^* \cup P_i^{-1}(L_m(G_i)c) \cup \bigcup_{j\neq i} P_i^{-1}(e_jL_m(G_j)c) \cup P_i^{-1}(L(G_i)) \cup \bigcup_{j\neq i} P_i^{-1}(e_jL(G_j))$.

    We first show that if $c$ does not appear in $w$, then $w$ belongs to $L(H)$. In this case, based on the above observation, $w$ must contain at least one event from $\Sigma$ and at least one event from $\{e_1,\ldots,e_n\}$. Thus,
    \[
      w \in (\Sigma\cup\{e_i\})^* \cup \bigcup_{j\neq i} P_i^{-1}(e_jL(G_j))
    \]
    for $i=1,\ldots,n$, because $P_i^{-1}(L(G_i))\subseteq (\Sigma\cup\{e_i\})^*$. 
    Then $w\in \bigcap_{i=1}^{n} T_i$, where $T_i$ is one of the languages forming the union above. 
    
    If, for some $i\neq j$, $T_i=(\Sigma\cup\{e_i\})^*$ and $T_j=(\Sigma\cup\{e_j\})^*$, then $w\in\Sigma^*\subseteq L(H)$; a contradiction.
    
    If there is only one $i$ such that $T_i=(\Sigma\cup\{e_i\})^*$ and, for all $j\neq i$, $T_j = P_j^{-1}(e_{k_j}L(G_{k_j}))$, where $k_j\neq j$, then $w$ belongs to $\bigcap_{\ell=1}^{n} T_\ell$ if and only if $e_{k_j}=e_i$, for all $j\neq i$. Hence, for $j\neq i$, $T_i\cap T_j = e_iL(G_i)$, which implies that $\bigcap_{\ell=1}^{n} T_\ell = e_iL(G_i) \subseteq L(H)$; a contradiction.
    
    The last option is that, for all $i$, $T_i=P_i^{-1}(e_{j_i}L(G_{j_i}))$, where $j_i\neq i$. Then there exist $j_k\neq j_\ell$ such that $e_{j_k}\neq e_{j_\ell}$. Without loss of generality, we assume that $w=v_1 e_{j_k} v_2 e_{j_\ell} w'$, where $v_1v_2w' \in (\Sigma\cup\{e_1,\ldots,e_n\})^*$. Since $w\in T_k=P_k^{-1}(e_{j_k}L(G_{j_k}))$, $P_k(w)\in e_{j_k}L(G_{j_k})$, hence $P_k(v_1)=\eps$ and $P_k(v_2 e_{j_\ell}w')\in\Sigma^*$, which implies that $j_\ell = k$, $v_1\in\{e_k\}^*$ and $v_2w'\in (\Sigma\cup\{e_k\})^*$. Similarly, $P_\ell(w)\in e_{j_\ell}L(G_{j_\ell})$ implies that $j_k = \ell$, $v_1v_2\in\{e_\ell\}^*$ and $w'\in (\Sigma\cup\{e_\ell\})^*$. Together, $v_1v_2=\eps$, $w'\in\Sigma^*$, and $w=e_\ell e_k w'$, for $k\neq\ell$. By the assumption, there is a projection $P_m$ such that $P_m \notin \{P_k,P_\ell\}$. Since $w\in T_m$, $P_m(w)\in (\Sigma \cup \{e_{j_m}\})^*$, and $P_m(e_\ell)=\eps$ or $P_m(e_k)=\eps$. The first case gives that $P_m=P_\ell$, the second that $P_m=P_k$, which is a contradiction.
    
    Thus, we have show that if $c$ does not appear in $w$, then $w$ belongs to $L(H)$.
    
    Assume that $c$ appears in $w$. By the analysis above, $w\in \bigcap_{i=1}^{n} [ P_i^{-1}(L_m(G_i)c) \cup \bigcup_{j\neq i} P_i^{-1}(e_jL_m(G_j)c) ]$. It implies that there is exactly one $c$ in $w$. Again, $w\in \bigcap_{i=1}^{n} T_i$, where $T_i$ is one of the elements of the union.

    Analogously as above, if, for all $i$, $T_i = P_i^{-1}(e_{j_i}L_m(G_{j_i})c)$, where $j_i\neq i$, then there are $j_k\neq j_\ell$ such that $e_{j_k}\neq e_{j_\ell}$. Without loss of generality, let $w=v_1 e_{j_k} v_2 e_{j_\ell} w' c w''$, where $v_1v_2w'w'' \in (\Sigma\cup\{e_1,\ldots,e_n\})^*$. Then $P_k(w)\in e_{j_k}L_m(G_{j_k})c$, hence we have that $P_k(v_1)=\eps$ and $P_k(v_2 e_{j_\ell}w'w'')\in\Sigma^*$, which implies that $j_\ell = k$, $v_1\in\{e_k\}^*$ and $v_2w'w''\in (\Sigma\cup\{e_k\})^*$. Similarly, $P_\ell(w)\in e_{j_\ell}L_m(G_{j_\ell})c$ implies that $j_k = \ell$, $v_1v_2\in\{e_\ell\}^*$ and $w'w''\in (\Sigma\cup\{e_\ell\})^*$. Together, $v_1v_2=\eps$, $w'w''\in\Sigma^*$, and $w=e_\ell e_k w'cw''$, for $k\neq\ell$. Let $P_m\notin\{P_k,P_\ell\}$ be a projection. Since $P_m(w)\in (\Sigma \cup \{e_{j_m},c\})^*$, $P_m(e_\ell)=\eps$ or $P_m(e_k)=\eps$. The first case gives that $P_m=P_\ell$, the second that $P_m=P_k$, which is a contradiction.

    Thus, there must exist $i$ such that $T_i = P_i^{-1}(L_m(G_i)c)$. Then $P_i(w)\in L_m(G_i)c$, which implies that $w\in(\Sigma\cup\{e_i,c\})^*$. This means that, for $j\neq i$, $P_j(w) = w \in L_m(G_j)c \cup e_iL_m(G_i)c$. 
    If $T_j = e_iL_m(G_i)c$, for some $j\neq i$, then $w\in (\Sigma\cup\{e_i,c\})^* \cap e_iL_m(G_i)c = e_iL_m(G_i)c \subseteq L(H)$; a contradiction again. 
    Thus, it must be that for every $j\neq i$, $T_j = L_m(G_j)c$. Then $w\in \bigcap_{i=1}^{n} T_i = \bigcap_{i=1}^{n} L_m(G_i)c$ implies that $\bigcap_{i=1}^{n} L_m(G_i)\neq\emptyset$.
  \end{proof}

\section{Auxiliary Results}
  \begin{lemma}[\cite{FLT}]\label{feng}
    For $i=1,2$, let $L_i$ be a prefix-closed language over $A_i$, and let $K_i \subseteq L_i$ be controllable with respect to $L_i$ and $A_{i,uc}$. Let $A = A_1\cup A_2$. If $K_1$ and $K_2$ are synchronously nonconflicting, then $K_1\parallel K_2$ is controllable with respect to $L_1\parallel L_2$ and $A_{uc}$.
  \end{lemma}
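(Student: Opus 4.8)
The plan is to unfold the definition of controllability for $K_1 \parallel K_2$ and reduce the verification to a short case analysis on the position of the uncontrollable event relative to $A_1$ and $A_2$.

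First I would use the synchronous nonconflictingness hypothesis to rewrite $\overline{K_1 \parallel K_2} = \overline{K_1} \parallel \overline{K_2} = P_1^{-1}(\overline{K_1}) \cap P_2^{-1}(\overline{K_2})$, where $P_i \colon A^* \to A_i^*$ are the canonical projections; since each $\overline{K_i}$ is prefix-closed, this intersection is prefix-closed, as a closed-loop language should be. I would also record that the standing convention $A_{i,uc} = A_i \cap A_{uc}$ forces $A_{uc} = A_{1,uc} \cup A_{2,uc}$, which is exactly what makes the case analysis below exhaustive. To check $\overline{K_1 \parallel K_2}\,A_{uc} \cap (L_1 \parallel L_2) \subseteq \overline{K_1 \parallel K_2}$, take $s \in \overline{K_1} \parallel \overline{K_2}$, $a \in A_{uc}$, with $sa \in L_1 \parallel L_2$; it suffices to show $P_i(sa) \in \overline{K_i}$ for $i = 1,2$. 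If $a \in A_i$, then $a \in A_{i,uc}$ and $P_i(sa) = P_i(s)\,a$; since $P_i(s) \in \overline{K_i}$ and $P_i(s)\,a = P_i(sa) \in L_i$ (because $P_i(L_1 \parallel L_2) \subseteq L_i$), controllability of $K_i$ with respect to $L_i$ and $A_{i,uc}$ gives $P_i(s)\,a \in \overline{K_i}$. If $a \notin A_i$, then $P_i(sa) = P_i(s) \in \overline{K_i}$ trivially. Because $a \in A_{uc} \subseteq A = A_1 \cup A_2$, at least one of the "in" cases applies, but in every situation both projections of $sa$ land in the respective $\overline{K_i}$, so $sa \in P_1^{-1}(\overline{K_1}) \cap P_2^{-1}(\overline{K_2}) = \overline{K_1 \parallel K_2}$, which finishes the argument.

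I do not expect a genuine obstacle here — the statement is classical and the manipulation is routine. The one delicate point is the shared uncontrollable event $a \in A_1 \cap A_2$, which must be pushed into \emph{both} component prefix closures simultaneously; this works only because $a$ is uncontrollable in each $A_i$ (the convention $A_{i,uc} = A_i \cap A_{uc}$) and because the target language is written as $\overline{K_1} \parallel \overline{K_2}$ rather than directly as $\overline{K_1 \parallel K_2}$ — the identification of the two being precisely the content of the synchronous nonconflictingness hypothesis, which is therefore indispensable.
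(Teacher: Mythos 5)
Your proof is correct. The paper does not actually prove this lemma --- it is quoted from the literature (the citation [FLT]) and used as a black box --- so there is no in-paper argument to compare against; your write-up is the standard one: rewrite $\overline{K_1\parallel K_2}$ as $P_1^{-1}(\overline{K_1})\cap P_2^{-1}(\overline{K_2})$ via nonconflictingness, then push the uncontrollable event through each projection separately. You also correctly isolate the only genuinely delicate point: a shared uncontrollable event must be absorbed by \emph{both} components, which works precisely because the paper's convention $A_{i,uc}=A_i\cap A_{uc}$ guarantees that an event of $A_{uc}\cap A_i$ is uncontrollable \emph{for component $i$} (in formulations where the local uncontrollable alphabets are independent data, one would additionally need shared events to have the same control status in both components). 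The only step left implicit is that $L_1\parallel L_2$ is prefix-closed (so that the controllability definition applies to it as the plant language); this follows immediately from prefix-closedness of the $L_i$ and of inverse projections, and does not affect the validity of the argument.
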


  \begin{lemma}\label{fengpo}
    For $i=1,2$, let $L_i$ be a prefix-closed language over $A_i$, and let $K_i \subseteq L_i$ be normal with respect to $L_i$, $A_{i,uc}$ and $P_{o,i} \colon A_i^* \to A_{o,i}^*$. Let $A = A_1 \cup A_2$. If $K_1$ and $K_2$ are synchronously nonconflicting, then $K_1\parallel K_2$ is normal with respect to $L_1\parallel L_2$, $A_{uc}$ and $P_o\colon A^* \to A_o^*$.
  \end{lemma}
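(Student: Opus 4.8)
The plan is to verify the defining equation of normality directly, namely that
$\overline{K_1 \parallel K_2} = P_o^{-1}P_o(\overline{K_1 \parallel K_2}) \cap (L_1 \parallel L_2)$.
The inclusion ``$\subseteq$'' is automatic, since $\overline{K_1\parallel K_2}\subseteq L_1\parallel L_2$ and every language is contained in the $P_o^{-1}P_o$ of its own closure. Before attacking ``$\supseteq$'', I would use the hypothesis that $K_1$ and $K_2$ are synchronously nonconflicting to rewrite $\overline{K_1\parallel K_2} = \overline{K_1}\parallel\overline{K_2} = P_1^{-1}(\overline{K_1})\cap P_2^{-1}(\overline{K_2})$, so that the statement becomes a claim about the closures $\overline{K_i}$, which are precisely the objects that the normality of $K_i$ constrains. (The uncontrollable sets $A_{i,uc}$ and $A_{uc}$ appearing in the statement play no role here; they are listed only to parallel the controllability Lemma~\ref{feng}.)

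For ``$\supseteq$'', I would take a word $w \in P_o^{-1}P_o(\overline{K_1}\parallel\overline{K_2})\cap (L_1\parallel L_2)$ and show $P_i(w)\in\overline{K_i}$ for $i=1,2$, which is exactly $w\in\overline{K_1}\parallel\overline{K_2}$. From $w \in L_1\parallel L_2$ we get $P_i(w)\in L_i$ directly. For the observation part, pick $v\in\overline{K_1}\parallel\overline{K_2}$ with $P_o(v)=P_o(w)$; then $P_i(v)\in\overline{K_i}$. The key bookkeeping step is the factorization of morphisms on $A^*$: writing $P_{o,i}^A\colon A^*\to A_{o,i}^*$ for the projection onto $A_{o,i}$, one checks event by event that $P_{o,i}\circ P_i = P_{o,i}^A = P_{o,i}^{o}\circ P_o$, where $P_{o,i}^{o}\colon A_o^*\to A_{o,i}^*$; this uses only $A_{o,i}\subseteq A_i$ and $A_{o,i}\subseteq A_o$. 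Applying $P_{o,i}^{o}$ to $P_o(w)=P_o(v)$ then gives $P_{o,i}(P_i(w)) = P_{o,i}(P_i(v)) \in P_{o,i}(\overline{K_i})$, so $P_i(w)\in P_{o,i}^{-1}(P_{o,i}(\overline{K_i}))$.

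Combining this with $P_i(w)\in L_i$ and the normality of $K_i$, namely $\overline{K_i}=P_{o,i}^{-1}(P_{o,i}(\overline{K_i}))\cap L_i$, yields $P_i(w)\in\overline{K_i}$ for $i=1,2$, hence $w\in P_1^{-1}(\overline{K_1})\cap P_2^{-1}(\overline{K_2})=\overline{K_1}\parallel\overline{K_2}=\overline{K_1\parallel K_2}$. This proves ``$\supseteq$'', hence the equality, so $K_1\parallel K_2$ is normal with respect to $L_1\parallel L_2$ and $P_o$. I expect the only real obstacle to be the alphabet bookkeeping in the factorization $P_{o,i}\circ P_i = P_{o,i}^{o}\circ P_o$ and confirming that the closures behave as claimed; once the nonconflicting hypothesis has been used to pass to $\overline{K_1}\parallel\overline{K_2}$, the remainder is a routine diagram chase, structurally parallel to the proof of Lemma~\ref{feng}.
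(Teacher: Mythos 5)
Your proof is correct and follows essentially the same route as the paper's: the paper's one-line argument is precisely the inclusion $P_o^{-1}P_o(\overline{K_1\parallel K_2}) \cap (L_1\parallel L_2) \subseteq P_{o,1}^{-1}P_{o,1}(\overline{K_1}) \parallel P_{o,2}^{-1}P_{o,2}(\overline{K_2}) \parallel L_1 \parallel L_2 = \overline{K_1}\parallel\overline{K_2}$, which is exactly your word-level chase through the factorization $P_{o,i}\circ P_i = P_{o,i}^{o}\circ P_o$ followed by local normality, with the nonconflicting hypothesis used in the same place. Your observation that $A_{i,uc}$ and $A_{uc}$ play no role in the normality definition is also accurate.
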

  \begin{proof}
    $P_o^{-1}P_o(\overline{K_1\parallel K_2}) \cap L_1\parallel L_2 \subseteq P_{o,1}^{-1}P_{o,1}(\overline{K_1}) \parallel P_{o,2}^{-1}P_{o,2}(\overline{K_2}) \parallel L_1\parallel L_2 = \overline{K_1} \parallel \overline{K_2} = \overline{K_1\parallel K_2}$. As the other inclusion always holds, the proof is complete.
  \end{proof}
  
  \begin{lemma}\label{obsComposition2}
    For $i=1,2$, let $L_i$ be a prefix-closed language over $A_i$, and let $K_i \subseteq L_i$ be observable with respect to $L_i$, $A_{i,uc}$ and $P_{o,i}\colon A_i^* \to A_{o,i}^*$. Let $A = A_1 \cup A_2$. If $K_1$ and $K_2$ are synchronously nonconflicting, then $K_1\parallel K_2$ is observable with respect to $L_1\parallel L_2$, $A_{uc}$ and $P_o\colon A^* \to A_{o}^*$.
  \end{lemma}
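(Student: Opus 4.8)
The plan is to unfold the definition of observability directly, since observability --- unlike normality, which has the algebraic characterization $\overline{K}=P_o^{-1}P_o(\overline{K})\cap L$ that was exploited in the one-line proof of Lemma~\ref{fengpo} --- admits no such closed form. I would begin by recording the standard symmetric reformulation: a language $K\subseteq L$ is observable with respect to $L$ and $P_o$ if and only if for all $s_1,s_2\in\overline{K}$ with $P_o(s_1)=P_o(s_2)$ and all $a\in A_c$, $s_1a\in\overline{K}$ and $s_2a\in L$ imply $s_2a\in\overline{K}$; this is just the contrapositive of the implication in the definition (for the forward direction, if $s_2a\notin\overline{K}$ then $s_1a$ would witness $P_o^{-1}P_o(s_2)\{a\}\cap\overline{K}\neq\emptyset$, contradicting observability).

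The key steps, in order, would be the following. Write $K=K_1\parallel K_2$, $L=L_1\parallel L_2$, and $P_i\colon A^*\to A_i^*$. Synchronous nonconflictness gives $\overline{K}=\overline{K_1}\parallel\overline{K_2}=P_1^{-1}(\overline{K_1})\cap P_2^{-1}(\overline{K_2})$, exactly the role nonconflictness plays in Lemmas~\ref{feng} and~\ref{fengpo}. I would then take witnesses $s_1,s_2\in\overline{K}$ with $P_o(s_1)=P_o(s_2)$, $a\in A_c$, $s_1a\in\overline{K}$, $s_2a\in L$, and aim to prove $s_2a\in\overline{K}$, equivalently $P_i(s_2a)\in\overline{K_i}$ for $i=1,2$. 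Fix $i$ and split on whether $a\in A_i$. If $a\notin A_i$, then $P_i(s_2a)=P_i(s_2)\in\overline{K_i}$ because $s_2\in\overline{K}$, and there is nothing to do. If $a\in A_i$, then from $s_1,s_2\in\overline{K}$ and $s_1a\in\overline{K}$ one reads off $P_i(s_1),P_i(s_2)\in\overline{K_i}$ and $P_i(s_1)a=P_i(s_1a)\in\overline{K_i}$, from $s_2a\in L$ one reads off $P_i(s_2)a=P_i(s_2a)\in L_i$, and from $P_o(s_1)=P_o(s_2)$ one gets $P_{o,i}(P_i(s_1))=P_{o,i}(P_i(s_2))$; together with $a\in A_i\cap A_c=A_{i,c}$, applying observability of $K_i$ (in the symmetric form, with $P_i(s_1),P_i(s_2)$ in place of $s_1,s_2$) yields $P_i(s_2)a=P_i(s_2a)\in\overline{K_i}$. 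Collecting both indices $i$, we get $s_2a\in P_1^{-1}(\overline{K_1})\cap P_2^{-1}(\overline{K_2})=\overline{K}$, which is the claim.

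I do not expect a genuine obstacle here --- the argument is essentially bookkeeping with projections --- but two points deserve a line of justification. First, that global indistinguishability $P_o(s_1)=P_o(s_2)$ descends componentwise to $P_{o,i}P_i(s_1)=P_{o,i}P_i(s_2)$: this is exactly the observation that, since $A_{o,i}\subseteq A_i$ and $A_{o,i}\subseteq A_o$, the projection onto $A_{o,i}$ factors as $P_{o,i}\circ P_i=Q\circ P_o$ for the projection $Q\colon A_o^*\to A_{o,i}^*$. Second, that an event which is globally controllable and lies in $A_i$ is controllable for $K_i$; this holds under the paper's convention $A_{i,uc}=A_i\cap A_{uc}$, so $A_{i,c}=A_i\cap A_c$. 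Everything else is a direct transcription of the synchronous-product definitions, so the proof ends once both components have been treated.
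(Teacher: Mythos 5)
Your proposal is correct and follows essentially the same route as the paper's proof: pass to the symmetric two-word formulation of observability, use synchronous nonconflictness to write $\overline{K_1\parallel K_2}=\overline{K_1}\parallel\overline{K_2}$, and apply observability of each $K_i$ to the projected words. The paper's version is just terser, leaving implicit the case split on $a\in A_i$, the descent of $P_o(s_1)=P_o(s_2)$ to $P_{o,i}P_i(s_1)=P_{o,i}P_i(s_2)$, and the fact that $A_{i,c}=A_i\cap A_c$ --- all of which you spell out correctly.
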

  \begin{proof}
    Let $s, s'\in A^*$ be such that $P_o(s) = P_o(s')$. Let $a \in A$ and assume that $sa \in \overline{K_1\parallel K_2}$, $s' \in \overline{K_1\parallel K_2}$, and $s'a \in L_1\parallel L_2$. Let $P_{i}\colon A^* \to A_i^*$, for $i=1,2$. Then $P_i(sa)\in \overline{K_i}$, $P_i(s')\in \overline{K_i}$, and $P_i(s'a)\in L_i$ imply that $P_i(s'a)\in \overline{K_i}$, by observability of $K_i$ with respect to $L_i$. Thus, $s'a\in \overline{K_1}\parallel \overline{K_2} = \overline{K_1 \parallel K_2}$.
  \end{proof}

  \begin{lemma}[\cite{pcl06}]\label{fengT41}
    Let $L_i\subseteq A_i^*$, $i\in J$, and $\bigcup_{k,\ell\in J}^{k\neq\ell} (A_k\cap A_\ell)\subseteq A_0$. If $P_{i,0}\colon A_i^* \to (A_i\cap A_0)^*$ is an $L_i$-observer, for $i\in J$, then $\overline{\|_{i\in J} L_i} = \|_{i\in J} \overline{L_i}$ if and only if $\overline{\|_{i\in J} P_{i,0}(L_i)} = \|_{i\in J} \overline{P_{i,0}(L_i)}$. 
  \end{lemma}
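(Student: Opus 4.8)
The plan is to prove each implication directly at the word level, using the $L_i$-observer hypotheses only where they are genuinely needed. Write $A=\bigcup_{i\in J}A_i$, let $P_i\colon A^*\to A_i^*$ and $P_0\colon A^*\to A_0^*$ be the projections, and for each $i$ let $Q_i\colon A_0^*\to(A_i\cap A_0)^*$ be the projection; since every shared event lies in $A_0$, one has $P_{i,0}\circ P_i=Q_i\circ P_0$, the common value being the projection of $A^*$ onto $(A_i\cap A_0)^*$. Two elementary facts are used freely: prefix closure commutes with a projection, so $\overline{P_{i,0}(L_i)}=P_{i,0}(\overline{L_i})$ and $\overline{P_0(M)}=P_0(\overline{M})$; and $\|_{i\in J}\overline{L_i}$ and $\|_{i\in J}\overline{P_{i,0}(L_i)}$ are prefix-closed, so the inclusions $\overline{\|_{i\in J}L_i}\subseteq\|_{i\in J}\overline{L_i}$ and $\overline{\|_{i\in J}P_{i,0}(L_i)}\subseteq\|_{i\in J}\overline{P_{i,0}(L_i)}$ hold automatically and only the reverse inclusions need proof. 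I will also use a routine interleaving lemma: given $w\in A_0^*$ and, for each $i\in J$, a word $u_i\in A_i^*$ with $P_{i,0}(u_i)=Q_i(w)$, there is $v\in A^*$ with $P_0(v)=w$ and $P_i(v)=u_i$ for all $i$. Such a $v$ is built explicitly by scanning $w$ and inserting, right after each emitted letter, the private blocks (over $A_i\setminus A_0$) that $u_i$ places after the corresponding shared letter; no ordering conflict can arise, because the private alphabets $A_i\setminus A_0$ are pairwise disjoint and every non-private letter's position is already fixed by the single word $w$.

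For the forward direction, assume $\overline{\|_{i\in J}L_i}=\|_{i\in J}\overline{L_i}$ and take $v\in\|_{i\in J}\overline{P_{i,0}(L_i)}$. For each $i$, $Q_i(v)\in\overline{P_{i,0}(L_i)}=P_{i,0}(\overline{L_i})$, so there is $t_i\in\overline{L_i}$ with $P_{i,0}(t_i)=Q_i(v)$. The interleaving lemma (with $w=v$) gives $s\in A^*$ with $P_0(s)=v$ and $P_i(s)=t_i$ for all $i$; hence $s\in\|_{i\in J}\overline{L_i}=\overline{\|_{i\in J}L_i}$, so $sv'\in\|_{i\in J}L_i$ for some $v'$. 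Then $P_0(sv')=v\,P_0(v')\in P_0(\|_{i\in J}L_i)$, and the easy inclusion $P_0(\|_{i\in J}L_i)\subseteq\|_{i\in J}P_{i,0}(L_i)$ (immediate from $Q_i(P_0(u))=P_{i,0}(P_i(u))\in P_{i,0}(L_i)$ for every $u\in\|_{i\in J}L_i$) yields $v\in\overline{\|_{i\in J}P_{i,0}(L_i)}$. This direction uses only the nonconflict hypothesis, not the observer property.

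For the reverse direction, assume $\overline{\|_{i\in J}P_{i,0}(L_i)}=\|_{i\in J}\overline{P_{i,0}(L_i)}$ and take $s\in\|_{i\in J}\overline{L_i}$; the goal is $s\in\overline{\|_{i\in J}L_i}$. From $Q_i(P_0(s))=P_{i,0}(P_i(s))\in P_{i,0}(\overline{L_i})=\overline{P_{i,0}(L_i)}$ we get $P_0(s)\in\|_{i\in J}\overline{P_{i,0}(L_i)}=\overline{\|_{i\in J}P_{i,0}(L_i)}$, so $P_0(s)\,w\in\|_{i\in J}P_{i,0}(L_i)$ for some $w\in A_0^*$. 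Fix $i$: then $P_{i,0}(P_i(s))\,Q_i(w)\in P_{i,0}(L_i)$ with $P_i(s)\in\overline{L_i}$, so the $L_i$-observer property of $P_{i,0}$ supplies $u_i\in A_i^*$ with $P_i(s)u_i\in L_i$ and $P_{i,0}(u_i)=Q_i(w)$. Applying the interleaving lemma to $w$ and the $u_i$ produces $v\in A^*$ with $P_i(v)=u_i$ for every $i$; then $P_i(sv)=P_i(s)u_i\in L_i$ for all $i$, i.e. $sv\in\|_{i\in J}L_i$, so $s\in\overline{\|_{i\in J}L_i}$.

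The main obstacle is the reverse direction, and within it the coordination between the observer property and the interleaving step: the $L_i$-observer hypothesis is exactly what lets one promote the ``phantom'' continuation $Q_i(w)$ --- a legal continuation seen only on the shared events --- to an honest continuation $u_i$ inside $L_i$ that extends $P_i(s)$, and it is crucial that all the $u_i$ are pinned to the single word $w$ on their shared events, since for $|J|\ge 3$ mere pairwise consistency of the $u_i$ would not in general allow merging them into one word. The only genuinely fiddly bookkeeping is in the interleaving lemma; everything else reduces to the two elementary facts stated above together with the nonconflict hypotheses.
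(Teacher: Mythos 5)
Your proof is correct. Note, though, that the paper does not prove this statement at all: Lemma~\ref{fengT41} is imported verbatim from Pena et al.~\cite{pcl06}, so there is no in-paper argument to compare against, and what you have supplied is a self-contained replacement for that citation. Your argument is the natural word-level one and it checks out: the identities $\overline{P(L)}=P(\overline{L})$ and $P_{i,0}\circ P_i=Q_i\circ P_0$ are used correctly, the interleaving lemma is sound precisely because all shared events lie in $A_0$ so the residual alphabets $A_i\setminus A_0$ are pairwise disjoint (making the relative order of distinct components' private blocks irrelevant to every $P_i$ and to $P_0$), and the observer hypothesis is invoked exactly where it is indispensable --- to lift the abstract continuation $Q_i(w)$ of $P_i(s)$ to a concrete continuation $u_i$ with $P_i(s)u_i\in L_i$. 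Your observation that the forward implication needs only the nonconflict hypothesis and not the observer property is also correct and reflects the known asymmetry of this result: a projection retaining all shared events always \emph{preserves} nonconflict, and the observer property is what makes it \emph{reflect} nonconflict. The only cosmetic caveat is that $\|_{i\in J}P_{i,0}(L_i)$ lives over $\bigcup_{i\in J}(A_i\cap A_0)$ rather than over all of $A_0$, so $Q_i$ should strictly be read as the projection restricted to that alphabet; since every word you apply it to already lies in $\bigl(\bigcup_{i\in J}(A_i\cap A_0)\bigr)^*$, this changes nothing.
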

  
  \begin{lemma}[\cite{Won12}]\label{lemma:Wonham}
    Let $P_k \colon A^* \to A_k^*$ be a projection, and let $L_i\subseteq A_i^*$, where $A_i \subseteq A$, for $i=1,2$, and $A_1\cap A_2 \subseteq A_k$. Then $P_k(L_1\| L_2)=P_k(L_1) \| P_k(L_2)$.
  \end{lemma}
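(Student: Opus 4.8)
I would prove the two inclusions separately; only $\supseteq$ uses the hypothesis $A_1\cap A_2\subseteq A_k$. For $P_k(L_1\|L_2)\subseteq P_k(L_1)\|P_k(L_2)$ — which in fact holds for an arbitrary projection — take $w\in L_1\|L_2$ and verify the two membership conditions for the right-hand side. For $i=1,2$, since $A_i\cap A_k\subseteq A_k$ we have $P_{A_i\cap A_k}(P_k(w))=P_{A_i\cap A_k}(w)$, and since $A_i\cap A_k\subseteq A_i$ we have $P_{A_i\cap A_k}(w)=P_{A_i\cap A_k}(P_{A_i}(w))\in P_{A_i\cap A_k}(L_i)$; finally $P_{A_i\cap A_k}(L_i)=P_k(L_i)$ because on $A_i^*$ the projection onto $A_k$ coincides with the projection onto $A_i\cap A_k$. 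Hence $P_k(w)\in P_k(L_1)\|P_k(L_2)$.

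For the reverse inclusion I would exploit associativity of $\|$ together with the elementary fact that for languages $M$ over $A_M$ and $N$ over $A_N$ one has $M\|N\neq\emptyset$ iff $P_{A_M\cap A_N}(M)\cap P_{A_M\cap A_N}(N)\neq\emptyset$ (the ``if'' direction being a standard interleaving of witnesses). Fix $v\in P_k(L_1)\|P_k(L_2)$; here $P_k(L_1)$ is over $A_k\cap A_1$ and $P_k(L_2)$ over $A_k\cap A_2$, so $v$ is a word over $\hat B:=A_k\cap(A_1\cup A_2)$ with $P_{A_k\cap A_1}(v)\in P_k(L_1)$ and $P_{A_k\cap A_2}(v)\in P_k(L_2)$. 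Viewing $\{v\}$ as a language over $\hat B$, I claim $\{v\}\|L_1\|L_2\neq\emptyset$. This suffices: any $w$ in the product lies in $(A_1\cup A_2)^*$ with $P_{A_1}(w)\in L_1$ and $P_{A_2}(w)\in L_2$, so $w\in L_1\|L_2$, and it satisfies $P_{\hat B}(w)=v$; since $w$ has no letter outside $A_1\cup A_2$, this means $P_k(w)=v$, whence $v\in P_k(L_1\|L_2)$. To obtain nonemptiness I bracket the product as $(\{v\}\|L_1)\|L_2$. The common alphabet of $\{v\}$ and $L_1$ is $\hat B\cap A_1=A_k\cap A_1$, and $P_{A_k\cap A_1}(v)\in P_k(L_1)=P_{A_k\cap A_1}(L_1)$, so $\{v\}\|L_1\neq\emptyset$. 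Its alphabet is $C_1:=\hat B\cup A_1=A_1\cup(A_k\cap A_2)$, and since every $x\in\{v\}\|L_1$ has $P_{\hat B}(x)=v$ while $A_k\cap A_2\subseteq\hat B$, we get $P_{A_k\cap A_2}(\{v\}\|L_1)=\{P_{A_k\cap A_2}(v)\}$. Consequently $(\{v\}\|L_1)\|L_2\neq\emptyset$ iff $P_{A_k\cap A_2}(v)\in P_{A_k\cap A_2}(L_2)=P_k(L_2)$, which holds by the choice of $v$.

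\textbf{Main obstacle.} The one place the hypothesis genuinely enters — and the step that must be gotten exactly right — is the computation of the common alphabet of $\{v\}\|L_1$ and $L_2$: it equals $C_1\cap A_2=(A_1\cap A_2)\cup(A_k\cap A_2)$, which collapses to $A_k\cap A_2$ precisely because $A_1\cap A_2\subseteq A_k$. Were this to fail, $\{v\}\|L_1$ could share an event of $A_1\setminus A_k$ with $L_2$, the set $P_{A_k\cap A_2}(\{v\}\|L_1)$ would no longer be forced to be the singleton $\{P_{A_k\cap A_2}(v)\}$, and both the argument and the lemma would fail (the inclusion $\supseteq$ is false without the hypothesis). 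The remaining ingredients — associativity of $\|$, the nonemptiness criterion via the common alphabet, and the identity $P_B\circ P_C=P_B$ for $B\subseteq C$ — are routine and would be cited or dispatched in a line each; the only real work is the alphabet bookkeeping just described.
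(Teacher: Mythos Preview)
The paper does not actually prove this lemma; it merely cites it from Wonham's textbook~\cite{Won12} as an auxiliary result, so there is no in-paper argument to compare against. Your proof is correct and complete: the forward inclusion is the routine part, and your treatment of the reverse inclusion via the nonemptiness criterion for $\{v\}\|L_1\|L_2$ is sound, with the alphabet computation $C_1\cap A_2=(A_1\cap A_2)\cup(A_k\cap A_2)=A_k\cap A_2$ correctly isolated as the single place where the hypothesis $A_1\cap A_2\subseteq A_k$ is used.
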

  
  \begin{lemma}[\cite{KMvS12}]\label{lem_trans}
    Let $K \subseteq L \subseteq M$ be such that $K$ is controllable with respect to $\overline{L}$ and $L$ is controllable with respect to $\overline{M}$. Then $K$ is controllable with respect to $\overline{M}$.
  \end{lemma}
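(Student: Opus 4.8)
The plan is a direct set-theoretic chase from the definition of controllability, namely that a language $N\subseteq L'$ is controllable with respect to a prefix-closed reference language $L'$ and $A_{uc}$ exactly when $\overline{N}A_{uc}\cap L'\subseteq\overline{N}$. First I would unwind the three relevant instances of this definition: controllability of $K$ with respect to $\overline{L}$ is the inclusion $\overline{K}A_{uc}\cap\overline{L}\subseteq\overline{K}$; controllability of $L$ with respect to $\overline{M}$ is $\overline{L}A_{uc}\cap\overline{M}\subseteq\overline{L}$; and the goal, controllability of $K$ with respect to $\overline{M}$, is $\overline{K}A_{uc}\cap\overline{M}\subseteq\overline{K}$. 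I would also observe that the hypotheses $K\subseteq L\subseteq M$ yield $\overline{K}\subseteq\overline{L}\subseteq\overline{M}$, so each of the three controllability statements is well-formed (the candidate sublanguage is contained in its reference language).

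To establish the goal I would take an arbitrary word $sa$ with $s\in\overline{K}$, $a\in A_{uc}$, and $sa\in\overline{M}$, and show $sa\in\overline{K}$. Since $\overline{K}\subseteq\overline{L}$, we have $s\in\overline{L}$, whence $sa\in\overline{L}A_{uc}\cap\overline{M}$; controllability of $L$ with respect to $\overline{M}$ then forces $sa\in\overline{L}$. Consequently $sa\in\overline{K}A_{uc}\cap\overline{L}$, and controllability of $K$ with respect to $\overline{L}$ gives $sa\in\overline{K}$, as desired. This proves $\overline{K}A_{uc}\cap\overline{M}\subseteq\overline{K}$, i.e., $K$ is controllable with respect to $\overline{M}$.

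There is no genuine obstacle here; the one point needing care is the order of the two applications. One must first use the outer controllability (of $L$ with respect to $\overline{M}$) to push $sa$ into the intermediate language $\overline{L}$, and only afterwards apply the inner controllability (of $K$ with respect to $\overline{L}$); the reverse order does not apply, since at the outset $sa$ is known only to lie in $\overline{M}$, not in $\overline{L}$.
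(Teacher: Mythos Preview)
Your proof is correct and is the standard direct argument from the definition of controllability. The paper itself does not prove this lemma but merely quotes it from~\cite{KMvS12}, so there is no alternative approach to compare against; your two-step chase (first apply controllability of $L$ with respect to $\overline{M}$ to get $sa\in\overline{L}$, then controllability of $K$ with respect to $\overline{L}$ to get $sa\in\overline{K}$) is exactly how one establishes transitivity of controllability.
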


\subsection*{Acknowledgements}
  The authors are grateful to the anonymous reviewers, whose comments and suggestions significantly improved the paper.
  The work was supported by RVO 67985840, by GA{\v C}R in project GA15-02532S and by the German Research Foundation (DFG) in Emmy Noether grant KR~4381/1-1 (DIAMOND).

\bibliographystyle{plain}
\bibliography{most_extension}

\begin{thebibliography}{10}

\bibitem{BL00}
G.~Barrett and S.~Lafortune.
\newblock Decentralized supervisory control with communicating controllers.
\newblock {\em IEEE Transactions on Automatic Control}, 45(9):1620--1638, 2000.

\bibitem{brandt}
R.~D. Brandt, V.~Garg, R.~Kumar, F.~Lin, S.~I. Marcus, and W.~M. Wonham.
\newblock Formulas for calculating supremal controllable and normal
  sublanguages.
\newblock {\em Systems \& Control Letters}, 15(2):111--117, 1990.

\bibitem{pcl12}
H.J. Bravo, A.E.C. {Da Cunha}, P.N. Pena, R.~Malik, and J.E.R. Cury.
\newblock Generalised verification of the observer property in discrete event
  systems.
\newblock In {\em WODES}, pages 337--342, Mexico, 2012.

\bibitem{CaiZW15}
Kai Cai, Renyuan Zhang, and W.~Murray Wonham.
\newblock Relative observability of discrete-event systems and its supremal
  sublanguages.
\newblock {\em IEEE Transactions on Automatic Control}, 60(3):659--670, 2015.

\bibitem{CL08}
C.G. Cassandras and S.~Lafortune.
\newblock {\em Introduction to discrete event systems}.
\newblock Springer, second edition, 2008.

\bibitem{CK11}
H.~Chakib and A.~Khoumsi.
\newblock Multi-decision supervisory control: Parallel decentralized
  architectures cooperating for controlling discrete event systems.
\newblock {\em IEEE Transactions on Automatic Control}, 56(11):2608--2622,
  2011.

\bibitem{FA199311}
Jinghuai Fa, Xiaojun Yang, and Yingping Zheng.
\newblock Formulas for a class of controllable and observable sublanguages
  larger than the supremal controllable and normal sublanguage.
\newblock {\em Systems \& Control Letters}, 20(1):11--18, 1993.

\bibitem{FLT}
L.~Feng.
\newblock {\em Computationally Efficient Supervisor Design for Discrete-Event
  Systems}.
\newblock PhD thesis, University of Toronto, Ontario, Canada, 2007.

\bibitem{FengWonham}
L.~Feng and W.M. Wonham.
\newblock On the computation of natural observers in discrete-event systems.
\newblock {\em Discrete Event Dynamic Systems}, 20(1):63--102, 2010.

\bibitem{JKumar00}
S.~Jiang and R.~Kumar.
\newblock Decentralized control of discrete event systems with specializations
  to local control and concurrent systems.
\newblock {\em IEEE Transactions on Systems, Man, and Cybernetics---Part B:
  Cybernetics}, 30(5):653--660, 2000.

\bibitem{JiangKG03}
S.~Jiang, R.~Kumar, and H.~E. Garcia.
\newblock Optimal sensor selection for discrete-event systems with partial
  observation.
\newblock {\em IEEE Transactions on Automatic Control}, 48(3):369--381, 2003.

\bibitem{KMP06}
J.~Komenda, H.~Marchand, and S.~Pinchinat.
\newblock A constructive and modular approach to decentralized supervisory
  control problems.
\newblock {\em IFAC Proceedings Volumes}, 39(17):111--116, 2006.
\newblock 3rd {IFAC} Workshop on Discrete-Event System Design.

\bibitem{allerton2013}
J.~Komenda and T.~Masopust.
\newblock A bridge between decentralized and coordination control.
\newblock In {\em Allerton Conference on Communication, Control, and
  Computing}, pages 966--972, 2013.

\bibitem{scl2011}
J.~Komenda, T.~Masopust, and J.H. van Schuppen.
\newblock Synthesis of controllable and normal sublanguages for discrete-event
  systems using a coordinator.
\newblock {\em Systems \& Control Letters}, 60(7):492--502, 2011.

\bibitem{KMvS12cd}
J.~Komenda, T.~Masopust, and J.H. van Schuppen.
\newblock On conditional decomposability.
\newblock {\em Systems \& Control Letters}, 61(12):1260--1268, 2012.

\bibitem{KMvS12}
J.~Komenda, T.~Masopust, and J.H. van Schuppen.
\newblock Supervisory control synthesis of discrete-event systems using a
  coordination scheme.
\newblock {\em Automatica}, 48(2):247--254, 2012.

\bibitem{KMvS13JDEDS}
J.~Komenda, T.~Masopust, and J.H. van Schuppen.
\newblock Coordination control of discrete-event systems revisited.
\newblock {\em Discrete Event Dynamic Systems}, 25(1-2):65--94, 2015.

\bibitem{KomendaMS14a}
J.~Komenda, T.~Masopust, and J.H. van Schuppen.
\newblock Relative observability in coordination control.
\newblock In {\em {IEEE} International Conference on Automation Science and
  Engineering ({CASE})}, pages 75--80, 2015.

\bibitem{Komenda200597}
Jan Komenda and Jan~H. van Schuppen.
\newblock Modular antipermissive control of discrete-event systems.
\newblock {\em {IFAC} World Congress}, 38(1):97--102, 2005.

\bibitem{KomendaS08}
Jan Komenda and Jan~H. van Schuppen.
\newblock Modular control of discrete-event systems with coalgebra.
\newblock {\em IEEE Transactions on Automatic Control}, 53(2):447--460, 2008.

\bibitem{KozakW95}
P.~Koz{\' a}k and W.~M. Wonham.
\newblock Fully decentralized solutions of supervisory control problems.
\newblock {\em IEEE Transactions on Automatic Control}, 40(12):2094--2097,
  1995.

\bibitem{Kozen77}
D.~Kozen.
\newblock Lower bounds for natural proof systems.
\newblock In {\em FOCS}, pages 254--266, 1977.

\bibitem{KT07}
R.~Kumar and S.~Takai.
\newblock Inference-based ambiguity management in decentralized
  decision-making: Decentralized control of discrete event systems.
\newblock {\em IEEE Transactions on Automatic Control}, 52(10):1783--1794,
  2007.

\bibitem{Kumar1995OSC}
Ratnesh Kumar and Vijay~K. Garg.
\newblock Optimal supervisory control of discrete event dynamical systems.
\newblock {\em SIAM Journal on Control and Optimization}, 33(2):419--439, 1995.

\bibitem{LafortuneChen1990}
S.~Lafortune and E.~Chen.
\newblock The infimal closed controllable superlanguage and its application in
  supervisory control.
\newblock {\em IEEE Transactions on Automatic Control}, 35(4):398--405, 1990.

\bibitem{LW02}
S.-H. Lee and K.C. Wong.
\newblock Structural decentralized control of concurrent discrete-event
  systems.
\newblock {\em European Journal of Control}, 8(5):477--491, 2002.

\bibitem{LinSSWS14}
L.~Lin, A.~Stefanescu, R.~Su, W.~Wang, and A.R. Shehabinia.
\newblock Towards decentralized synthesis: Decomposable sublanguage and joint
  observability problems.
\newblock In {\em American Control Conference}, pages 2047--2052, 2014.

\bibitem{Malikwodes2016}
R.~Malik.
\newblock Programming a fast explicit conflict checker.
\newblock In {\em WODES}, pages 438--443, 2016.

\bibitem{pcl06}
P.N. Pena, J.E.R. Cury, and S.~Lafortune.
\newblock Verification of nonconflict of supervisors using abstractions.
\newblock {\em IEEE Transactions on Automatic Control}, 54(12):2803--2815,
  2009.

\bibitem{pena2010}
P.N. Pena, J.E.R. Cury, R.~Malik, and S.~Lafortune.
\newblock Efficient computation of observer projections using {OP}-verifiers.
\newblock In {\em WODES}, pages 416--421, 2010.

\bibitem{RW87}
P.J. Ramadge and W.M. Wonham.
\newblock Supervisory control of a class of discrete event processes.
\newblock {\em SIAM Journal on Control and Optimization}, 25(1):206--230, 1987.

\bibitem{RickerR07}
S.~L. Ricker and K.~Rudie.
\newblock Knowledge is a terrible thing to waste: Using inference in
  discrete-event control problems.
\newblock {\em IEEE Transactions on Automatic Control}, 52(3):428--441, 2007.

\bibitem{RR00}
S.L. Ricker and K.~Rudie.
\newblock Know means no: Incorporating knowledge into discrete-event control
  systems.
\newblock {\em IEEE Transactions on Automatic Control}, 45(9):1656--1668, 2000.

\bibitem{RohloffKK06}
K.~Rohloff, S.~Khuller, and G.~Kortsarz.
\newblock Approximating the minimal sensor selection for supervisory control.
\newblock {\em Discrete Event Dynamic Systems}, 16(1):143--170, 2006.

\bibitem{rohloff}
K.~Rohloff and S.~Lafortune.
\newblock {PSPACE}-completeness of modular su\-per\-vi\-so\-ry control
  problems.
\newblock {\em Discrete Event Dynamic Systems}, 15:145--167, 2005.

\bibitem{RohloffYL03}
K.~Rohloff, T.{-}S. Yoo, and S.~Lafortune.
\newblock Deciding co-observability is {PSPACE}-complete.
\newblock {\em IEEE Transactions on Automatic Control}, 48(11):1995--1999,
  2003.

\bibitem{RudieLL03}
K.~Rudie, S.~Lafortune, and F.~Lin.
\newblock Minimal communication in a distributed discrete-event system.
\newblock {\em IEEE Transactions on Automatic Control}, 48(6):957--975, 2003.

\bibitem{RudieW90}
K.~Rudie and W.~M. Wonham.
\newblock Supervisory control of communicating processes.
\newblock In {\em Protocol Specification, Testing and Verification X}, pages
  243--257, 1990.

\bibitem{RW92}
K.~Rudie and W.M. Wonham.
\newblock Think globally, act locally: Decentralized supervisory control.
\newblock {\em IEEE Transactions on Automatic Control}, 37(11):1692--1708,
  1992.

\bibitem{Rudie1990IPO}
Karen Rudie and W.~Murray Wonham.
\newblock The infimal prefix-closed and observable superlanguage of given
  language.
\newblock {\em Systems \& Control Letters}, 15(5):361--371, 1990.

\bibitem{SB08}
K.~Schmidt and C.~Breindl.
\newblock On maximal permissiveness of hierarchical and modular supervisory
  control approaches for discrete event systems.
\newblock In {\em WODES}, pages 462--467, 2008.

\bibitem{SB11}
K.~Schmidt and C.~Breindl.
\newblock Maximally permissive hierarchical control of decentralized discrete
  event systems.
\newblock {\em IEEE Transactions on Automatic Control}, 56(4):723--737, 2011.

\bibitem{Takai98}
S.~Takai.
\newblock On the language generated under fully decentralized supervision.
\newblock {\em IEEE Transactions on Automatic Control}, 43(9):1253--1256, 1998.

\bibitem{TKU08}
S.~Takai and R.~Kumar.
\newblock Synthesis of inference-based decentralized control for discrete event
  systems.
\newblock {\em IEEE Transactions on Automatic Control}, 53(2):522--534, 2008.

\bibitem{TakaiU03}
Shigemasa Takai and Toshimitsu Ushio.
\newblock Effective computation of an {$L_{m}(G)$}-closed, controllable, and
  observable sublanguage arising in supervisory control.
\newblock {\em Systems \& Control Letters}, 49(3):191--200, 2003.

\bibitem{Th05}
J.G. Thistle.
\newblock Undecidability in decentralized supervision.
\newblock {\em Systems \& Control Letters}, 54(5):503--509, 2005.

\bibitem{Tr04}
S.~Tripakis.
\newblock Undecidable problems of decentralized observation and control on
  regular languages.
\newblock {\em Information Processing Letters}, 90(1):21--28, 2004.

\bibitem{WangGLL11}
W.~Wang, A.~R. Girard, S.~Lafortune, and F.~Lin.
\newblock On codiagnosability and coobservability with dynamic observations.
\newblock {\em IEEE Transactions on Automatic Control}, 56(7):1551--1566, 2011.

\bibitem{WangLL08}
W.~Wang, S.~Lafortune, and F.~Lin.
\newblock Optimal sensor activation in controlled discrete event systems.
\newblock In {\em Conference on Decision and Control}, pages 877--882. {IEEE},
  2008.

\bibitem{WH1991}
Y.~Willner and M.~Heymann.
\newblock Supervisory control of concurrent discrete-event systems.
\newblock {\em International Journal of Control}, 54(5):1143--1169, 1991.

\bibitem{wong98}
K.~Wong.
\newblock On the complexity of projections of discrete-event systems.
\newblock In {\em WODES}, pages 201--206, 1998.

\bibitem{Won12}
W.M. Wonham.
\newblock Supervisory control of discrete-event systems. {U}niversity of
  {T}oronto, 2012.
\newblock Available at http://www.control.utoronto.ca/DES/.

\bibitem{YinL16}
X.~Yin and S.~Lafortune.
\newblock Synthesis of maximally permissive supervisors for partially-observed
  discrete-event systems.
\newblock {\em IEEE Transactions on Automatic Control}, 61(5):1239--1254, 2016.

\bibitem{YLL02}
T.S. Yoo and S.~Lafortune.
\newblock A general architecture for decentralized supervisory control of
  discrete-event systems.
\newblock {\em Discrete Event Dynamic Systems}, 12(3):335--377, 2002.

\bibitem{YL04}
T.S. Yoo and S.~Lafortune.
\newblock Decentralized supervisory control with conditional decisions:
  Supervisor existence.
\newblock {\em IEEE Transactions on Automatic Control}, 49(11):1886--1904,
  2004.

\bibitem{WZ91}
H.~Zhong and W.~M. Wonham.
\newblock On the consistency of hierarchical supervision in discrete-event
  systems.
\newblock {\em IEEE Transactions on Automatic Control}, 35(10):1125--1134,
  1990.

\end{thebibliography}

\end{document}